\documentclass[paper]{imsart}
\usepackage{appendix}
\usepackage{mathtools}
\usepackage[T1]{fontenc}
\usepackage{amsfonts}
\usepackage{amsmath}
\usepackage{amssymb}
\usepackage{amsthm}
\usepackage{thmtools}
\usepackage{bbm}
\usepackage{comment}
\usepackage{bm}
\usepackage{mathrsfs}
\usepackage{color}
\usepackage{pdfsync}

\usepackage{enumitem}
\usepackage{booktabs,threeparttable,tabularx,siunitx}
\usepackage{subcaption}
\usepackage{amsfonts,bbm,bm}

\newcommand{\cB}{\mathcal{B}}
\newcommand{\cC}{\mathcal{C}}

\newcommand{\cL}{\mathcal{L}}
\newcommand{\cM}{\mathcal{M}}

\newcommand{\cT}{\mathcal{T}}

\newcommand{\cW}{\mathcal{W}}

\newcommand{\N}{\mathbb{N}} 
\newcommand{\BB}{\mathbb{B}}

\newcommand{\vae}{\varepsilon}

\newcommand{\rom}[1]{%
  \textup{\uppercase\expandafter{\romannumeral#1}}%
}

\newcommand{\inner}[2]{\langle #1,#2\rangle}

\definecolor{darkgreen}{RGB}{0,100,0}

\newcommand*{\rd}{\mathrm{d}}
\newcommand*{\dd}{\, \rd}

\newcommand{\bigslant}[2]{{\raisebox{.2em}{$#1$}\left/\raisebox{-.2em}{$#2$}\right.}}

\usepackage{natbib}

\usepackage{xcolor} 
\definecolor{darkgreen}{rgb}{0,0.5,0} 
\definecolor{darkbkue}{rgb}{1,0,0} 

\makeatletter
\renewcommand\@makefnmark{%
  \hbox{\textsuperscript{\textcolor{darkgreen}{\@thefnmark}}}}
\makeatother
\usepackage{tikz}
\usetikzlibrary{patterns}

\DeclareMathOperator*{\diam}{diam}

\newcommand{\RR}{\mathbb{R}}

\newcommand{\R}{\RR}

\newcommand{\E}{\mathbb{E}}
\newcommand{\PP}{\mathbb{P}}
\newcommand{\NN}{\mathbb{N}}

\newcommand{\eps}{ \varepsilon}
\newcommand{\diff}{\mathrm{d}}

\newcommand{\cP}{\mathcal{P}}

\newcommand{\med}{\operatorname{med}}

\newcommand{\mykill}[1]{}

\usepackage[capitalize, nameinlink]{cleveref}

\newlist{hypotheses}{enumerate}{1}
\setlist[hypotheses]{
    label=\textcolor{darkgreen}{(H\arabic*)},
    ref=(H\arabic*)
}
\theoremstyle{plain}
\newtheorem{theorem}{Theorem}[section]
\newtheorem{proposition}[theorem]{Proposition}
\newtheorem{lemma}[theorem]{Lemma}
\newtheorem{corollary}[theorem]{Corollary}
\theoremstyle{definition}
\newtheorem{remark}[theorem]{Remark}

\newtheorem{assumption}[theorem]{Assumption}

\newlist{myenum}{enumerate}{3}
\setlist[myenum,1]{label={\rm (H\arabic*)},
                   ref  ={\rm (H\arabic*)}}
\crefname{myenumi}{property}{properties}

\begin{document}

\begin{frontmatter}

\title{Empirical optimal transport potentials: fast rates\\ and a functional central limit theorem}
\runtitle{Empirical optimal transport potentials}

\begin{aug}
\author[A]{\fnms{Alberto}~\snm{Gonz\'{a}lez-Sanz}\ead[label=e1]{ag4855@columbia.edu}}
\author[B]{\fnms{Gilles}~\snm{Mordant}\ead[label=e2]{gilles.mordant@yale.edu}}
\author[A]{\fnms{Shunan}~\snm{Sheng}\ead[label=e3]{ss6574@columbia.edu}}

\address[A]{Department of Statistics,
Columbia University,
\printead[presep={\\}]{e1,e3}}

\address[B]{Department of Applied and Computational Mathematics,
Yale University,
\printead[presep={\\}]{e2}}
\end{aug}

\begin{abstract}
Optimal transport potentials are fundamental objects in statistics, economics, and machine learning: their gradients generate optimal transport maps, while the potentials themselves act as location-dependent dual prices and sensitivity variables. We study the estimation of the quadratic optimal transport potential when a fixed absolutely continuous reference distribution $\mu$ is transported to an unknown distribution $\nu$, accessed to via its empirical measure. Our main ingredient is a stability inequality that controls the $L^1(\mu)$ distance, modulo additive constants, between a strongly convex potential $\varphi$ and a convex potential $\widetilde\varphi$ by a weak dual norm of $(\nabla\widetilde\varphi)_\#\mu-(\nabla\varphi)_\#\mu,$
together with a second-order Wasserstein remainder of logarithmic type. This separation between the leading empirical-process term and the Wasserstein remainder yields faster convergence for potentials than for the corresponding transport maps.

Under smoothness and uniform convexity assumptions, the exact semidiscrete Brenier potential converges in $L^1(\mu)$ at rate $n^{-1/2}$ for $d\leq3$, at rate $n^{-1/2}(\log n)^{5/2}$ for $d=4$, and at rate
$n^{-2/d}(\log n)^{(d+2)/d}$ for $d\geq5$. The polynomial exponents are sharp. In dimensions $d\leq3$, we further establish a nondegenerate function-space central limit theorem and prove consistency of the nonparametric bootstrap. These results yield joint root-$n$ inference for every fixed finite collection of normalization-invariant weighted contrasts of the potential, including regional shadow premia in reference-based risk problems. Finally, we prove matching upper and lower bounds of order
$\varepsilon\log(1/\varepsilon)$ for the normalization-invariant sum of the entropic dual potentials.
\end{abstract}

\begin{keyword}[class=MSC2020]
\kwd{62G35}
\kwd{62G30}
\end{keyword}

\begin{keyword}
\kwd{Optimal transport}
\kwd{empirical Brenier potential}
\kwd{functional central limit theorem}
\kwd{semidiscrete transport}
\kwd{convex approximation}
\end{keyword}

\end{frontmatter}

\section{Introduction}

Optimal transport (OT) has become a common framework for comparing,
matching, and transforming probability distributions across statistics,
machine learning, economics, and nonlinear partial differential equations
\citep{PanaretosZemel2019,peyre2019computational,Galichon2016,
BenamouBrenier1998}.  Its dual potentials play a central role: for
quadratic transport, their gradients generate the optimal assignment,
while the potentials themselves record the local value of the marginal
constraints.  They therefore retain information that is lost when OT is
reduced to a single distance or cost.

Turning to applications, potential-level quantities arise as integrated multivariate quantiles and
risk or inequality indices
\citep{EkelandGalichonHenry2012,BercuBigotThurin2024,HallinMordant2025},
equilibrium payoff and price schedules
\citep{ChiapporiMcCannNesheim2010,Galichon2016,GalichonSalanie2022},
and learned convex functions in generative modeling
\citep{MakkuvaTaghvaeiOhLee2020,PatyDAspremontCuturi2020}.  Physical
examples include semi-geostrophic geopotentials
\citep{BenamouBrenier1998,BourneEganPelloniWilkinson2022} and
Monge--Amp\`ere--Kantorovich reconstruction of the primordial
gravitational potential from galaxy catalogues
\citep{LevyMohayaeeVonHausegger2021,
NikakhtarPadmanabhanLevyShethMohayaee2023}, see below for a more detailed exposition. 

In these domains, one distribution is often fixed while the other is learned
from data.  In practice, the potentials are typically computed after
replacing the unknown distribution by an empirical or other plug-in
estimate, often with entropic regularization.  This raises the following
statistical question:
\begin{center}
\emph{How well can we estimate the optimal transport potential as a function
of the sample size?}
\end{center}

We now introduce the setting.  We consider quadratic transport from a fixed
absolutely continuous reference distribution $\mu$ to an unknown target
distribution $\nu$.
By Brenier's theorem \citep{Brenier1991}, the optimal transport map is the
gradient of a convex potential.  Thus, writing
$\varphi:=\varphi_\nu$, $(\nabla\varphi)_\#\mu=\nu,$ 
with $\varphi$ unique up to an additive constant.  In the empirical setting,
let $X_1,\ldots,X_n$ be i.i.d.\ with law $\nu$, and write $\widehat\nu_n=\frac1n\sum_{i=1}^n\delta_{X_i}.$ 
Our estimator is the exact unregularized semidiscrete potential
$\widehat\varphi_n$ satisfying
$(\nabla\widehat\varphi_n)_\#\mu=\widehat\nu_n$.  The question above asks
for the rate and limiting distribution of
$\widehat\varphi_n-\varphi$ as a random function modulo constants.

A bound obtained indirectly from existing results on the transport map does
not reveal the correct scale.  Combining map stability with Poincar\'e's
inequality gives only
\[
\E\inf_a\|\widehat\varphi_n-\varphi-a\|_{L^1(\mu)}
\lesssim \E W_2(\widehat\nu_n,\nu),
\]
which is essentially $n^{-1/d}$ in high dimension.  Whether the potential
itself can be estimated faster is therefore not answered by the available
map bounds.

\subsection{Contributions}

Our first result is a stability inequality, which is the main building block
for the subsequent contributions.

\vspace{10pt}
\noindent
\textbf{Informal theorem (\cref{theorem:main})}.
For a strongly convex $\mathcal{C}^{3,\alpha}$ function $\varphi$, assume
that $\mu$ is regular enough.  Then there exists a constant $C>0$ such that,
for every lower semicontinuous convex function $\widetilde\varphi$,
\begin{equation}
     \| \varphi - \widetilde \varphi- \med(\varphi - \widetilde \varphi) \|_{L^{1}(\mu)} \leq C\  \mathfrak{d}(\widetilde{\nu},\nu), 
\end{equation}
where $\nu=(\nabla\varphi)_\#\mu$ and
$\widetilde\nu=(\nabla\widetilde\varphi)_\#\mu$.  Here $\med$ is the
$L^1(\mu)$-median and $\mathfrak d$, made explicit in
\cref{theorem:main}, is the sum of two terms: the norm of
$\widetilde\nu-\nu$ in the dual of $\cC^{1,\mathrm{LogLip}}$, and a term
of order $\cW_2^2(\widetilde\nu,\nu)$ up to a logarithmic factor.
\medskip

The two terms play different roles.  The first is a weak dual norm and is
the leading term.  The Wasserstein distance, which controls the transport
maps, appears only through a second-order remainder.  This is why the
potential can be estimated faster than the map.

This stability inequality yields our main statistical result.

\vspace{10pt}
\noindent
\textbf{Informal theorem (\Cref{Thm:rates-and-CLT})}.
 Let $\varphi$ and $\mu$ be as in \cref{theorem:main}.  Let
 $\widehat{\varphi}_n$ be a convex potential such that
 $\nabla\widehat{\varphi}_n$ pushes $\mu$ forward to the empirical measure
 $\widehat{\nu}_n$.  Then
    \begin{enumerate}
    \item $ \E\big[ \inf_a\|  \widehat{\varphi}_n- \varphi-a \|_{L^{1}(\mu)}\big]  \lesssim  \begin{cases}
    n^{-\frac{1}{2}}  & \text{if } d=1,2,3,\\
         n^{-\frac{1}{2}} (\log(n))^{\frac{5}{2}}  & \text{if } d=4,\\
       n^{-2/d}\left( \log (n)\right) ^{(d+2)/d}  & \text{if } d\geq 5.
\end{cases}$
     \item  For $d=1,2,3$,   $p>\max\{d, \frac{2d}{4-d}\}$ and
     $\gamma_p=1-\frac{d}{p}$,
    $\sqrt{n} ( \widehat{\varphi}_n -\varphi )$ converges to a
    nondegenerate centered Gaussian random element in
    $L^{p'}(\Omega)/\langle 1\rangle$, where $p'$ is the conjugate exponent
    of $p$.
    \end{enumerate}
\medskip

The polynomial exponents in the first conclusion are sharp.  For
$d\leq3$, the nondegenerate functional central limit theorem rules out
$o(n^{-1/2})$ convergence.  For $d\geq4$, the empirical potential is
polyhedral with at most $n$ active affine pieces, and our approximation
lower bound shows that a strongly convex potential cannot be approximated
in $L^1$ by such functions faster than $n^{-2/d}$.  Thus, only logarithmic
gaps remain.

Let us compare this conclusion with the rate obtained from the standard Poincaré argument: Poincaré's inequality and the stability estimate for optimal transport maps \cite[Theorem~6]{Manole.et.al.AoS.2024}---note that $\varphi$ is uniformly convex and $\cC^{3,\alpha}$---give
\[
    \E\big[\inf_{a\in\R}\|\widehat{\varphi}_n-\varphi-a\|_{L^1(\mu)}\big]
    \lesssim
    \E[\|\nabla \widehat{\varphi}_n-\nabla \varphi\|_{L^2(\mu)}]
    \lesssim
    \E[\cW_2(\widehat{\nu}_n,\nu)]\lesssim \alpha(n,d)^{1/2}.
\]
Thus, for $d\geq 4$, this gives only the usual transport-map scale, essentially
$n^{-1/d}$, whereas \cref{Thm:rates-and-CLT} gives the sharper potential rate,
essentially $n^{-2/d}$ up to logarithmic factors. Our present result does not contradict the
minimax optimality of the rate for
$\E\|\nabla\widehat\varphi_n-\nabla\varphi\|_{L^2(\mu)}^2$ since our target is at level of potentials rather than transport maps.

The function-space formulation also yields joint root-$n$ inference for
every fixed finite collection of normalization-invariant weighted
contrasts of the potential;
 see \cref{cor:fixed-weighted-contrasts}.  This
includes the regional shadow premium studied in
\cref{sec:applications}.  Because the theorem treats the potential as a
random function rather than a preselected scalar summary, the same limit
can be projected onto new admissible functionals without repeating the
first-order analysis.

Two distinct mechanisms explain the restriction to $d\leq3$.  The dual
$\cC^{1,\mathrm{LogLip}}$ norm of
$\nu-\widehat{\nu}_n$ has parametric order precisely when the corresponding
entropy integral converges, which happens below dimension four; see
\cref{Lemma:Control-of-entropy}.  Separately, the second-order remainder is
negligible at the $n^{-1/2}$ scale only when $d\leq3$. Furthermore, in \Cref{Thm:rates-and-CLT-bootstrap} we prove that the bootstrap is consistent for these cases. This allows for nonparametric inference of the OT potentials in low dimension.

Finally, our techniques shed light on entropy-regularized optimal transport and provide new stability estimates.   As the regularization parameter
$\varepsilon$ vanishes, \cref{Thm:Sinkhorn} gives matching upper and lower
bounds of order $\varepsilon\log(1/\varepsilon)$ for the
normalization-invariant sum of the two entropic dual potentials.

\subsection{Relationship with previous work and literature review}
\label{sec: PreviousWork}

\subsubsection{Potentials as objects of interest.}
The transport value, map, and potential answer different questions.  The
value is one number, and the map records the assignment.  The potential is
the dual optimizer: its gradient generates the quadratic transport map,
and its averages give the shadow values of changes in the marginal
constraints.  Thus, the potential links the optimal value, the assignment,
and the effect of local changes in a distribution.

This role is explicit in several applications.  In matching models,
potentials are equilibrium payoff and price schedules
\citep{ChiapporiMcCannNesheim2010,Galichon2016,GalichonSalanie2022}.  In
multivariate risk, quantiles, and inequality measurement, integrated or
averaged potentials are the reported quantities
\citep{EkelandGalichonHenry2012,BercuBigotThurin2024,HallinMordant2025}.
In machine learning, convex potentials are learned and differentiated to
produce transport maps
\citep{MakkuvaTaghvaeiOhLee2020,PatyDAspremontCuturi2020}.  Potentials also
appear as geopotentials in semi-geostrophic models
\citep{BenamouBrenier1998,BourneEganPelloniWilkinson2022}.

Our limit theorem applies directly when the reference distribution is
fixed and smooth, the target distribution is sampled, and the quantity of
interest is a fixed normalization-invariant weighted average of the
potential.  It therefore gives joint root-$n$ inference, in dimensions
$d\leq3$, for regional shadow premia and local distribution-shift audits,
as developed in \cref{sec:applications}.  It also covers smooth-reference
versions of integrated-quantile and inequality summaries.  For learned
transport potentials, it describes the sampling error of the exact
semidiscrete optimizer; approximation and optimization errors require
separate control.

Cosmological reconstruction gives the potential a direct physical
meaning.  In the idealized real-space Zel'dovich model, an initial uniform
mass field is transported to the present matter distribution and
\[
\Phi(q)=\frac12\|q\|^2-\phi_I(q),
\]
where $\phi_I$ is the primordial gravitational potential
\citep{LevyMohayaeeVonHausegger2021}.   Real surveys also involve biased or weighted tracers, unobserved
mass, survey geometry, and redshift-space effects
\citep{NikakhtarPadmanabhanLevyShethMohayaee2023}.  This makes cosmology a
clear target for extensions of our results.

\subsubsection{Empirical costs and maps.}
The statistical OT literature first concentrated on the optimal value.
Limit laws for empirical transport costs are available in discrete,
semidiscrete, and smooth settings; see
\citet{delBarrioLoubes2019,delBarrioGonzalezSanzLoubes2024,SommerfeldMunk2018} and the review
of \citet{del2025distributional}.  A cost is one scalar and therefore does
not recover the spatially indexed shadow values studied here.  A separate
literature estimates the Brenier map: \citet{HutterRigollet2021} establish
minimax rates, \citet{Manole.et.al.AoS.2024} study
plug-in map estimators, and
\citet{ManoleBalakrishnanNilesWeedWasserman2023clt} prove pointwise limit
theorems for smoothed map estimators on the flat torus.
\citet{CazellesPauwelsPortales2026} construct maps and couplings from
discrete Brenier potentials.  These results target $\nabla\varphi$ or its
induced coupling.  Our loss is instead imposed directly on the smoother
primitive $\varphi$, modulo constants, and our limit is function-valued;
neither result implies the other.

\subsubsection{Stability and convergence rates.}
\citet{DelalandeMerigot2023} control Brenier maps, and consequently
potentials through Poincar\'e's inequality, by a Wasserstein discrepancy.
Extensions cover log-concave sources and John domains
\citep{letrouit2024gluing}, general power costs
\citep{MischlerTrevisan2024}, and squared Riemannian distance
\citep{kitagawa2025stability}.  In the smooth--smooth regime,
\citet{CajaLopezDelgadinoKitagawa2026} obtain Lipschitz stability and
linear response when both marginals vary; that regime excludes the atomic
empirical target considered here.

Applied to an empirical target, the available Wasserstein-based potential
bound gives essentially $n^{-1/d}$ in high dimension.  Our stability
inequality replaces its leading term by the empirical-process norm dual to
$\cC^{1,\mathrm{LogLip}}$ and retains Wasserstein distance only in a
second-order remainder.  It yields $n^{-1/2}$ for $d\leq3$ and
$n^{-2/d}$, up to logarithms, for $d\geq5$.  The polynomial exponents are
sharp: the low-dimensional lower bound follows from our nondegenerate
functional limit, while the high-dimensional lower bound follows from
approximating a strongly convex function by a polyhedral function with at
most $n$ active pieces.  Thus the comparison is not only an improved upper
bound; it identifies the correct polynomial scale for the empirical
potential.

\subsubsection{Limit theory for potentials.}
\citet{delBarrioGonzalezSanzLoubes2024} prove a central limit theorem for
semidiscrete dual weights when the target support is fixed and finite.
Randomness there changes a finite vector of weights.  Here the empirical
target has $n$ random sites, so the number and locations of the Laguerre
cells both change with $n$.  For fixed entropic regularization,
\citet{GonzalezSanzLoubesNilesWeed2022,Goldfeld.et.al.2024.EJS} obtain a functional central limit
theorem for smooth Schr\"odinger potentials, while
\citet{Mordant2024Entropic} lets the regularization decrease and balances
regularization bias against stochastic error in pointwise and $L^2$
limits.  Our theorem instead treats the exactly unregularized
semidiscrete estimator and gives convergence in
$L^{p'}(\Omega)/\langle1\rangle$ for $d\leq3$.  It therefore yields joint
inference for fixed $L^p$-weighted contrasts, but deliberately does not
claim pointwise inference or a limit theorem for the map.

\subsubsection{Entropic regularization.}
Strong $L^1$ convergence without a rate is proved by
\citet{NutzWiesel2022}.
\citet{LopezRivera2026} derives local uniform rates for potentials and
their gradients under convexity assumptions, whereas
\citet{Malamut-Maxime.SIMA.2025} gives sharp asymptotics for regularized
values and plans.  Statistical limit laws for fixed regularization are
available for the value \citep{MenaNilesWeed2019} and the potentials
\citep{GonzalezSanzLoubesNilesWeed2022,Goldfeld.et.al.2024.EJS}.  Our result answers a different
bias question: for the normalization-invariant sum of the two potentials,
\cref{Thm:Sinkhorn} proves matching upper and lower
$L^1$ bounds of order
$\varepsilon\log(1/\varepsilon)$.  The upper bound comes from our
stability inequality and the lower bound from the value asymptotics of
\citet{Malamut-Maxime.SIMA.2025}.

\paragraph{Organization of the paper}
Section~\ref{sec:main-results} states the stability theorem, derives the
rates and functional central limit theorem for the empirical potential as well as the consistency of the bootstrap, and
establishes the bounds for entropy-regularized potentials.
Section~\ref{sec:applications} develops regional shadow-premium inference
for maximal-correlation risk and its distribution-shift interpretation.
Section~\ref{sec: Proofs} contains the proofs of the main results.
Appendices~\ref{sec: Elliptic}--\ref{sec:green-function} collect the
elliptic background and establish the endpoint estimate and Green-function
bounds for the linearized Monge--Amp\`ere equation. Appendix~\ref{appendix-tecnical-lemmas} contains two technical lemmas used in the elliptic analysis:
the embedding
$W^{2,\mathrm{BMO}}(\Omega)\hookrightarrow
\mathcal C^{1,\mathrm{LogLip}}(\Omega)$
and a compactness result for weak-* convergent measures. Appendix~\ref{sect:omitted-proofs} contains the proofs omitted from the main text.

\section{Main results}
\label{sec:main-results}

\subsection{Notation}
Let $\Omega \subset \R^d$ be open with $\mathcal{C}^{2,\alpha}$ boundary. The supremum norm in $\Omega$ is denoted as $|\cdot|_{\infty,\Omega}$.  Following the style of \cite{GilbargTrudinger.Book}, for $k \in \mathbb{N}_0=\{0, 1,\dots\}$ and $0 < \alpha \le 1$, 
we define the Hölder seminorm
\[
[f]_{k,\alpha;\Omega}
:=
\sup_{|\beta| = k}
\sup_{\substack{x,y \in \Omega \\ x \neq y}}
\frac{|D^{\beta} f(x) - D^{\beta} f(y)|}
{|x-y|^{\alpha}},
\]
where $\beta=(\beta_1, \dots, \beta_d) \in \NN_0^d$, $|\beta|=\sum_{i=1}^d \beta_i$ and  $D^{\beta} f=\frac{\partial^{|\beta|} f}
{\partial x_1^{\beta_1} \cdots \partial x_d^{\beta_d}}$ denotes the mixed partial derivative of order $|\beta|$.
The Hölder space $\mathcal{C}^{k,\alpha}(\Omega)$ is
\[
\mathcal{C}^{k,\alpha}(\Omega)
:=
\left\{
f \in C^{k}(\Omega)
:\;
[f]_{k,\alpha;\Omega} < \infty
\right\}.
\]
The associated norm is
$\|f\|_{k,\alpha;\Omega}
:=
\sum_{|\beta| \le k}
\|D^{\beta} f\|_{\infty,\Omega}
+
[f]_{k,\alpha;\Omega}.$
 Define the log-Lipschitz modulus of continuity
 $\omega_{\rm LogLip}(t) = t (1+ |\log(t)|)
 $ and the space $\cC^{1,\mathrm{LogLip}}(\Omega)$ of $\mathcal{C}^1$ functions with $\nabla f$ having finite log-Lipschitz modulus of continuity. Define the norm 
 \[
\|f\|_{1,\mathrm{LogLip},\Omega}= \|f\|_{\infty,\Omega}
+\|\nabla f\|_{\infty,\Omega}+ \sup_{\substack{x,y \in \Omega \\ 0 < \|x-y\| \le 1}}
\frac{\|\nabla f(x)-\nabla f(y)\|}
{\omega_{\rm LogLip}(\|x-y\|)}. 
\]
For a function space $F$, denote its dual by $F^*$. 
Define the dual norms of $\cC^{1,\mathrm{LogLip}}(\Omega)$ and $\cC^{1,\alpha}(\Omega)$ as $$|\gamma|_{1,\mathrm{LogLip},\Omega}'= \sup_{\|f\|_{1,\mathrm{LogLip},\Omega}\leq 1}|\gamma(f)| \quad \text{and}\quad  |\gamma|_{1,\alpha,\Omega}'= \sup_{\|f\|_{1,\alpha,\Omega}\leq 1}|\gamma(f)|.$$ 
We say that  $\mu\in 
\mathcal{P}^{\geq \lambda}(\Omega)$ if $\mu\in \mathcal{P}(\Omega)$ has density (with respect to the Lebesgue measure) lower  bounded by $\lambda$ in $\Omega$. 
$W^{2, p}(\Omega)$ is the space of functions in $L^p(\Omega)$ whose weak derivatives up to order 2 exist and are in $L^p(\Omega)$.
The space $L_0^p(\Omega)$ is the subspace of $L^p(\Omega)$ of functions integrating to 0. We use the relative-ball definition of BMO on $\Omega$.
For $f\in L^1(\Omega)$ and $D_{x,r}:=B_r(x)\cap\Omega$, write
$f_{x,r}=|D_{x,r}|^{-1}\int_{D_{x,r}}f$.  We say that
$f \in {\rm BMO}(\Omega)$ if
\[
\|f\|_{{\rm BMO}}
:= \sup_{\substack{x\in\Omega\\0<r\leq\diam(\Omega)}}
\frac{1}{|D_{x,r}|}\int_{D_{x,r}} |f-f_{x,r}|\,dx
< \infty,
\]
On a bounded $\mathcal C^{2,\alpha}$ domain this is equivalent to the
interior-cube definition and admits a bounded extension to
${\rm BMO}(\mathbb R^d)$; see \citet{Jones1980BMO}.
Define the norm 
$$ \| f \|_{W^{2,BMO}(\Omega)} = \|  f \|_{\mathrm{W}^{2,2}(\Omega)} +  \sum_{i,j}\| \partial_{ij}  f \|_{{\rm BMO}(\Omega)} $$
and the space 
$W^{2,BMO}(\Omega)$ of functions  $f\in {W^{2,2}(\Omega)} $ with finite $\| f \|_{W^{2,BMO}(\Omega)} $. Further, let
\begin{equation}
    \label{eq: XSpace}
\mathcal{X}:=W^{2,BMO}(\Omega)/\langle 1\rangle 
\end{equation}
be the quotient of $W^{2,BMO}(\Omega)$ by the subspace of constant functions. The quotient norm is denoted by $ \|\cdot\|_\mathcal{X}$.

\subsection{Stability of the potential}
Recall that  the notation $\mu\in 
\mathcal{P}^{\geq \lambda}(\Omega)$ means that $\mu\in \mathcal{P}(\Omega)$ has density lower  bounded by $\lambda$ in $\Omega$.

\begin{theorem}[Weak-norm stability of Brenier potentials]
\label{theorem:main}
Let $0<\alpha<1$, let $\Omega\subset\mathbb R^d$ be bounded and
connected with $\mathcal C^{2,\alpha}$ boundary, and fix $R,\kappa>0$.
Let $\varphi:\mathbb R^d\to\mathbb R$ be
$\kappa$-strongly convex and $\mathcal C^{3,\alpha}$ on a neighborhood
of $\overline\Omega$.  Assume that $\mu$ has a
$\mathcal C^{1,\alpha}(\overline\Omega)$ density, also denoted by
$\mu$, bounded below by $\lambda>0$. Then there exists a constant
$C>0$, depending on
$d,\Omega,\alpha,R,\kappa,\lambda,\|\mu\|_{1,\alpha;\Omega}$ and
$\|\varphi\|_{3,\alpha;\Omega}$, such that, for every l.s.c.~convex
function $\widetilde \varphi$ with
$(\partial \widetilde \varphi(\Omega))\cup
(\nabla \varphi(\Omega))\subset \mathbb{B}_R(0)$,
\begin{equation}
    \label{eq:for-L1-main-th}
     \min_a\| \widetilde \varphi -\varphi -a \|_{L^{1}(\mu)} \leq C \left( |\widetilde{\nu} -\nu| _{1,\mathrm{LogLip},\mathbb{B}_R}' +   \cW_2(  \widetilde{\nu}, \nu) \omega_{{\rm LogLip}}\left(\cW_2(  \widetilde{\nu}, \nu) \right)\right) , 
\end{equation}
where $\nu=(\nabla \varphi)_{\# } \mu $ and $\widetilde{\nu}=(\nabla \widetilde\varphi)_{\# } \mu $. Furthermore, for every
$q\in(1,\infty)$ when $d=1$, and every
$q\in(1,d/(d-1))$ when $d\geq2$, there exists a constant $C_q>0$,
depending further on $q$, such that
\begin{equation}
     \label{eq:for-Lp-main-th-stability}
     \min_{a\in \mathbb{R}}
     \| \widetilde{\varphi}- \varphi-a \|_{L^{q}(\mu)}
     \leq
     C_q\left(
     |\widetilde{\nu} -\nu|_{1,\gamma,\mathbb{B}_R}'
     +
     \mathcal{W}_2(\widetilde{\nu}, \nu)^{1+\gamma}
     \right),
\end{equation}
where $\gamma=1-\frac{d(q-1)}{q}$.
\end{theorem}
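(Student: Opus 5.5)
We argue by duality for the linearized Monge--Amp\`ere operator, whose elliptic theory is collected in the appendices. Set $u:=\widetilde\varphi-\varphi$. For the $L^1$ bound we test $u$ against $g:=\operatorname{sign}(u-\med(u))$, which lies in $L^\infty(\mu)$ and is centered in the sense that $\int g\,d\mu=0$ (adjusting on level sets if needed). The min over $a$ is bounded by $\int (u-\med u)\,g\,d\mu=\int u\,g\,d\mu$.

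\textbf{Step 1 (adjoint problem).} We solve the adjoint linearized Monge--Amp\`ere equation
\[
-\operatorname{div}\bigl(\mu\, (D^2\varphi)^{-1}\nabla \psi\bigr)=g\mu \quad \text{in }\Omega,
\]
with the natural conormal (oblique) boundary condition $\langle (D^2\varphi)^{-1}\nabla\psi,n\rangle=0$. This equation is the linearization of $\varphi\mapsto(\nabla\varphi)_\#\mu$ written on the target side. Set $f:=\psi\circ(\nabla\varphi)^{-1}=\psi\circ\nabla\varphi^*$ on $\nabla\varphi(\Omega)$. The coefficients are $\mathcal C^{1,\alpha}$ because $\varphi\in\mathcal C^{3,\alpha}$ and $\mu\in\mathcal C^{1,\alpha}$, and they are uniformly elliptic by $\kappa$-convexity. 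Since $g\in L^\infty\subset \mathrm{BMO}$, the endpoint estimate of the appendix gives $\psi\in W^{2,\mathrm{BMO}}$ with $\|\psi\|_{\mathcal X}\lesssim\|g\|_\infty$. The embedding $W^{2,\mathrm{BMO}}\hookrightarrow\mathcal C^{1,\mathrm{LogLip}}$ then yields $\|f\|_{1,\mathrm{LogLip}}\le C$, after extending $f$ to $\mathbb B_R$.

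\textbf{Step 2 (exact expansion).} We write
\[
\int f\,d(\widetilde\nu-\nu)=\int \bigl[f(\nabla\widetilde\varphi)-f(\nabla\varphi)\bigr]\,d\mu .
\]
A Taylor expansion of $f$ gives
\[
f(\nabla\widetilde\varphi)-f(\nabla\varphi)=\nabla f(\nabla\varphi)\cdot\nabla u+\mathcal R,
\qquad |\mathcal R|\le C\,|\nabla u|\,\omega_{\rm LogLip}(|\nabla u|),
\]
using the log-Lipschitz modulus of $\nabla f$. By the chain rule, $\nabla f(\nabla\varphi)=(D^2\varphi)^{-1}\nabla\psi$. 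Integrating by parts with the conormal condition then gives
\[
\int\nabla f(\nabla\varphi)\cdot\nabla u\,d\mu=\int u\,g\,d\mu .
\]
This step needs $u\in W^{1,1}$, which holds since $\widetilde\varphi$ is convex and Lipschitz with gradient in $\mathbb B_R$. Consequently,
\[
\int u\,g\,d\mu\le |\widetilde\nu-\nu|'_{1,\mathrm{LogLip}}\,\|f\|_{1,\mathrm{LogLip}}+C\int|\nabla u|\,\omega_{\rm LogLip}(|\nabla u|)\,d\mu .
\]

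\textbf{Step 3 (remainder).} The function $t\omega_{\rm LogLip}(t)\approx t^2\log(1/t)$ is convex near $0$ and $|\nabla u|\le 2R$, so Jensen's inequality together with the concavity of the log term bounds the remainder by $\|\nabla u\|_{L^2(\mu)}\,\omega_{\rm LogLip}(\|\nabla u\|_{L^2(\mu)})$, up to constants. The map stability of \citet{DelalandeMerigot2023} or \citet[Theorem~6]{Manole.et.al.AoS.2024} applies because $\varphi$ is strongly convex and smooth, and gives $\|\nabla u\|_{L^2(\mu)}\lesssim\cW_2(\widetilde\nu,\nu)$. Since $t\omega_{\rm LogLip}(t)$ is monotone, this yields the second term of \eqref{eq:for-L1-main-th}.

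\textbf{Step 4 ($L^q$ bound).} We repeat the argument with $g=|u-a|^{q-1}\operatorname{sign}(u-a)$ normalized in $L^{q'}$, where $q'>d$. The $W^{2,q'}$ theory gives $\psi\in W^{2,q'}\hookrightarrow\mathcal C^{1,\gamma}$ with $\gamma=1-d/q'=1-d(q-1)/q$. The Taylor remainder is then bounded by $\int|\nabla u|^{1+\gamma}\,d\mu\le\|\nabla u\|_{L^2}^{1+\gamma}$, which gives \eqref{eq:for-Lp-main-th-stability}.

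\textbf{Main obstacle.} The hardest step is Step 1, the endpoint $L^\infty\to W^{2,\mathrm{BMO}}$ estimate up to the boundary for the oblique problem. A secondary difficulty is making the integration by parts rigorous for a merely convex $\widetilde\varphi$, which I would handle by mollifying $\widetilde\varphi$ and passing to the limit.
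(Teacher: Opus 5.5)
Your proposal is correct and follows the paper's proof essentially step by step: the Taylor linearization of $(\widetilde\nu-\nu)(\psi\circ\nabla\varphi^*)$ with a log-Lipschitz or $\mathcal C^{1,\gamma}$ remainder, the conormal linearized Monge--Amp\`ere problem solved through \cref{theorem-Solvable} and the embedding $W^{2,\mathrm{BMO}}\hookrightarrow\mathcal C^{1,\mathrm{LogLip}}$, integration by parts, and Jensen plus linear map stability for the remainder; the only cosmetic difference is that you plug in the extremal $\mu$-centered test function $\operatorname{sign}(u-\med u)$ (resp.\ the normalized $|u-a|^{q-1}\operatorname{sign}(u-a)$ with $a$ the $L^q(\mu)$-minimizer, which is exactly what makes it centered) instead of taking a supremum over $h\in L^\infty_0(\Omega)$ and converting $L^1(\Omega)$ into $L^1(\mu)$. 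Two small corrections: the Jensen step must use concavity of $s\mapsto s(1-\tfrac12\log s)$ in $s=\|\nabla u\|^2$ on $\{\|\nabla u\|\le1\}$, since convexity of $t\mapsto t\,\omega_{\rm LogLip}(t)$ points the wrong way, and \citet{DelalandeMerigot2023} only gives a H\"older bound, so the linear estimate $\|\nabla u\|_{L^2(\mu)}\lesssim\cW_2(\widetilde\nu,\nu)$ has to come from the strong convexity and smoothness of $\varphi$, as in \citet{Manole.et.al.AoS.2024}.
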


The proof proceeds by linearizing the push-forward relation around the
reference map $\nabla\varphi$, a strategy also that is also useful to understand the influence function of transport-based quantiles \citet{gonzalez2026influence}. For a smooth test function $g$, a Taylor
expansion gives
\begin{equation}
\label{eq:Taylor-expansion-intro}
(\widetilde\nu-\nu)(g\circ\nabla\varphi^*)
=
\int_\Omega
\left\langle
[\nabla^2\varphi]^{-1}\nabla g,
\nabla(\widetilde\varphi-\varphi)
\right\rangle
\dd\mu
+\mathrm{Rem}(g),
\end{equation}
where
\[
|\mathrm{Rem}(g)|
\lesssim
\|g\|_{W^{2,\mathrm{BMO}}(\Omega)}
W_2(\widetilde\nu,\nu)
\omega_{\mathrm{LogLip}}
\bigl(W_2(\widetilde\nu,\nu)\bigr);
\]
see~\cref{Prop:bound-in-terms-calpha}. To turn this identity into an estimate on
$\widetilde\varphi-\varphi$, fix an arbitrary
$h\in L^\infty_0(\Omega)$ and let $u_h$ solve the linearized Neumann
problem
\begin{equation}
\label{eq:PDE-intro}
\begin{cases}
\operatorname{div}\bigl(
\mu[\nabla^2\varphi]^{-1}\nabla u_h
\bigr)=h
& {\rm in}\ \Omega,\\[0.3em]
\left\langle
\mu[\nabla^2\varphi]^{-1}\nabla u_h,
\mathbf{n}_\Omega
\right\rangle=0
& {\rm on}\ \partial\Omega.
\end{cases}
\end{equation}
By~\cref{theorem-Solvable}, $\|[u_h]\|_{W^{2,\mathrm{BMO}}(\Omega)/\langle1\rangle}
\lesssim
\|h\|_\infty.$ 
The embedding $W^{2,\mathrm{BMO}}(\Omega)
\hookrightarrow
\cC^{1,\mathrm{LogLip}}(\Omega)$ 
allows us to take $g=u_h$ in
\eqref{eq:Taylor-expansion-intro}. Moreover, integration by parts and
the boundary condition in~\eqref{eq:PDE-intro} give
\[
\int_\Omega
\left\langle
[\nabla^2\varphi]^{-1}\nabla u_h,
\nabla(\widetilde\varphi-\varphi)
\right\rangle
\dd\mu
=
-\int_\Omega
h(\widetilde\varphi-\varphi)\,\dd x.
\]
Consequently,
\[
\left|
\int_\Omega
h(\widetilde\varphi-\varphi)\,\dd x
\right|
\lesssim
\|h\|_\infty
\left(
|\widetilde\nu-\nu|_{1,\mathrm{LogLip}}'
+
W_2(\widetilde\nu,\nu)
\omega_{\mathrm{LogLip}}
\bigl(W_2(\widetilde\nu,\nu)\bigr)
\right).
\]
Taking the supremum over $\left\{
h\in L^\infty_0(\Omega):
\|h\|_\infty\leq1
\right\}$ 
yields the desired $L^1(\Omega)$ estimate modulo additive constants.
Since the density of $\mu$ is bounded above and below, this is
equivalent to the corresponding $L^1(\mu)$ estimate.

The $L^q$ estimate follows from the same argument. In this case, one
takes $h\in L^{q'}_0(\Omega)$, where $q'=q/(q-1)>d$, and uses the
standard $W^{2,q'}$ elliptic estimate in place of the endpoint
$W^{2,\mathrm{BMO}}$ estimate.

\subsection{Sample complexity and central limit theorems}
\label{sec:clt}

Let $X_1,\dots,X_n$ be an i.i.d.~sample from
$\nu=(\nabla\varphi)_\#\mu$.  Let
$\widehat{\nu}_n=n^{-1}\sum_{i=1}^n\delta_{X_i}$ and let
$\widehat\varphi_n$ be the Brenier potential from $\mu$ to
$\widehat\nu_n$.  Then it is well known that (see e.g., \cite[Theorem~1.1]{del2025distributional} and \cite[Theorem~6]{Manole.et.al.AoS.2024})
\begin{equation}\label{eq:W2}
     \E[\mathcal{W}_2^2(  \nu ,\widehat{\nu}_n)] \lesssim \alpha(n,d):=\begin{cases}
    n^{-1}  & \text{if } d=1,\\
      n^{-1}\log(n)  & \text{if } d=2,\\
       n^{-\frac{2}{d}}  & \text{if } d\geq 3, 
\end{cases}\qquad \E[\| \nabla \varphi-\nabla \widehat{\varphi}_n  \|_{L^2(\mu)}^2]  \lesssim  \alpha(n,d).
\end{equation}
These map rates are sharp for some distributions
(see, e.g., \citep{JonathanFrancis2019}); in particular, a
parametric $L^2$ limit for the empirical transport map is unavailable in
general.  The potential nevertheless converges faster.  We start with a
complexity estimate; see its proof in~\cref{sec:proof-sample-complexity}.

\begin{lemma}\label{Lemma:Control-of-entropy}
The following estimates hold:
\[
\E\!\left[
|\nu-\widehat\nu_n|_{1,\mathrm{LogLip},\mathbb B_R}'
\right]
\lesssim \beta(n,d):=
\begin{cases}
    n^{-\frac{1}{2}}  & \text{if } d=1,2,3,\\
         n^{-\frac{1}{2}} (\log(n))^{\frac{5}{2}}  & \text{if } d=4,\\
       n^{-2/d}\left( \log (n)\right) ^{(d+2)/d}  & \text{if } d\geq 5.
\end{cases}
\]
\end{lemma}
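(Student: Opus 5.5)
The plan is to treat $|\nu-\widehat\nu_n|_{1,\mathrm{LogLip},\mathbb B_R}'$ as the supremum of an empirical process,
\[
\sup_{f\in\mathcal F}\Big|\frac1n\sum_{i=1}^n \big(f(X_i)-\E f(X_1)\big)\Big|,
\]
indexed by the unit ball $\mathcal F$ of $\cC^{1,\mathrm{LogLip}}(\mathbb B_R)$. We then bound its expectation through metric entropy in the sup norm: Dudley's chaining integral when it converges, and a truncated chaining bound otherwise. Because the functions are bounded by $1$ uniformly and only $\nu$ enters through evaluation points, symmetrization together with the sub-Gaussian maximal inequality reduces everything to estimating $\log N(\eps,\mathcal F,\|\cdot\|_{\infty})$.

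\textbf{Step 1: entropy bound.} We will show
\[
\log N(\eps,\mathcal F,\|\cdot\|_\infty)\lesssim \big(\eps^{-1}\log(1/\eps)\big)^{d/2}
\]
for small $\eps$. The argument follows the classical Kolmogorov--Tikhomirov construction for $\cC^{2}$-type balls. Take a grid of mesh $\delta$ on $\mathbb B_R$. On each cell, approximate $f$ by its first-order Taylor polynomial at the cell center; the log-Lipschitz modulus of $\nabla f$ makes the error $\lesssim \delta\,\omega_{\rm LogLip}(\delta)=\delta^2(1+|\log\delta|)$. Choosing $\delta^2\log(1/\delta)\asymp\eps$ gives $\delta\asymp(\eps/\log(1/\eps))^{1/2}$. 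We then quantize the values and gradients at the grid points at the appropriate precisions ($\eps$ and $\eps/\delta$). Exploiting the chaining between neighbouring cells, as in the standard proof, leaves $O(1)$ choices per cell, so the log-covering number is of order the number of cells, $\delta^{-d}\asymp(\eps^{-1}\log(1/\eps))^{d/2}$. If the neighbour-linking step is too delicate, a cruder count costing an extra $\log(1/\eps)$ factor per cell is acceptable, since the target rates allow some logarithmic slack. I expect this step to be the main obstacle: it determines the exact log powers, and the stated exponents $5/2$ and $(d+2)/d$ must come out of it.

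\textbf{Step 2: chaining.} With $H(\eps)\asymp \eps^{-d/2}\log^{d/2}(1/\eps)$, the truncated Dudley bound reads
\[
\E\sup_{f\in\mathcal F}|(\widehat\nu_n-\nu)f|\lesssim \inf_{\tau>0}\Big\{\tau+n^{-1/2}\int_\tau^1\sqrt{H(\eps)}\,d\eps\Big\}.
\]
The three regimes then follow.
\begin{itemize}
\item For $d\le3$, $\sqrt H\sim\eps^{-d/4}$ up to logarithms with $d/4<1$, so the integral converges at $0$; taking $\tau=0$ gives $n^{-1/2}$.
\item For $d=4$, $\sqrt H\asymp \eps^{-1}\log(1/\eps)$, so $\int_\tau^1\sqrt H\asymp\log^2(1/\tau)$. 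Choosing $\tau=n^{-1/2}$ gives $n^{-1/2}\log^2 n$. The stated $(\log n)^{5/2}$ leaves room for an extra half-log loss in Step 1, which I will allow.
\item For $d\ge5$, $\int_\tau^1\sqrt H\asymp\tau^{1-d/4}\log^{d/4}(1/\tau)$. Balancing $\tau$ against $n^{-1/2}\tau^{1-d/4}\log^{d/4}$ gives $\tau^{d/4}\asymp n^{-1/2}\log^{d/4} n$, that is, $\tau\asymp n^{-2/d}(\log n)$. This matches the stated $n^{-2/d}(\log n)^{(d+2)/d}$ up to the log exponent, and the exact power depends on the log bookkeeping in the entropy bound.
\end{itemize}

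Finally, the sub-Gaussian step requires a finite envelope and $L^2$ diameter. Both are immediate, since $\|f\|_\infty\le1$ and the $X_i$ lie in $\mathbb B_R$ because $\nabla\varphi(\Omega)\subset\mathbb B_R$. Tracking the logarithmic powers carefully through Step 1 and this optimization then yields $\beta(n,d)$.
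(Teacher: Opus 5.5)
Your proposal is correct and follows the paper's argument: the paper also builds a $\delta$-net of $\mathbb B_R$ with $\delta^2\log(1/\delta)\asymp\varepsilon$, quantizes $f(x_k)$ at precision $\varepsilon$ and $\nabla f(x_k)$ at precision $\varepsilon/(\sqrt d\,\delta)$, and feeds the resulting sup-norm entropy into a truncated Dudley bound. The paper quantizes each cell independently, which costs an extra factor $\log(1/\varepsilon)$ in the entropy and gives exactly the exponents $5/2$ and $(d+2)/d$; your neighbour-linking refinement is also valid, since $\omega_{\rm LogLip}(\delta)\asymp\varepsilon/\delta$, and would give the slightly sharper $n^{-1/2}(\log n)^2$ and $n^{-2/d}\log n$, both of which imply $\beta(n,d)$.
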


\begin{lemma}[Empirical CLT in a negative Sobolev space]
\label{lem:empirical-sobolev-clt}
Assume that $d<4$ and let
\[
p>\max\left\{d,\frac{2d}{4-d}\right\}.
\]
Let $D\subset\mathbb R^d$ be bounded with Lipschitz boundary, and let
$\widehat\nu_n$ be the empirical measure associated with an i.i.d.~sample
from a probability measure $\nu$ supported on $D$.
Then
\[
\sqrt n(\nu-\widehat\nu_n)
\rightsquigarrow
\mathbb G_\nu
\qquad\text{in}\qquad
\bigl(W^{2,p}(D)/\langle1\rangle\bigr)^*.
\]
The limit $\mathbb G_\nu$ is a tight centered Gaussian
random element satisfying
\[
\E[\mathbb G_\nu(f)\mathbb G_\nu(g)]
=
\operatorname{Cov}_\nu(f,g),
\]
for every
$f,g\in W^{2,p}(D)/\langle1\rangle$.
\end{lemma}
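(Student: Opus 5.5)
The plan is to derive the CLT from a compact-embedding / Donsker argument in the separable Banach space $E:=(W^{2,p}(D)/\langle1\rangle)^*$. The starting point is that $p>d$ and $p>2d/(4-d)$. By Morrey/Sobolev embedding, $W^{2,p}(D)\hookrightarrow \mathcal C^{1,1-d/p}(\overline D)$, so each $\delta_x$ is a bounded functional on $W^{2,p}(D)$. Hence $\nu-\widehat\nu_n=-\frac1n\sum_{i=1}^n Z_i$ with $Z_i:=\delta_{X_i}-\nu$, and each $Z_i$ annihilates constants, so it is an element of $E$. These are i.i.d., centered, and bounded in $E$ uniformly in $x$, since $\sup_{x\in D}\|\delta_x\|_{E}\lesssim \|\iota\|_{W^{2,p}\to \mathcal C^0}$.

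The claim is therefore a CLT for bounded i.i.d.\ variables in the Banach space $E$. Because $E$ is in general not of type 2 in a usable way, I will not invoke an abstract Banach-space CLT. Instead I would identify $E$ with a sup-norm over the unit ball $\mathcal F:=\{f\in W^{2,p}(D):\|[f]\|\le1\}$ and show that $\mathcal F$ (normalized, say with $\int f\,d\nu=0$, or modulo constants, which the empirical process ignores) is $\nu$-Donsker. Then $\sqrt n(\widehat\nu_n-\nu)\rightsquigarrow\mathbb G$ in $\ell^\infty(\mathcal F)$. Since the process takes values in the closed separable subspace $E\subset\ell^\infty(\mathcal F)$ (the map $x\mapsto\delta_x$ is continuous into $E$ by Hölder continuity of the embedding, so the range is separable), the limit is a tight Borel element of $E$. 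The covariance identity is then immediate from the finite-dimensional CLT.

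For the Donsker property, I will use the uniform entropy bound for Sobolev balls. By Birman–Solomyak type estimates (the same kind of input as in \cref{Lemma:Control-of-entropy}), the unit ball of $W^{2,p}(D)$ with $p>d$ satisfies $\log N(\epsilon,\mathcal F,\|\cdot\|_{\infty})\lesssim \epsilon^{-d/2}$; the sup-norm version requires $2-d/p>0$, which holds. The bracketing integral $\int_0^1\epsilon^{-d/4}\,d\epsilon$ is finite exactly when $d<4$, and the envelope is bounded. Hence Ossiander's / the bracketing CLT (van der Vaart–Wellner, Thm.~2.5.6) gives that $\mathcal F$ is Donsker. The role of the extra condition $p>2d/(4-d)$ is that it ensures $2-d/p>d/2$, i.e.\ the functions are smooth enough in the sup norm for the relevant entropy to be summable even accounting for the loss $d/p$. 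I would use it as a safeguard via the bound $\log N\lesssim\epsilon^{-d/(2-d/p)}$ if only the Hölder embedding into $\mathcal C^{1,1-d/p}$ is used, together with the Kolmogorov–Tikhomirov entropy $\epsilon^{-d/s}$ with $s=2-d/p$. Integrability of $\epsilon^{-d/(2s)}$ needs $s>d/2$, which is precisely $p>2d/(4-d)$. This is the route I will commit to, since it only uses standard Hölder-ball entropy.

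The main obstacle is the last identification step: the weak convergence holds in $\ell^\infty(\mathcal F)$, whereas the statement concerns the dual norm. These coincide because $\|\gamma\|_E=\sup_{f\in\mathcal F}|\gamma(f)|$ by definition. The remaining measurability and separability issue is handled by continuity of $x\mapsto\delta_x$ into $E$: compactness of the embedding $W^{2,p}\hookrightarrow\mathcal C^{1,s'}$ for $s'<s$ gives norm continuity. This also yields tightness of $\mathbb G_\nu$ as a Borel element of $E$.
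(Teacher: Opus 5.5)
Your proposal is correct and follows essentially the same route as the paper. Both arguments place the mean-zero unit ball of $W^{2,p}(D)/\langle1\rangle$ inside a $\mathcal C^{1,1-d/p}$ ball and use the sup-norm H\"older entropy $\varepsilon^{-d/(2-d/p)}$ to build $L^2(\nu)$-brackets, whose entropy integral converges exactly when $p>2d/(4-d)$. Both then apply the bracketing CLT in $\ell^\infty(\mathcal F)$, and use that the dual norm is the supremum over $\mathcal F$ (an isometric embedding with closed range) to place the tight Gaussian limit in $(W^{2,p}(D)/\langle1\rangle)^*$.

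Your aside is also right: the sharper Birman--Solomyak entropy $\varepsilon^{-d/2}$, valid once $p>d/2$, would make $p>2d/(4-d)$ unnecessary for this lemma alone, though that condition is still needed for the remainder term in the main CLT.
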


Denote $\Omega'=\nabla\varphi(\Omega)$. Uniform convexity implies that
$\nabla\varphi:\overline\Omega\to\overline{\Omega'}$ is a
$C^{2,\alpha}$ diffeomorphism with inverse $\nabla\varphi^*$; in
particular, $\Omega'$ has $C^{2,\alpha}$ boundary.
In what follows, $\mathbb G_\nu$ denotes the Gaussian limit furnished by
\cref{lem:empirical-sobolev-clt} with $D=\Omega'$, so that
\[
\mathbb G_\nu
\in
\bigl(W^{2,p}(\Omega')/\langle1\rangle\bigr)^*
\qquad\text{almost surely}.
\] 
Define
\[
\mathcal{T}_{\nabla \varphi}  :
\left(W^{2,p}(\Omega')/\langle 1\rangle\right)^*\to
\left(W^{2,p}(\Omega) / \langle 1\rangle\right)^*
\]
by
$\mathcal{T}_{\nabla \varphi}(\gamma)(f)
=\gamma(f\circ\nabla\varphi^*)$.
The Sobolev chain rule and change of variables give
\[
\|f\circ\nabla\varphi^*\|_{W^{2,p}(\Omega')}
\leq C\|f\|_{W^{2,p}(\Omega)},
\]
so $\mathcal T_{\nabla\varphi}$ is bounded. Finally, define
\[
(\mathcal{L}_A^{-1})^*:
\left(W^{2,p}(\Omega)/\langle 1\rangle\right)^*
\to
L^{p'}(\Omega)/\langle 1\rangle
\]
as the adjoint of the operator $\mathcal{L}_A^{-1}$ mapping
$g\in L^p_0(\Omega)=\{f\in L^p(\Omega):\int_\Omega f=0\}$ to the
unique solution $u\in W^{2,p}(\Omega)/\langle1\rangle$ of
\[
\begin{cases}
{\rm div}\bigl(\mu[\nabla^2\varphi]^{-1}\nabla u\bigr)=g
& {\rm in}\ \Omega,\\
\langle
\mu[\nabla^2\varphi]^{-1}\nabla u,\mathbf{n}_\Omega
\rangle=0
& {\rm on}\ \partial\Omega.
\end{cases}
\]
Here $p'$ denotes the conjugate exponent of $p$, i.e.,
$1=\frac1p+\frac1{p'}$. Then, we arrive at the following result.

\begin{theorem}[Rates and functional limit for empirical potentials]
\label{Thm:rates-and-CLT}
Let $\varphi$ and $\mu$ be as in \cref{theorem:main}. Let
$\widehat{\varphi}_n$ be a convex potential such that
$\nabla\widehat{\varphi}_n$ pushes forward $\mu$ to the empirical
measure $\widehat{\nu}_n$. Then 
\begin{enumerate}
\item $\E\left[
\inf_a
\|\widehat{\varphi}_n-\varphi-a\|_{L^1(\mu)}
\right]
\lesssim
\beta(n,d).$ 
\item For $d=1,2,3$ and
$p>\max\{d,\frac{2d}{4-d}\}$,
$\sqrt n(\widehat{\varphi}_n-\varphi)$ converges in distribution to $(\mathcal L_A^{-1})^*
\bigl(
\mathcal T_{\nabla\varphi}
(\mathbb G_\nu)
\bigr)$ 
in $L^{p'}(\Omega)/\langle1\rangle$.
\end{enumerate}
\end{theorem}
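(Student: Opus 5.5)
Part (1) follows from \cref{theorem:main} with $\widetilde\varphi=\widehat\varphi_n$. Since $\widehat\nu_n$ and $\nu$ are supported in the bounded set $\nabla\varphi(\overline\Omega)$, we may take the semidiscrete potential with $\partial\widehat\varphi_n(\Omega)$ contained in the convex hull of the sample, hence in $\mathbb B_R(0)$ for a suitable fixed $R$. Then \eqref{eq:for-L1-main-th} gives
\[
\inf_a\|\widehat\varphi_n-\varphi-a\|_{L^1(\mu)}\lesssim |\widehat\nu_n-\nu|'_{1,\mathrm{LogLip},\mathbb B_R}+\cW_2(\widehat\nu_n,\nu)\,\omega_{\rm LogLip}(\cW_2(\widehat\nu_n,\nu)).
\]
The first term is handled by \cref{Lemma:Control-of-entropy}. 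For the second, on a bounded set we have $t\,\omega_{\rm LogLip}(t)\lesssim t^2(1+|\log t|)$. The function $s\mapsto s(1+|\log s|)$ is concave near $0$, so Jensen applied to $s=\cW_2^2$ together with \eqref{eq:W2} gives the bound $\alpha(n,d)\log(1/\alpha(n,d))$. This is $O(\beta(n,d))$ in every dimension: it is $n^{-1}\log^2 n$ for $d\le 2$, $n^{-2/3}\log n$ for $d=3$, and $n^{-2/d}\log n\le n^{-2/d}(\log n)^{(d+2)/d}$ for $d\ge4$.

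For part (2), I would repeat the duality argument in the sketch after \cref{theorem:main}, but keep the linear term exactly instead of bounding it. Fix $h\in L^p_0(\Omega)$ and let $u_h=\mathcal L_A^{-1}h\in W^{2,p}(\Omega)$, which lies in $\cC^{1,\gamma_p}$ because $p>d$. The Taylor identity \eqref{eq:Taylor-expansion-intro} with $g=u_h$ (extended to $\mathbb B_R$ via $\nabla\varphi^*$), followed by integration by parts, gives
\[
-\int_\Omega h(\widehat\varphi_n-\varphi)\,dx=(\widehat\nu_n-\nu)(u_h\circ\nabla\varphi^*)-\mathrm{Rem}_n(h),\qquad |\mathrm{Rem}_n(h)|\lesssim\|h\|_{L^p}\,\cW_2(\widehat\nu_n,\nu)^{1+\gamma_p}.
\]
The remainder bound is the $W^{2,p}$ analogue of \cref{Prop:bound-in-terms-calpha}, the same one used for \eqref{eq:for-Lp-main-th-stability}. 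By definition of $\mathcal T_{\nabla\varphi}$ and $(\mathcal L_A^{-1})^*$, this says
\[
\sqrt n\,[\widehat\varphi_n-\varphi]=-(\mathcal L_A^{-1})^*\mathcal T_{\nabla\varphi}\bigl(\sqrt n(\widehat\nu_n-\nu)\bigr)+r_n
\]
in $L^{p'}(\Omega)/\langle1\rangle=(L^p_0)^*$, with $\|r_n\|\lesssim\sqrt n\,\cW_2^{1+\gamma_p}$. The sign is immaterial because $\mathbb G_\nu$ is symmetric, and \cref{lem:empirical-sobolev-clt} is in fact stated for $\sqrt n(\nu-\widehat\nu_n)$.

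The linear map $(\mathcal L_A^{-1})^*\mathcal T_{\nabla\varphi}$ is bounded, hence continuous, so by \cref{lem:empirical-sobolev-clt} with $D=\Omega'$ and the continuous mapping theorem the first term converges to the stated Gaussian. Slutsky's lemma then finishes the proof once $r_n\to0$ in probability.

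The main obstacle is showing $\sqrt n\,\E\,\cW_2(\widehat\nu_n,\nu)^{1+\gamma_p}\to0$. By \eqref{eq:W2} and Jensen, this quantity is at most $\sqrt n\,\alpha(n,d)^{(1+\gamma_p)/2}$. For $d=3$ this is $n^{1/2-(2-3/p)/3}=n^{1/p-1/6}$, which tends to $0$ exactly when $p>6=2d/(4-d)$. For $d=1,2$ the condition $p>d$, which forces $\gamma_p>0$, is enough, up to a harmless logarithm when $d=2$. This is precisely where the hypothesis $p>\max\{d,2d/(4-d)\}$ is used. I would also need to check that the remainder bound holds uniformly over $\|h\|_{L^p}\le1$, which is what makes it a norm bound, and that the convergence takes place in the quotient space.
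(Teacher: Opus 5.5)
Your proposal is correct and follows essentially the same route as the paper. Part (1) combines \cref{theorem:main}, \cref{Lemma:Control-of-entropy} and \eqref{eq:W2} with a Jensen-type bound on the logarithmic remainder. Part (2) uses the $W^{2,p}$ linearization of \cref{Prop:bound-in-terms-calpha}~(ii) tested against $\mathcal L_A^{-1}g$, the duality $(L^p_0)^*=L^{p'}/\langle1\rangle$, the bound $\sqrt n\,\E\,\cW_2(\widehat\nu_n,\nu)^{2-d/p}\to0$, and then continuous mapping and Slutsky.

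One point in part (1) needs tightening. The map $s\mapsto s(1+|\log s|)$ is concave only on $(0,1]$, while $\cW_2^2$ may range up to $4R^2$. You should therefore apply Jensen to a concave increasing function comparable to it, e.g.\ $s\bigl(1+\log(M/s)\bigr)$ with $M=\max\{1,4R^2\}$. This is exactly what the paper's truncated estimate \eqref{eq:truncated-log-moment} accomplishes.
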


The limit distribution is nondegenerate as the following result shows.

\begin{lemma}[Nondegeneracy of the Gaussian limit]
\label{lem:nondegenerate-limit}
Assume the setting of \Cref{Thm:rates-and-CLT}~(ii) holds. Let
\[
Z=(\mathcal L_A^{-1})^*
\mathcal T_{\nabla\varphi}
(\mathbb G_\nu).
\]
For every nonzero $h\in L^p_0(\Omega)$, with
$u_h=\mathcal L_A^{-1}h$,
\[
\operatorname{Var}\!\left(\int_\Omega hZ\,\rd x\right)
=
\operatorname{Var}_{Y\sim\nu}
\{u_h(\nabla\varphi^*(Y))\}
=
\operatorname{Var}_{X\sim\mu}\{u_h(X)\}>0.
\]
\end{lemma}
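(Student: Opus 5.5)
The plan is to compute the variance by unwinding the adjoints. By the definitions of $(\mathcal L_A^{-1})^*$ and $\mathcal T_{\nabla\varphi}$, and since $L^{p'}(\Omega)/\langle1\rangle$ is dual to $L^p_0(\Omega)$, we have for $h\in L^p_0(\Omega)$
\[
\int_\Omega hZ\,\rd x=\bigl((\mathcal L_A^{-1})^*\mathcal T_{\nabla\varphi}\mathbb G_\nu\bigr)(h)
=\mathcal T_{\nabla\varphi}(\mathbb G_\nu)(\mathcal L_A^{-1}h)=\mathbb G_\nu(u_h\circ\nabla\varphi^*).
\]
This pairing does not depend on the representative of $Z$ modulo constants, because $\int_\Omega h=0$. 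Here $u_h\in W^{2,p}(\Omega)/\langle1\rangle$, and $u_h\circ\nabla\varphi^*\in W^{2,p}(\Omega')/\langle1\rangle$ by the chain-rule bound stated before \cref{Thm:rates-and-CLT}. The covariance formula in \cref{lem:empirical-sobolev-clt}, applied with $D=\Omega'$ and $f=g=u_h\circ\nabla\varphi^*$, then gives the first equality: $\operatorname{Var}(\int hZ)=\operatorname{Var}_{Y\sim\nu}\{u_h(\nabla\varphi^*(Y))\}$. The variance is also independent of the representative constant.

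The second equality is a change of variables. Since $\nu=(\nabla\varphi)_\#\mu$ and $\nabla\varphi^*\circ\nabla\varphi=\mathrm{id}$ on $\Omega$, for $Y=\nabla\varphi(X)$ with $X\sim\mu$ we get $u_h(\nabla\varphi^*(Y))=u_h(X)$. Hence the two variances coincide.

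The main step is strict positivity. Suppose $\operatorname{Var}_\mu(u_h)=0$. The density of $\mu$ is bounded below by $\lambda>0$ on $\Omega$, so $u_h$ is Lebesgue-a.e.\ equal to a constant $c$ on $\Omega$. Because $u_h\in W^{2,p}$, this means $u_h=c$ as an element of $W^{2,p}(\Omega)$. In particular $\nabla u_h=0$ a.e., so $\mu[\nabla^2\varphi]^{-1}\nabla u_h=0$. Its weak divergence therefore vanishes, that is, $h=\operatorname{div}(\mu[\nabla^2\varphi]^{-1}\nabla u_h)=0$ in $L^p_0(\Omega)$. This contradicts $h\neq0$.

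The only care needed is that $\mathcal L_A^{-1}h$ is a genuine strong (hence weak) solution in $W^{2,p}$, so the equation holds a.e. Equivalently, one can test the weak formulation against any $\psi\in\mathcal C^1(\overline\Omega)$: if $\nabla u_h=0$ then $\int_\Omega h\psi=0$ for all such $\psi$, which forces $h=0$. Connectedness of $\Omega$ is what makes $\nabla u_h=0$ force $u_h$ to be constant in the quotient space; it is consistent with the argument, though the implication actually used runs from constancy of $u_h$ to $\nabla u_h=0$.
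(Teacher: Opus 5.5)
Your proposal is correct and follows essentially the same route as the paper. You unwind the adjoint to get $\int_\Omega hZ\,\rd x=\mathbb G_\nu(u_h\circ\nabla\varphi^*)$, use the covariance of $\mathbb G_\nu$ and $(\nabla\varphi^*)_\#\nu=\mu$, and then show that zero variance forces $u_h$ to be constant, so that $h=\mathcal L_Au_h=0$. The one difference is minor: the paper passes through continuity of $u_h$ and connectedness of $\Omega$, whereas you argue directly with the weak gradient, which, as you note, makes connectedness unnecessary.
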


\begin{proof}
The two identities follow from the definition of the adjoint and
$(\nabla\varphi^*)_\#\nu=\mu$. If the last variance were zero,
$u_h$ would be constant $\mu$-almost everywhere. The positive
density of $\mu$ and continuity of $u_h$ would make it constant on
the connected domain $\Omega$, contradicting
$\mathcal L_Au_h=h\ne0$.
\end{proof}
\subsubsection{Sharpness of \Cref{Thm:rates-and-CLT}.}

We will now see that the bounds in \Cref{Thm:rates-and-CLT} are sharp up to logarithmic factors. To this end, we begin by proving two auxiliary results.
\begin{lemma}[Affine approximation of a strongly convex function]
\label{lem:local-affine-lower}
Let $K\subset\mathbb R^d$ be a bounded convex Borel set with
$|K|>0$, and let $f:K\to\mathbb R$ be $\kappa$-strongly convex.
Then, for every affine function $\ell$,
\[
\int_K|f(x)-\ell(x)|\,\rd x
\geq c_d\kappa |K|^{1+2/d},
\qquad
c_d=\frac{d\,\omega_d^{-2/d}}
{4(d+2)(1+2^d)},
\]
where $\omega_d$ is the volume of the Euclidean unit ball.
\end{lemma}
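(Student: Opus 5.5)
The plan is to reduce to a single strongly convex function, find its minimizer, and bound the integral from below using a quadratic lower bound together with a doubling (homothety) argument.

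\textbf{Step 1: reduction and quadratic growth.} Set $g:=f-\ell$. Then $g$ is again $\kappa$-strongly convex on $K$, and the claim is $\int_K|g|\,\rd x\geq c_d\kappa|K|^{1+2/d}$. Let $x_0\in\overline K$ minimize $g$ (extended to $\overline K$ by lower semicontinuity) and put $m:=g(x_0)$. First-order optimality on the convex set $K$ combined with strong convexity gives
\[
g(x)\geq m+\tfrac{\kappa}{2}\|x-x_0\|^2,\qquad x\in K .
\]

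\textbf{Step 2: the bathtub estimate.} Every case will end with the following fact. For a set $A$ with $|A|=V$,
\[
\int_A\|x-x_0\|^2\,\rd x\geq \int_{\mathbb B_r(x_0)}\|x-x_0\|^2\,\rd x=\frac{d}{d+2}\,\omega_d^{-2/d}\,V^{1+2/d},
\qquad \omega_d r^d=V .
\]
If $m\geq0$, then $|g|=g\geq\frac\kappa2\|x-x_0\|^2$ on all of $K$, and applying this fact with $A=K$ already gives the result with a better constant.

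\textbf{Step 3: the case $m<0$, via doubling.} This is the main case, and the factor $1+2^d$ in $c_d$ suggests the mechanism. Let $N:=\{g<0\}$, a convex set containing $x_0$, and let $E:=x_0+2(N-x_0)$, so that $|E|=2^d|N|$. Take $z\in K\setminus E$. The midpoint $y=(x_0+z)/2$ lies in $K$ but not in $N$, so $g(y)\geq0$. Strong convexity along the segment gives
\[
0\leq g(y)\leq \tfrac12 g(x_0)+\tfrac12 g(z)-\tfrac{\kappa}{8}\|z-x_0\|^2,
\]
hence $g(z)\geq -m+\frac\kappa4\|z-x_0\|^2\geq\frac\kappa4\|z-x_0\|^2$.

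\emph{Sub-case $|N|\leq|K|/(1+2^d)$.} Then $|K\setminus E|\geq|K|/(1+2^d)$. Step 2 applied on $A=K\setminus E$ gives $\int_K|g|\geq\frac{\kappa}{4}\frac{d}{d+2}\omega_d^{-2/d}\bigl(|K|/(1+2^d)\bigr)^{1+2/d}$. This has the right form; I would then tidy the exponent bookkeeping, possibly replacing the split by a weighted one, to reach exactly $c_d$.

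\emph{Sub-case $|N|>|K|/(1+2^d)$.} Here the negative part must carry the mass. I would run the same doubling argument at a shifted level. Choose $t$ with $|\{g<t\}|=|K|/(1+2^d)$ and apply the argument to $g-t$ on its own negative set. This yields $g\geq t+\frac\kappa4\|z-x_0\|^2$ off a set of measure at most $2^d|K|/(1+2^d)$. Separately, $g\leq t$ on a set of measure $|K|/(1+2^d)$. Because $t$ may have either sign, I would then bound $\int_K|g|$ from below by the larger of two competing estimates: if $t\leq -\frac{\kappa}{8}\rho^2$ (with $\rho$ the bathtub radius of volume $|K|/(1+2^d)$), the negative region alone contributes $\gtrsim\kappa|K|^{1+2/d}$; otherwise the positive tail outside the doubled set does. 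To make this dichotomy clean, I would first try the reduction $\int_K|g|\geq\int_K|g-\operatorname{med}g|$, which is false in general (the median minimizes the right-hand side, not the left), so it cannot be used. I would instead directly balance $|t|$ against $\kappa\rho^2$.

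\textbf{Main obstacle.} The main obstacle is the sub-case where $g$ is very negative on a large set: there one must lower-bound $\int_N(-g)$ for a function $-g$ that is concave and nonnegative on $N$ but need not vanish on $\partial K$. I expect the shifted-level doubling to handle it. Getting the exact constant $\frac{d\,\omega_d^{-2/d}}{4(d+2)(1+2^d)}$ will require choosing the splitting proportion carefully, namely $1:2^d$ between the sublevel set and its doubled complement.
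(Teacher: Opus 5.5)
Your outline does not prove the lemma with the stated constant, and the sub-case you flag as the main obstacle is only sketched. Already in the sub-case $|N|\le|K|/(1+2^d)$ your estimate is
\[
\int_K|g|\,\rd x\;\geq\;\frac{d\,\omega_d^{-2/d}}{4(d+2)(1+2^d)^{1+2/d}}\,\kappa|K|^{1+2/d}=(1+2^d)^{-2/d}\,c_d\,\kappa|K|^{1+2/d},
\]
and $(1+2^d)^{-2/d}<1/4$, so this falls short of $c_d$. The loss is structural, not bookkeeping. Anchoring the doubling at the minimizer forces you to apply the bathtub bound to a set of volume about $|K|/(1+2^d)$ rather than to $K$. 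Since $\int_A\|x-x_0\|^2$ scales like $|A|^{1+2/d}$, the extra factor $|A|^{2/d}$ is what you lose.

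The ``weighted split'' you allude to does not repair this within your scheme. Suppose $|\{g<t\}|=\theta|K|$ with $t\le 0$, and set $\beta=1-2^d\theta$. Add (rather than merely compare) two contributions: $|t|\theta|K|$ from $\{g<t\}$, and the positive tail $\int(\frac\kappa4\|z-x_0\|^2-|t|)_+$ over a set of volume $\beta|K|$. After minimizing over the unknown level $t$, the lower bound this combination guarantees is
\[
\frac{d\,\omega_d^{-2/d}}{4(d+2)}\Bigl(\beta^{1+2/d}-(\beta-\theta)^{1+2/d}\Bigr)\kappa|K|^{1+2/d}.
\]
Now $\beta=\bigl(1-\frac{1}{1+2^d}\bigr)(\beta-\theta)+\frac{1}{1+2^d}$, so convexity of $s\mapsto s^{1+2/d}$ makes the bracket strictly less than $1/(1+2^d)$ for every $\theta\in(0,\frac{1}{1+2^d}]$. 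Larger $\theta$ only makes $\beta$ smaller.

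So, once the shifted-level dichotomy is written out, you get the lemma with a smaller positive constant. That suffices for the exponent $2/d$ in \cref{rem:sharpness}, but not for the displayed $c_d$, and hence not for the constant in \cref{prop:polyhedral-lower}.

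Separately, your dismissal of the median reduction is wrong. Precisely because the median minimizes $c\mapsto\int_K|g-c|$, one has $\int_K|g|\geq\int_K|g-\operatorname{med}g|$. Since $\ell+c$ is again affine, you may normalize $\operatorname{med}(f-\ell)=0$ at no cost.

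The missing idea is to average the midpoint inequality over all pairs of points instead of anchoring it at the minimizer.
\begin{itemize}
\item Let $X,Y$ be independent and uniform on $K$, let $M=(X+Y)/2$, and let $h=f-\ell$. Strong convexity gives $\frac\kappa8\|X-Y\|^2\leq\frac{h(X)+h(Y)}{2}-h(M)$ pointwise.
\item The midpoint $M$ lies in $K$ and has density at most $2^d/|K|$; this is where your homothety factor $2^d$ really belongs. Taking expectations therefore bounds the right-hand side by $\frac{1+2^d}{|K|}\int_K|h|\,\rd x$, whatever the sign pattern of $h$.
\item On the left, $\E\|X-Y\|^2=2\E\|X-\bar x\|^2$ with $\bar x=\E X$. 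Your bathtub fact, applied to all of $K$, gives $\E\|X-\bar x\|^2\geq\frac{d}{d+2}\omega_d^{-2/d}|K|^{2/d}$.
\end{itemize}
Combining the two sides yields exactly $c_d$. This needs no minimizer and no case distinction on the sign of $\min g$, and it does not lose the factor $(1+2^d)^{-2/d}$.
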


\begin{proposition}[Polyhedral lower bound]
\label{prop:polyhedral-lower}
Let $\Omega$ be convex, let the density of $\mu$ be bounded below by
$\lambda>0$, and let $f$ be $\kappa$-strongly convex on $\Omega$.
For every convex polyhedral function
\[
g(x)=\max_{1\leq j\leq N}\{\langle a_j,x\rangle+b_j\},
\qquad N\leq n,
\]
\[
\inf_{c\in\mathbb R}\|g-f-c\|_{L^1(\mu)}
\geq
\lambda c_d\kappa|\Omega|^{1+2/d}n^{-2/d}.
\]
\end{proposition}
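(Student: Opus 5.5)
Fix $c\in\mathbb R$ and put $h:=g-c-f$, so $\|g-f-c\|_{L^1(\mu)}\ge\lambda\int_\Omega|h|\,\rd x$ because the density of $\mu$ is at least $\lambda$. Since $c$ is arbitrary, it suffices to prove
\[
\int_\Omega |(g-c)-f|\,\rd x\ \ge\ c_d\kappa|\Omega|^{1+2/d}n^{-2/d}
\]
for every convex polyhedral $g$ with at most $N\le n$ pieces. Constants can be absorbed into the $b_j$, so we may take $c=0$.

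\textbf{Step 1: partition.} Decompose $\Omega$ into the polyhedral cells
\[
K_j:=\{x\in\Omega:\ \langle a_j,x\rangle+b_j\ge\langle a_k,x\rangle+b_k\ \ \forall k\},
\]
breaking ties by index so that the cells are disjoint Borel sets covering $\Omega$. Each closed cell is an intersection of half-spaces with the convex set $\Omega$, hence convex, and tie-breaking only alters sets of measure zero. On $K_j$ the function $g$ equals the affine function $\ell_j(x)=\langle a_j,x\rangle+b_j$.

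\textbf{Step 2: local bound.} The restriction of $f$ to $K_j$ is still $\kappa$-strongly convex. \Cref{lem:local-affine-lower} therefore applies to every cell with $|K_j|>0$ and gives
\[
\int_{K_j}|f-\ell_j|\ge c_d\kappa|K_j|^{1+2/d}.
\]
Cells of measure zero contribute nothing. Summing over cells,
\[
\int_\Omega|g-f|\ \ge\ c_d\kappa\sum_{j}|K_j|^{1+2/d}.
\]

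\textbf{Step 3: convexity in the volumes.} The map $t\mapsto t^{1+2/d}$ is convex on $[0,\infty)$, at most $N\le n$ cells are nonempty, and $\sum_j|K_j|=|\Omega|$. Jensen's (or the power-mean) inequality then gives
\[
\sum_j|K_j|^{1+2/d}\ \ge\ N\Big(\frac{|\Omega|}{N}\Big)^{1+2/d}=|\Omega|^{1+2/d}N^{-2/d}\ \ge\ |\Omega|^{1+2/d}n^{-2/d}.
\]
Combining this with Step 2 and the reduction in the first paragraph, then taking the infimum over $c$, yields the claim.

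\textbf{Expected obstacle.} The only delicate point is the measure-theoretic bookkeeping in Step 1: the cells must be convex Borel sets that partition $\Omega$ up to null sets. This holds because $\Omega$ is convex, which is exactly why the convexity hypothesis on $\Omega$ is needed. If $\Omega$ were merely $\mathcal C^{2,\alpha}$, the sets $K_j$ could fail to be convex and \Cref{lem:local-affine-lower} would not apply directly. All the analytic difficulty is already contained in \Cref{lem:local-affine-lower}.
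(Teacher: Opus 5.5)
Your proposal is correct and follows the paper's proof: partition the convex $\Omega$ into the convex active cells of $g+c$, apply \cref{lem:local-affine-lower} on each cell, and finish with Jensen's inequality for $t\mapsto t^{1+2/d}$, uniformly in $c$. One minor bookkeeping point: the tie-broken cells need not be convex, so apply the lemma to the closed convex cells $K_j$ themselves, which differ from the tie-broken ones only by null sets (a repeated affine piece simply yields an empty tie-broken cell).
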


\begin{figure}[!t]
    \centering
    \includegraphics[width=\linewidth]{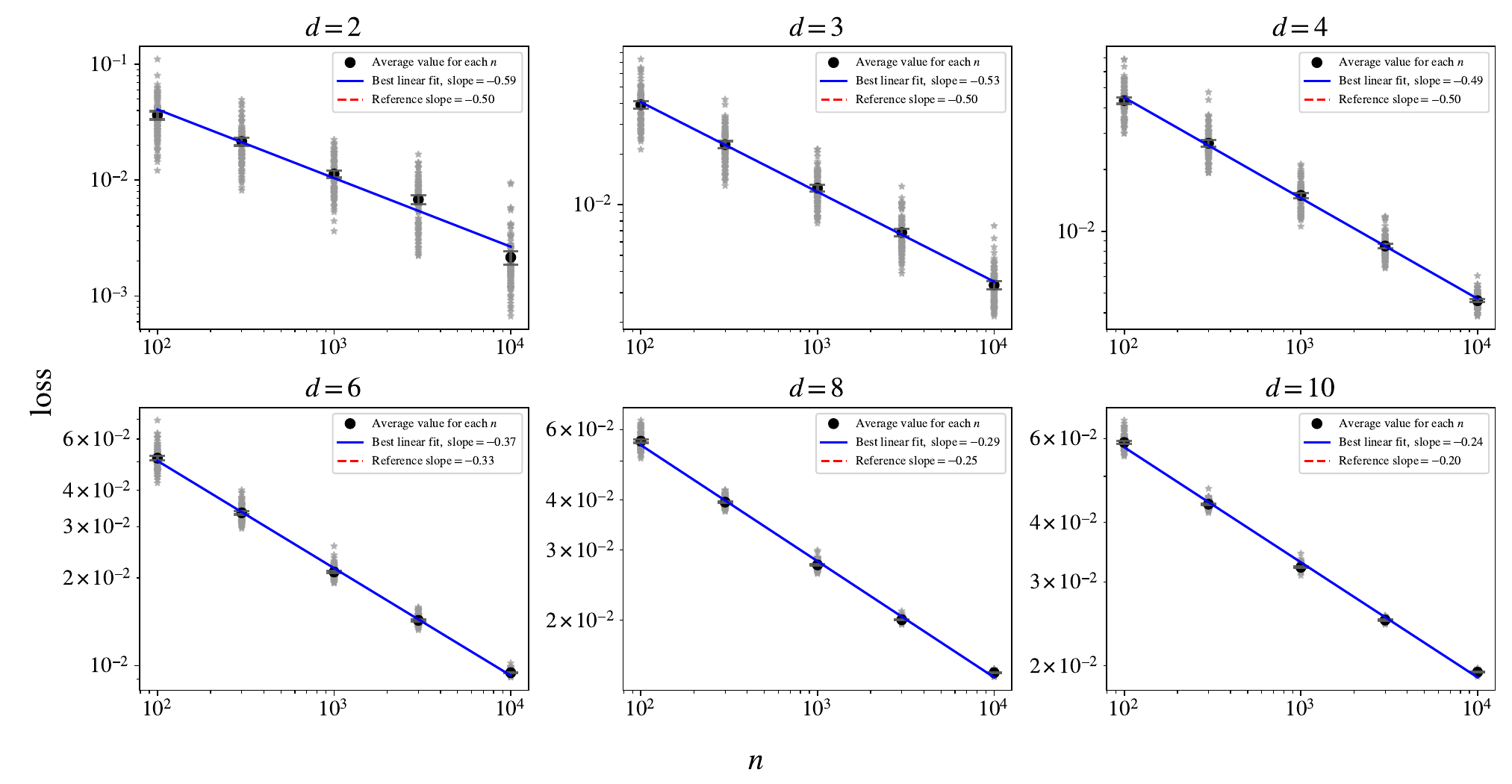}
    \caption{Log-log plot of $\E[\inf_{a\in\R}\|\widehat{\varphi}_n-\varphi-a\|_{L^1(\mu)}]$ for dimensions $d \in \{2,3,4,6,8,10\}$. The black dots report averages over $100$ independent Monte Carlo trials for each $n \in \{100,300,1000,3000, 10000\}$. The semi-discrete Brenier potentials are computed by constructing the associated Laguerre cells, following the procedure described in~\cite[Section~5.2]{peyre2019computational}. The $L^1(\mu)$ integral is approximated by averaging over $10000$ samples from $\mu$.}
    \label{fig:loglog-ot-potential-unit-ball}
\end{figure}

\begin{remark}[Sharpness of the polynomial rates]
\label{rem:sharpness}
For $d\leq3$, choose any nonzero
$h\in L^\infty_0(\Omega)$.  By
\cref{Thm:rates-and-CLT,lem:nondegenerate-limit},
\[
\sqrt n\int_\Omega h(\widehat\varphi_n-\varphi)\,\rd x
\rightsquigarrow N(0,\sigma_h^2),
\qquad \sigma_h^2>0.
\]
Since constants are suppressed by $h$ and the density of $\mu$ is
bounded below, this functional is bounded by a constant times
$\sqrt n\inf_a\|\widehat\varphi_n-\varphi-a\|_{L^1(\mu)}$.
By the continuous mapping theorem (additionally using Fatou's lemma and a Skorokhod representation),
\[
\liminf_{n\to\infty}
\mathbb E\left|
\sqrt n\int_\Omega h(\widehat\varphi_n-\varphi)\,\rd x
\right|
\geq \mathbb E|N(0,\sigma_h^2)|>0.
\]
Consequently,
\[
\liminf_{n\to\infty}\sqrt n\,
\mathbb E\inf_a
\|\widehat\varphi_n-\varphi-a\|_{L^1(\mu)}>0,
\]
so the parametric rate is exact.

For $d\geq5$, take $\Omega$ convex,
$\mu=\nu$ with density bounded below, and
$\varphi(x)=\|x\|^2/2$.  Every potential transporting $\mu$ to an
$n$-atomic measure is the maximum of at most $n$ affine functions.
\Cref{prop:polyhedral-lower} gives, for every sample,
\[
\inf_a\|\widehat\varphi_n-\varphi-a\|_{L^1(\mu)}
\geq c n^{-2/d}.
\]
Thus the exponent $2/d$ is sharp for $d\geq5$.  The same argument at
$d=4$ gives a deterministic $n^{-1/2}$ lower bound, so the polynomial
exponent is sharp there as well.  Dudley-type lower bounds for
$W_2^2(\widehat\nu_n,\nu)$
\citep{Dudley1968,JonathanFrancis2019} suggest the same dimension
threshold, but
they do not logically yield a potential lower bound: the Poincar\'e
comparison is one-sided and cannot be reversed.  The polyhedral
argument supplies the required direct obstruction.  
\end{remark}

\Cref{Thm:rates-and-CLT} gives a potential-level CLT in $L^{p'}(\Omega)/\langle 1\rangle$. This is consistent with the obstruction to an
$H^1$-valued first-order limit in the periodic setting noted in
\cite[Theorem~7]{ManoleBalakrishnanNilesWeedWasserman2023clt}. Indeed, when $d=3$, the condition
$p>\frac{2d}{4-d}=6$ gives $p'=\frac{p}{p-1}<\frac{6}{5}$.
Thus the convergence obtained here is only in the much weaker space
$L^{p'}(\Omega)/\langle1\rangle$, and does not imply convergence in
$H^1$ or any control of the transport maps
$\nabla\widehat{\varphi}_n-\nabla\varphi$ in $L^2(\mu)$.

We further illustrate the rates in~\cref{Thm:rates-and-CLT}~(i) through numerical experiments. 
Obtaining the potentials requires solving a semi-discrete optimal transport problem, which we do via a L-BFGS algorithm. In~\cref{fig:loglog-ot-potential-unit-ball}, we take
$\mu=\nu={\rm Unif}(\mathbb{B}(0,1))$. In this case, the optimal transport map is the identity, and the corresponding Brenier potential is
$\varphi(x)=\frac12\|x\|^2$. Observe that for $d\in \{2,3,4\}$, the observed rate is approximately $n^\delta$ with $\delta \approx -1/2$. In contrast, for $d\in \{6,8,10\}$, the rates exhibit the curse of dimensionality, with $\delta \approx -2/d$.

\subsection{Bootstrap consistency}\label{Sect:bootstrap}

We now show that the usual nonparametric bootstrap consistently
estimates the limiting distribution of the empirical potential. Let
$X_1^*,\dots,X_n^*$ be conditionally i.i.d.~with common distribution
$\widehat\nu_n$, and define
\[
\widehat\nu_n^*
=
\frac1n\sum_{i=1}^n\delta_{X_i^*}.
\]
We first control the distance between $\widehat\nu_n^*$ and $\nu$. 

\begin{lemma}\label{lem:bootstrap-W2}
Let $\widehat\nu_n^*$ be the bootstrap empirical measure of
$\widehat\nu_n$. Then
\[
\E\!\left[
\mathcal W_2^2(\widehat\nu_n^*,\nu)
\right]
\leq
4\E\!\left[
\mathcal W_2^2(\widehat\nu_n,\nu)
\right]
\lesssim
\alpha(n,d).
\]
\end{lemma}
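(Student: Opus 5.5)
The plan is a triangle inequality followed by a conditional reduction to the non-bootstrap rate \eqref{eq:W2}. Since $\mathcal W_2$ is a metric on $\mathcal P(\mathbb B_R)$ and $(a+b)^2\le 2a^2+2b^2$,
\[
\mathcal W_2^2(\widehat\nu_n^*,\nu)\le 2\,\mathcal W_2^2(\widehat\nu_n^*,\widehat\nu_n)+2\,\mathcal W_2^2(\widehat\nu_n,\nu).
\]
The second term already has expectation $2\E[\mathcal W_2^2(\widehat\nu_n,\nu)]$. So the factor $4$ in the lemma follows once the bootstrap fluctuation term satisfies
\[
\E[\mathcal W_2^2(\widehat\nu_n^*,\widehat\nu_n)]\le \E[\mathcal W_2^2(\widehat\nu_n,\nu)].
\]
This comparison is the crux of the proof.

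For the crux, I would condition on $X_1,\dots,X_n$. Conditionally, $\widehat\nu_n^*$ is exactly the empirical measure of an $n$-sample from the (random) law $\rho=\widehat\nu_n$, which is supported in $\mathbb B_R$. This is the same structure as the pair $(\widehat\nu_n,\nu)$, with $\nu$ replaced by $\rho$. The bound \eqref{eq:W2} (from \cite[Theorem~1.1]{del2025distributional}, or the dyadic-partition argument behind it) is uniform over all probability measures on $\mathbb B_R$. Hence
\[
\E[\mathcal W_2^2(\widehat\nu_n^*,\widehat\nu_n)\mid X_{1:n}]\le \sup_{\rho\in\mathcal P(\mathbb B_R)}\E[\mathcal W_2^2(\widehat\rho_n,\rho)]=:B(n,d)\lesssim\alpha(n,d).
\]
Taking expectations and combining with the first display gives the second inequality of the lemma, $\E[\mathcal W_2^2(\widehat\nu_n^*,\nu)]\lesssim\alpha(n,d)$. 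This part of the argument needs no property of $\nu$ beyond its support.

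The main obstacle is the sharp first inequality with the constant exactly $4$, that is, comparing the bootstrap term to $\E[\mathcal W_2^2(\widehat\nu_n,\nu)]$ itself rather than to the worst case $B(n,d)$. Two natural routes fail.

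\emph{Joint convexity.} $\mathcal W_2^2$ is jointly convex, and $\nu=\E[\widehat\nu_n]$, $\widehat\nu_n=\E[\widehat\nu_n^*\mid X_{1:n}]$. Jensen's inequality therefore bounds $\mathcal W_2^2(\widehat\nu_n^*,\nu)$ by $\mathcal W_2^2$ between $\widehat\nu_n^*$ and an independent empirical copy. But this introduces a fresh bootstrap-type distance and becomes circular.

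\emph{Explicit coupling.} Pairing atoms $X_{I_j}$ with $X_{I_j}$ on first occurrences, and repeats with unused points, leaves about $n/e$ unmatched pairs. These cost order $\operatorname{Var}(\nu)$, which is far too large.

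I would therefore try an exchangeability argument. Replace repeated indices by fresh draws, so that $X_{I_1},\dots,X_{I_n}$ is coupled to a genuine i.i.d.\ $\nu$-sample. Then show that, in expectation, removing the repeats cannot increase the transport cost to $\widehat\nu_n$ beyond $\E\mathcal W_2^2(\widehat\nu_n,\nu)$.

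If this fails, I would state the first inequality with $\E[\mathcal W_2^2(\widehat\nu_n,\nu)]$ replaced by its uniform envelope $B(n,d)$. This still yields exactly the rate $\alpha(n,d)$, which is all that \Cref{Thm:rates-and-CLT-bootstrap} uses downstream.
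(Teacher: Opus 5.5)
Your reduction to the uniform envelope $B(n,d)$ is where the proposal breaks, and it breaks in the dimensions where the lemma is actually used.

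\textbf{The envelope is too large for $d\leq 3$.} The rate $\alpha(n,d)$ in \eqref{eq:W2} is not uniform over $\mathcal P(\mathbb B_R)$ when $d\leq3$. Take $\rho=\frac12(\delta_a+\delta_b)$ with $\|a-b\|=R$, and let $\widehat p_n$ be the empirical frequency of $a$. Then $\mathcal W_2^2(\widehat\rho_n,\rho)=|\widehat p_n-\tfrac12|\,R^2$, so $\E\mathcal W_2^2(\widehat\rho_n,\rho)\asymp n^{-1/2}$. This is also the worst-case order from the dyadic-partition argument when $d<4$. The faster rates $n^{-1}$, $n^{-1}\log n$, $n^{-2/3}$ use that $\nu$ has a density bounded above and below on a regular domain. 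The atomic conditional law $\widehat\nu_n$ has no such density. Hence $B(n,d)\asymp n^{-1/2}\gg\alpha(n,d)$ for $d\leq3$.

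Your closing claim, that the envelope is all \Cref{Thm:rates-and-CLT-bootstrap} needs, is therefore false. That theorem is stated for $d\leq3$ and requires $\sqrt n\,\bigl(\E\mathcal W_2^2(\widehat\nu_n^*,\nu)\bigr)^{1-d/(2p)}\to0$. With $n^{-1/2}$ in place of $\alpha(n,d)$, this quantity is $n^{d/(4p)}\to\infty$. The envelope route gives the stated rate only for $d\geq5$, and for $d=4$ up to a logarithm.

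\textbf{The missing idea, which also gives the constant $4$.} Triangulate not through $\widehat\nu_n$ but through an auxiliary $\nu$-empirical measure glued to the bootstrap draws. Write $\E^*$ for conditional expectation given $X_{1:n}$. Conditionally on $X_{1:n}$, fix an optimal $\pi_n\in\Pi(\widehat\nu_n,\nu)$ and draw $(X_i^*,Y_i)_{i=1}^n$ conditionally i.i.d.\ from $\pi_n$. Then:
\begin{itemize}
\item the $X_i^*$ form a bootstrap sample;
\item the $Y_i$ are i.i.d.\ $\nu$, so $\widetilde\nu_n=n^{-1}\sum_i\delta_{Y_i}$ has conditional law equal to the law of $\widehat\nu_n$, giving $\E^*\mathcal W_2^2(\widetilde\nu_n,\nu)=\E\mathcal W_2^2(\widehat\nu_n,\nu)$;
\item the diagonal coupling gives $\E^*\mathcal W_2^2(\widehat\nu_n^*,\widetilde\nu_n)\leq\E^*\bigl[n^{-1}\sum_i\|X_i^*-Y_i\|^2\bigr]=\mathcal W_2^2(\widehat\nu_n,\nu)$.
\end{itemize}
Combine these with $(a+b)^2\leq2a^2+2b^2$ and take expectations to get $\E\mathcal W_2^2(\widehat\nu_n^*,\nu)\leq4\E\mathcal W_2^2(\widehat\nu_n,\nu)$. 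Then \eqref{eq:W2}, applied only to the regular $\nu$, gives $\lesssim\alpha(n,d)$.

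This is exactly what removes the circularity you ran into with joint convexity. The auxiliary sample must be coupled to $\widehat\nu_n^*$ through $\pi_n$, not drawn independently of it. Its distance to $\widehat\nu_n^*$ is then controlled by the single quantity $\mathcal W_2^2(\widehat\nu_n,\nu)$, not by a new bootstrap fluctuation. Your intermediate target $\E\mathcal W_2^2(\widehat\nu_n^*,\widehat\nu_n)\leq\E\mathcal W_2^2(\widehat\nu_n,\nu)$ is a constant-one comparison that you would have to prove separately, and it is never needed.
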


Set $\mathbb G_n^*
=
\sqrt n(\widehat\nu_n^*-\widehat\nu_n).$
We say that $\mathbb G_n^*$ converges conditionally in distribution
to $\mathbb G_\nu$ in $\bigl(W^{2,p}(\Omega')/\langle1\rangle\bigr)^*,$ 
and write $\mathbb G_n^*
\rightsquigarrow_{\mathbb P}
\mathbb G_\nu,$ 
if
\[
\sup_{
H\in{\rm BL}_1(
(W^{2,p}(\Omega')/\langle1\rangle)^*
)
}
\left|
\E^*H(\mathbb G_n^*)
-
\E H(\mathbb G_\nu)
\right|
\overset{\mathbb P}{\longrightarrow}0.
\]
Here ${\rm BL}_1(M)$ denotes, for a metric space $(M,d_M)$, the
collection of functions $H:M\to\mathbb R$ satisfying
\[
\sup_{x\in M}|H(x)|\leq1,
\qquad
|H(x)-H(y)|\leq d_M(x,y),
\quad x,y\in M.
\]
The dual space is endowed with its operator norm. 

\begin{theorem}[Bootstrap consistency]
\label{Thm:rates-and-CLT-bootstrap}
Fix $d\leq3$. Let $\varphi$ and $\mu$ be as in
\cref{theorem:main}. Let $\widehat\varphi_n$
(resp.~$\widehat\varphi_{n,*}$) be a convex potential such that
$\nabla\widehat\varphi_n$
(resp.~$\nabla\widehat\varphi_{n,*}$) pushes forward $\mu$ to
$\widehat\nu_n$ (resp.~to $\widehat\nu_n^*$). Fix
\[
p>\max\left\{d,\frac{2d}{4-d}\right\}.
\]
Then
\[
\sqrt n
\bigl(
\widehat\varphi_{n,*}-\widehat\varphi_n
\bigr)
\rightsquigarrow_{\mathbb P}
(\mathcal L_A^{-1})^*
\bigl(
\mathcal T_{\nabla\varphi}(\mathbb G_\nu)
\bigr)
\qquad\text{in}\qquad
L^{p'}(\Omega)/\langle1\rangle.
\]
\end{theorem}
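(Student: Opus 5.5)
The plan is to repeat the linearization behind \cref{Thm:rates-and-CLT}~(ii), but with the bootstrap pair $(\widehat\nu_n^*,\widehat\nu_n)$ in place of $(\widehat\nu_n,\nu)$, and to pivot both through the fixed reference potential $\varphi$. Write
\[
\sqrt n(\widehat\varphi_{n,*}-\widehat\varphi_n)=\sqrt n(\widehat\varphi_{n,*}-\varphi)-\sqrt n(\widehat\varphi_n-\varphi).
\]
The Taylor expansion \eqref{eq:Taylor-expansion-intro} is taken around the smooth $\nabla\varphi$, not around the polyhedral $\widehat\varphi_n$. Applied with $\widetilde\varphi=\widehat\varphi_{n,*}$ and with $\widetilde\varphi=\widehat\varphi_n$, and subtracted, it gives for every $h\in L^p_0(\Omega)$ with $u_h=\mathcal L_A^{-1}h$:
\[
-\int_\Omega h(\widehat\varphi_{n,*}-\widehat\varphi_n)\,\rd x=(\widehat\nu_n^*-\widehat\nu_n)(u_h\circ\nabla\varphi^*)+\mathrm{Rem}_{n,*}(u_h)-\mathrm{Rem}_{n}(u_h).
\]
This is the $L^{p'}$ duality version of the argument in \cref{theorem:main}, using the $W^{2,p}$ estimate. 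Each remainder is bounded by $\|h\|_{L^p}\,\mathcal W_2(\cdot,\nu)^{1+\gamma}$, with the $\gamma$ appearing in \eqref{eq:for-Lp-main-th-stability}. Taking the supremum over $\|h\|_{L^p}\le1$ then yields, in $L^{p'}(\Omega)/\langle1\rangle$,
\[
\sqrt n(\widehat\varphi_{n,*}-\widehat\varphi_n)=-(\mathcal L_A^{-1})^*\mathcal T_{\nabla\varphi}(\mathbb G_n^*)+R_n^*,
\qquad
\|R_n^*\|\lesssim \sqrt n\bigl(\mathcal W_2(\widehat\nu_n^*,\nu)^{1+\gamma}+\mathcal W_2(\widehat\nu_n,\nu)^{1+\gamma}\bigr).
\]
The sign is the same convention as in the proof of \cref{Thm:rates-and-CLT}; since $\mathbb G_\nu$ is symmetric, it is harmless either way.

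Next I show that $R_n^*$ is negligible. By \cref{lem:bootstrap-W2} and \eqref{eq:W2}, $\E\,\mathcal W_2(\cdot,\nu)^{1+\gamma}\le(\E\mathcal W_2^2)^{(1+\gamma)/2}\lesssim\alpha(n,d)^{(1+\gamma)/2}$. For $d\le3$ and $p>\max\{d,2d/(4-d)\}$ one has $(1+\gamma)/d>1/2$, which is the same verification as in \cref{Thm:rates-and-CLT}. Hence $\E\|R_n^*\|\to0$ under the joint law, and by Markov's inequality $\mathbb P^*(\|R_n^*\|>\eta)\to0$ in outer probability. Consequently, for $H\in{\rm BL}_1$,
\[
|\E^*H(\text{lhs})-\E^*H(-(\mathcal L_A^{-1})^*\mathcal T_{\nabla\varphi}\mathbb G_n^*)|\le \eta+2\mathbb P^*(\|R_n^*\|>\eta),
\]
which tends to $0$ in probability.

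It remains to prove the conditional CLT $\mathbb G_n^*\rightsquigarrow_{\mathbb P}\mathbb G_\nu$ in $(W^{2,p}(\Omega')/\langle1\rangle)^*$; I expect this to be the main obstacle. One subtlety is that $\widehat\nu_n$ is supported in $\overline{\Omega'}$ a.s., so the functionals are well defined. I would establish it through the bootstrap CLT for Donsker classes (Giné–Zinn; van der Vaart–Wellner, Thm.~3.6.1). Sobolev embedding ($p>d$) places $W^{2,p}(\Omega')$ in $\mathcal C^{1,1-d/p}$, so the relevant class is a bounded set $\mathcal F$ of $\mathcal C^{1,\gamma'}$ functions. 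The proof of \cref{lem:empirical-sobolev-clt} shows that this set, modulo constants, is $\nu$-Donsker with bounded envelope; the entropy integral converges exactly under $p>2d/(4-d)$. The multiplier bootstrap theorem then gives conditional weak convergence in $\ell^\infty(\mathcal F)$. The dual norm coincides with the sup over $\mathcal F$, so this transfers to the dual space, where tightness of $\mathbb G_\nu$ comes from \cref{lem:empirical-sobolev-clt}.

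Finally, $\Psi:=(\mathcal L_A^{-1})^*\circ\mathcal T_{\nabla\varphi}$ is bounded and linear, hence Lipschitz. For $H\in{\rm BL}_1$, the function $H\circ\Psi/\max(1,\|\Psi\|)$ again lies in ${\rm BL}_1$, so conditional convergence passes through $\Psi$. Combined with the negligibility of the remainder, this yields the claimed $\sqrt n(\widehat\varphi_{n,*}-\widehat\varphi_n)\rightsquigarrow_{\mathbb P}\Psi(\mathbb G_\nu)$, using $-\mathbb G_\nu\overset{d}{=}\mathbb G_\nu$.
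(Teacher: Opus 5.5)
Your proposal is correct and follows the paper's proof essentially step by step. Like the paper, you linearize $\widehat\varphi_{n,*}$ and $\widehat\varphi_n$ separately around the smooth $\varphi$, bound the difference of remainders by $\sqrt n\{\mathcal W_2(\widehat\nu_n^*,\nu)^{2-d/p}+\mathcal W_2(\widehat\nu_n,\nu)^{2-d/p}\}$ via \cref{lem:bootstrap-W2} and Jensen, and conclude with the bootstrap CLT for the Donsker class of \cref{lem:empirical-sobolev-clt} plus the conditional continuous-mapping and Slutsky arguments; the only difference is that you write these last steps out by hand through the ${\rm BL}_1$ rescaling instead of citing them. One small correction: for $d=1$ the exponent condition should read $(1+\gamma)/2>1/2$, since $\alpha(n,1)=n^{-1}$, and this still holds because $\gamma=1-d/p>0$.
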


For applications, the following corollary will be used. 
\begin{corollary}[Fixed weighted contrasts]
\label{cor:fixed-weighted-contrasts} Assume the setting of \Cref{Thm:rates-and-CLT-bootstrap}~(ii) holds. 
 Fix
$J<\infty$ and $h_1,\ldots,h_J\in L^p_0(\Omega)$.   For
\[
\Theta_j([\psi])=\int_\Omega h_j(x)\psi(x)\,\rd x,
\qquad j=1,\ldots,J,
\]
the vector
\[
\sqrt n\big(
\Theta_j([\widehat\varphi_n])-\Theta_j([\varphi])
\big)_{j=1}^J
\]
converges to the centered Gaussian vector
$\big(\int_\Omega h_j Z\,\rd x\big)_{j=1}^J$, where $Z$ is the
limit in \cref{Thm:rates-and-CLT}~(ii).  Furthermore, conditionally on the sample,
\[
\sqrt n
\left(
\Theta_j([\widehat\varphi_{n,*}])
-
\Theta_j([\widehat\varphi_n])
\right)_{j=1}^J
\rightsquigarrow_{\mathbb P}
\left(
\int_\Omega h_j(x)Z(x)\dd x
\right)_{j=1}^J
\qquad\text{in }\mathbb R^J. 
\]
\end{corollary}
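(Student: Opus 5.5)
The plan is to realise the vector of contrasts as the image of the potential under a fixed continuous linear map and then apply the continuous mapping theorem, once for the unconditional limit of \cref{Thm:rates-and-CLT}~(ii) and once, in its conditional form, for \cref{Thm:rates-and-CLT-bootstrap}. Define $\Pi:L^{p'}(\Omega)/\langle1\rangle\to\mathbb R^J$ by $\Pi([\psi])=(\int_\Omega h_j\psi\,\rd x)_{j=1}^J$.

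\textbf{Step 1 (well-definedness and continuity).} Since $\int_\Omega h_j=0$, adding a constant to $\psi$ does not change $\Pi([\psi])$, so $\Pi$ is well defined on the quotient. For every $a\in\mathbb R$, H\"older's inequality gives
\[
\Big|\int_\Omega h_j\psi\Big|=\Big|\int_\Omega h_j(\psi-a)\Big|\le\|h_j\|_{L^p}\|\psi-a\|_{L^{p'}}.
\]
Taking the infimum over $a$ shows that $\Pi$ is bounded by $\max_j\|h_j\|_{L^p}$ in the quotient norm, and hence is Lipschitz. Linearity also gives
\[
\sqrt n\bigl(\Theta_j([\widehat\varphi_n])-\Theta_j([\varphi])\bigr)_{j}=\Pi\bigl(\sqrt n[\widehat\varphi_n-\varphi]\bigr),
\]
and the bootstrap vector likewise equals $\Pi(\sqrt n[\widehat\varphi_{n,*}-\widehat\varphi_n])$.

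\textbf{Step 2 (unconditional limit).} By \cref{Thm:rates-and-CLT}~(ii), $\sqrt n(\widehat\varphi_n-\varphi)\rightsquigarrow Z$ in $L^{p'}(\Omega)/\langle1\rangle$. The continuous mapping theorem then yields convergence of the vector to $\Pi(Z)=(\int h_jZ)_j$. This limit is a centered Gaussian vector because $\Pi$ is linear and continuous and $Z$ is a centered Gaussian element, being a bounded linear image of $\mathbb G_\nu$.

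\textbf{Step 3 (bootstrap).} Fix $H\in{\rm BL}_1(\mathbb R^J)$ and set $L=\max(1,\max_j\|h_j\|_{L^p})$. Then $H\circ\Pi/L$ lies in ${\rm BL}_1(L^{p'}(\Omega)/\langle1\rangle)$. Consequently
\[
\sup_{H}\bigl|\E^*H(\Pi(\cdot_n^*))-\E H(\Pi(Z))\bigr|\le L\sup_{G}\bigl|\E^*G(\cdot_n^*)-\E G(Z)\bigr|,
\]
where $\cdot_n^*=\sqrt n[\widehat\varphi_{n,*}-\widehat\varphi_n]$ and the second supremum runs over $G\in{\rm BL}_1(L^{p'}(\Omega)/\langle1\rangle)$. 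The right-hand side tends to zero in probability by \cref{Thm:rates-and-CLT-bootstrap}, which gives the conditional claim.

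The only point requiring care is the compatibility of exponents: the $h_j$ must lie in $L^p$, which is the dual of the limit space $L^{p'}$. This is exactly the hypothesis, so no step is a genuine obstacle. The mean-zero condition on the $h_j$ is what makes each contrast a functional on the quotient space.
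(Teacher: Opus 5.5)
Your proposal is correct and follows the paper's proof: each $\Theta_j$ is a bounded linear functional on $L^{p'}(\Omega)/\langle1\rangle$ (well defined because $\int_\Omega h_j=0$), so both claims follow from the continuous mapping theorem and its bootstrap version, which you verify directly with bounded Lipschitz functions instead of citing Kosorok's Theorem~10.8. One cosmetic fix: if $\mathbb R^J$ carries the Euclidean norm, the Lipschitz constant of $\Pi$ is $(\sum_j\|h_j\|_{L^p}^2)^{1/2}$ rather than $\max_j\|h_j\|_{L^p}$, so take $L=\max\{1,(\sum_j\|h_j\|_{L^p}^2)^{1/2}\}$ in Step 3; nothing else changes.
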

\begin{proof}
Each $\Theta_j$ is a continuous linear functional on
$L^{p'}(\Omega)/\langle1\rangle$.  The conclusion follows from the 
continuous  mapping theorem and its bootstrap version 
\citep[Theorem~10.8]{Kosorok.2008.book}. 
\end{proof}

\subsection{Entropy regularized optimal transport}
Let $\mu\in\mathcal P_2(\Omega)$ and
$\nu\in\mathcal P_2(\Omega')$, where
$\Omega'=\nabla\varphi(\Omega)$.  For $\varepsilon>0$, the
entropy--regularized optimal transport (EOT) plan with squared
Euclidean cost is
\[
\pi_\varepsilon
\in
\operatorname*{argmin}_{\pi \in \Pi(\mu,\nu)}
\left\{
\int_{\Omega\times\Omega'}
\frac12 \|x-y\|^2 \, \pi(\rd x,\rd y)
+
\varepsilon
H\big(\pi \mid \mu\otimes\nu\big)
\right\},
\]
where the relative entropy is defined by
\[
H(\pi \mid \mu\otimes\nu)
=
\begin{cases}
\displaystyle
\int_{\Omega\times\Omega'}
\log\!\left(\frac{\dd\pi}{\dd(\mu\otimes\nu)}\right)
\dd\pi,
& \text{if } \pi \ll \mu\otimes\nu, \\[1.2ex]
+\infty,
& \text{otherwise.}
\end{cases}
\]
The dual formulation of the entropy--regularized optimal transport problem is
\begin{multline*}
    \sup_{f,g}
\int_\Omega f(x)\, \mu(\rd x)
+
\int_{\Omega'} g(y)\, \nu(\rd y)
\\-
\varepsilon
\int_{\Omega\times\Omega'}
\exp\!\left(
\frac{f(x)+g(y)-\frac{1}{2}\|x-y\|^2}{\varepsilon}
\right)
\mu(\rd x)\nu(\rd y)
+
\varepsilon. 
\end{multline*}
A pair $(f_\varepsilon,g_\varepsilon)$ attaining this supremum
is called a pair of Schr\"odinger potentials. They are characterized (up to additive constants) by the optimality system
\begin{equation}\label{eq:EOT-system}
    \begin{cases}
\displaystyle
\int_{\Omega'}
\exp\!\left(
\frac{f_\varepsilon(x)+g_\varepsilon(y)-\frac{1}{2}\|x-y\|^2}{\varepsilon}
\right)
\nu(\rd y)
= 1
& \text{for } \mu\text{-a.e. } x, \\[1.2ex]
\displaystyle
\int_\Omega
\exp\!\left(
\frac{f_\varepsilon(x)+g_\varepsilon(y)-\frac{1}{2}\|x-y\|^2}{\varepsilon}
\right)
\mu(\rd x)
= 1
& \text{for } \nu\text{-a.e. } y.
\end{cases}
\end{equation}
Moreover, the optimal plan $\pi_\varepsilon$ is
\[
\frac{\dd\pi_\varepsilon}{\dd \mu\otimes \nu}(x,y)
=
\exp\!\left(
\frac{f_\varepsilon(x)+g_\varepsilon(y)-\frac{1}{2}\|x-y\|^2}{\varepsilon}
\right).
\]
Define the pair
\[
(\varphi_\eps,\psi_\eps)
=\left(\frac12\|\cdot\|^2-f_\eps,
\frac12\|\cdot\|^2-g_\eps\right).
\]
It satisfies
\[
\begin{aligned}
\nabla\varphi_\eps(x)
&=\int y\,\frac{\dd\pi_\varepsilon}{\dd\mu\otimes\nu}(x,y)\,\nu(\rd y),\\
\nabla\psi_\eps(y)
&=\int x\,\frac{\dd\pi_\varepsilon}{\dd\mu\otimes\nu}(x,y)\,\mu(\rd x).
\end{aligned}
\]
Combining \cref{theorem:main} with
\cite[Theorem~3.7]{Malamut-Maxime.SIMA.2025} yields the sharp rate of
convergence of $\varphi_\eps\oplus\psi_\eps$ to
$\varphi\oplus\varphi^*$, where
$(f\oplus g)(x,y)=f(x)+g(y)$.  Recall that
$\varphi^*(x^*)
:=
\sup_{x \in \R^d}
\big\{
\langle x^*, x \rangle - \varphi(x)
\big\}.$

\begin{theorem}[Rates for EOT potentials]\label{Thm:Sinkhorn}
Under the setting of~\cref{theorem:main}, there exists a constant $C$ and $\eps_0>0$ such that, for every $\eps\in (0, \eps_0)$,
    $$ \frac{1}{C}\eps \log\left(1/\eps\right)\leq   \|\varphi_\eps\oplus\psi_\eps - \varphi\oplus \varphi^*\|_{L^1(\mu\otimes \nu)} \leq C \eps \log(1/\eps). $$
\end{theorem}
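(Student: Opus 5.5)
The plan is to prove the two inequalities separately: the upper bound via \cref{theorem:main}, and the lower bound via the value asymptotics of \cite[Theorem~3.7]{Malamut-Maxime.SIMA.2025}.

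\textbf{Upper bound.} First I would reduce to the two marginal errors. By the triangle inequality on $\Omega\times\Omega'$, choosing $a$ and $b$ with $a+b=0$ (the sum $\varphi_\eps\oplus\psi_\eps$ is invariant under $f\mapsto f+c$, $g\mapsto g-c$, so I normalize accordingly), one gets
\[
\|\varphi_\eps\oplus\psi_\eps-\varphi\oplus\varphi^*\|_{L^1(\mu\otimes\nu)}
\le \|\varphi_\eps-\varphi-a\|_{L^1(\mu)}+\|\psi_\eps-\varphi^*+a\|_{L^1(\nu)}.
\]
To use \cref{theorem:main} I need a convex function. The Schr\"odinger potential $\varphi_\eps=\frac12\|\cdot\|^2-f_\eps$ is convex, since $\nabla^2\varphi_\eps$ is a conditional covariance under $\pi_\eps$. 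Its gradient is the barycentric projection, so it takes values in $\mathrm{conv}(\Omega')\subset\mathbb B_R$. I would apply \cref{theorem:main} with $\widetilde\varphi=\varphi_\eps$ and $\widetilde\nu_\eps=(\nabla\varphi_\eps)_\#\mu$. This requires two estimates:
\begin{itemize}
\item[(a)] $|\widetilde\nu_\eps-\nu|'_{1,\mathrm{LogLip}}\lesssim\eps\log(1/\eps)$;
\item[(b)] $\cW_2(\widetilde\nu_\eps,\nu)\lesssim\sqrt{\eps}$ (or better), so that the remainder term is $O(\eps\log(1/\eps))$.
\end{itemize}

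For (a), fix a test function $g$ with $\|g\|_{1,\mathrm{LogLip}}\le1$ and write
\[
\int g\,d\widetilde\nu_\eps-\int g\,d\nu=\int\bigl[g(\mathbb E_{\pi_\eps}[Y\mid x])-\mathbb E_{\pi_\eps}[g(Y)\mid x]\bigr]\,d\mu(x).
\]
Here I use that $\nu$ is the second marginal of $\pi_\eps$. This is a Jensen gap, so a second-order expansion bounds it by the conditional variance times a log-Lipschitz modulus:
\[
\Bigl|\int g\,d\widetilde\nu_\eps-\int g\,d\nu\Bigr|\lesssim \int\mathrm{Var}_{\pi_\eps}(Y\mid x)\bigl(1+|\log\mathrm{Var}_{\pi_\eps}(Y\mid x)|\bigr)\,d\mu.
\]
Since $\int\mathrm{Var}_{\pi_\eps}(Y\mid x)\,d\mu\lesssim\eps$ in the smooth strongly convex setting (via \cite{Malamut-Maxime.SIMA.2025}, or from $\int\|x-y\|^2\,d\pi_\eps-\cW_2^2=O(\eps\log(1/\eps))$ together with strong convexity of $\varphi$), concavity of $t\mapsto t(1+|\log t|)$ gives (a). For (b), $\cW_2^2(\widetilde\nu_\eps,\nu)\le\int\mathrm{Var}_{\pi_\eps}(Y\mid x)\,d\mu\lesssim\eps$, since the barycentric map composed with the conditional law is a coupling. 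The remainder is then $\sqrt\eps\cdot\sqrt\eps\log(1/\eps)$, which is acceptable. The same argument applies symmetrically to $\psi_\eps$ versus $\varphi^*$, with the roles of $\mu$ and $\nu$ exchanged; $\nu$ has a regular density because $\nabla\varphi$ is a $\mathcal C^{2,\alpha}$ diffeomorphism. One difficulty remains: \cref{theorem:main} normalizes the two potentials separately, with constants $a$ and $b$. I would fix $a+b$ by plugging into the dual objective: the integral $\int(\varphi_\eps-\varphi)\,d\mu+\int(\psi_\eps-\varphi^*)\,d\nu$ equals a difference of values, which is $O(\eps\log(1/\eps))$ by Theorem~3.7. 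Hence $|a+b|$ is of the same order.

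\textbf{Lower bound.} By the dual identity,
\[
\int(\varphi_\eps\oplus\psi_\eps-\varphi\oplus\varphi^*)\,d(\mu\otimes\nu)=\int\varphi_\eps\,d\mu+\int\psi_\eps\,d\nu-\int\varphi\,d\mu-\int\varphi^*\,d\nu.
\]
With $f_\eps,g_\eps$, this is exactly $-(\mathrm{OT}_\eps-\mathrm{OT}_0)$ up to the entropy normalization. This works because the exponential term in the dual equals $\eps$ at optimality, so the $\varepsilon$ terms cancel. \cite[Theorem~3.7]{Malamut-Maxime.SIMA.2025} gives $\mathrm{OT}_\eps-\mathrm{OT}_0=\frac d2\eps\log(1/\eps)+O(\eps)$. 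The $L^1$ norm dominates the absolute value of this integral, so the lower bound follows for small $\eps$.

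\textbf{Main obstacle.} I expect the main difficulty to be justifying the bound $\int\mathrm{Var}_{\pi_\eps}(Y\mid x)\,d\mu\lesssim\eps$ with the correct logarithm, uniformly enough to feed the Jensen-gap estimate. I would first try to derive it from the value expansion combined with the strong convexity of $\varphi$, through the standard inequality $\int\|y-\nabla\varphi(x)\|^2\,d\pi_\eps\le\kappa^{-1}\cdot(\text{excess cost})$. That route loses a factor $\log(1/\eps)$, giving only $\eps\log(1/\eps)$ for the variance. The resulting $\eps\log(1/\eps)\cdot\log$ would then have to be refined using the sharper plan asymptotics in \cite{Malamut-Maxime.SIMA.2025}.
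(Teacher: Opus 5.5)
Your argument is correct, and your lower bound and your control of the additive constant match the paper's. You bound $|a+b|$ by the value identity plus the two marginal $L^1$ errors, exactly as the paper does. The paper takes the expansion $\mathrm{EOT}_\eps-\mathrm{OT}=\frac d2\eps\log(1/\eps)+O(\eps)$ from \cite{pal2019difference} after converting entropy normalizations, but any source of that expansion will do.

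For the upper bound your route is different and more modular. You apply \cref{theorem:main} as a black box with $\widetilde\nu_\eps=(\nabla\varphi_\eps)_\#\mu$. That requires three things:
\begin{itemize}
\item convexity of $\varphi_\eps$;
\item a Jensen-gap bound on $|\widetilde\nu_\eps-\nu|'_{1,\mathrm{LogLip},\mathbb B_R}$;
\item the barycentric-coupling bound $\cW_2^2(\widetilde\nu_\eps,\nu)\le\int\mathrm{Var}_{\pi_\eps}(Y\mid x)\,\mu(\rd x)$.
\end{itemize}
Inside \cref{theorem:main} the argument then also uses the map-stability estimate $\|\nabla\varphi_\eps-\nabla\varphi\|_{L^2(\mu)}\lesssim\cW_2(\widetilde\nu_\eps,\nu)$.

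The paper never forms $\widetilde\nu_\eps$. It starts from $0=\int\bigl(g(x)-g(\nabla\varphi^*(y))\bigr)\,\pi_\eps(\rd x,\rd y)$ and Taylor-expands $g\circ\nabla\varphi^*$ once around $\nabla\varphi(x)$. The identity $\nabla\varphi_\eps(x)=\int y\,\pi_\eps(\rd y\mid x)$ turns the linear term into $\int\langle[\nabla^2\varphi]^{-1}\nabla g,\nabla\varphi-\nabla\varphi_\eps\rangle\,\rd\mu$. The paper then reruns the duality step of the proof of \cref{theorem:main} with $g=u_h$. That route is shorter and needs no Wasserstein or map-stability input. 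Yours reuses the theorem verbatim and gives, as by-products, $|\widetilde\nu_\eps-\nu|'_{1,\mathrm{LogLip}}\lesssim\eps\log(1/\eps)$ and $\cW_2(\widetilde\nu_\eps,\nu)\lesssim\sqrt\eps$.

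The obstacle you flag disappears once you use the input the paper uses, so drop your fallback. Both routes need only $\|\nabla\varphi(X)-Y\|_{L^2(\pi_\eps)}\lesssim\sqrt\eps$, which the paper takes from \cite[Remark~3.10]{Malamut-Maxime.SIMA.2025}.
\begin{itemize}
\item The conditional mean minimizes conditional squared error, so $\int\mathrm{Var}_{\pi_\eps}(Y\mid x)\,\mu(\rd x)\le\E_{\pi_\eps}\|Y-\nabla\varphi(X)\|^2\lesssim\eps$. This is exactly what (a) and (b) need, with no extra logarithm.
\item For the $\psi_\eps$ half, $\|X-\nabla\varphi^*(Y)\|\le\kappa^{-1}\|\nabla\varphi(X)-Y\|$ gives the symmetric bound.
\end{itemize}
Going through the value expansion does lose a factor $\log(1/\eps)$, as you suspect. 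The $\frac d2\eps\log(1/\eps)$ in $\mathrm{EOT}_\eps-\mathrm{OT}$ is carried by the entropy term. The transport suboptimality $\int\frac12\|x-y\|^2\,\rd\pi_\eps-\mathrm{OT}$ alone is only $O(\eps)$, and that finer separation is the information behind the cited remark.
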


With these results at hand, we now turn to an application.

\section{Potentials as shadow values: an application to regional risk premia}
\label{sec:applications}

Let $Y\in\mathbb R^d$ be a vector of losses with law $\nu$, and let
$\mu$ be a fixed absolutely continuous baseline distribution.  The
maximal-correlation risk functional of
\citet{EkelandGalichonHenry2012} is
\begin{equation}
\label{eq:maxcorr-risk}
\rho_\mu(\nu)
:=
\sup_{\pi\in\Pi(\mu,\nu)}
\int \langle u,  y \rangle \,\pi(\rd u,\rd y)
=
\inf_{\psi\ {\rm convex}}
\left\{\int\psi\,\rd\mu+\int\psi^*\,\rd\nu\right\}.
\end{equation}
The reference law $\mu$ is part of the definition of the risk
functional, whereas $\nu$ is the law of the observed loss vector.
This is exactly the one-sample setting of
\cref{Thm:rates-and-CLT}.  \citet{EkelandGalichonHenry2012} also give a
general-equilibrium interpretation: $\varphi(u)$ is the indirect
utility of a reference type $u$, $\varphi^*(y)$ is the price of a
loss bundle $y$, and $\nabla\varphi$ is the equilibrium assignment.

The source potential also has a direct interpretation as a marginal
value.  Let $\Omega_a,\Omega_b\subset\Omega$ be disjoint and have positive
$\mu$-probability, and put
\begin{equation}
\label{eq:potential-contrast}
w_{a,b}(u)
:=
\frac{\mathbbm 1_{\Omega_a}(u)}{\mu({\Omega_a})}
-
\frac{\mathbbm 1_{\Omega_b}(u)}{\mu({\Omega_b})},
\qquad
\theta_{a,b}
:=
\int_\Omega\varphi(u)w_{a,b}(u)\,\mu(\rd u).
\end{equation}

For $t\in(-\mu({\Omega_a}),\mu({\Omega_b}))$, define
\[
\mu_t(\rd u)=\{1+t w_{a,b}(u)\}\mu(\rd u).
\]
This perturbation reallocates probability mass $t$ from $\Omega_b$  to $\Omega_a$.

\begin{proposition}[Regional shadow premium]
\label{prop:shadow-premium}
Suppose that the optimizer in \eqref{eq:maxcorr-risk} is unique up to
an additive constant and that normalized dual optimizers for $\mu_t$
converge to the normalized optimizer for $\mu$ as $t\to0$.  Then
\[
\left.\frac{\rd}{\rd t}\rho_{\mu_t}(\nu)\right|_{t=0}
=\theta_{a,b}.
\]
\end{proposition}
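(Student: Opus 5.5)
Since $\rho_{\mu_t}(\nu)$ is an infimum over convex $\psi$ of a functional that is affine in $t$, the plan is a Danskin-type envelope argument carried out by a two-sided sandwich. Write
\[
J_t(\psi):=\int\psi\,\rd\mu_t+\int\psi^*\,\rd\nu
=J_0(\psi)+t\int\psi\,w_{a,b}\,\rd\mu ,
\]
so that $\rho_{\mu_t}(\nu)=\inf_\psi J_t(\psi)$. Because $\int w_{a,b}\,\rd\mu=0$, $\mu_t$ is a probability measure for $t$ in the stated range (nonnegativity holds since $1+tw_{a,b}\ge0$ there). Moreover $J_t(\psi+c)=J_t(\psi)$, so the perturbation term $\int\psi\,w_{a,b}\,\rd\mu$ is invariant under additive constants. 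This is why normalization is harmless: fix the normalization, say $\int\psi\,\rd\mu=0$, under which $\varphi$ and the optimizers $\varphi_t$ for $\mu_t$ are unique by hypothesis.

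\emph{Upper bound.} Plug $\varphi$, which is optimal at $t=0$, into $J_t$:
\[
\rho_{\mu_t}(\nu)\le J_t(\varphi)=\rho_\mu(\nu)+t\,\theta_{a,b}.
\]
\emph{Lower bound.} Plug $\varphi_t$ into $J_0$:
\[
\rho_\mu(\nu)\le J_0(\varphi_t)=\rho_{\mu_t}(\nu)-t\int\varphi_t\,w_{a,b}\,\rd\mu .
\]
Together these give
\[
t\int\varphi_t w_{a,b}\,\rd\mu\;\le\;\rho_{\mu_t}(\nu)-\rho_\mu(\nu)\;\le\; t\,\theta_{a,b}.
\]
Divide by $t$, treating $t>0$ and $t<0$ separately since the inequalities flip. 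The difference quotient is then squeezed between $\theta_{a,b}$ and $\int\varphi_t w_{a,b}\,\rd\mu$. It remains to show $\int\varphi_t w_{a,b}\,\rd\mu\to\int\varphi w_{a,b}\,\rd\mu=\theta_{a,b}$.

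The only nontrivial step is this convergence, and it is where the hypothesis on normalized optimizers enters. Since $w_{a,b}$ is bounded, it suffices to have $\varphi_t\to\varphi$ in $L^1(\mu)$. I would interpret the assumed convergence in that sense, or in a sense that implies it. Convex potentials with gradients in a bounded set are uniformly Lipschitz, so locally uniform convergence of the normalized potentials on $\Omega$ together with this uniform bound yields $L^1(\mu)$ convergence by dominated convergence.

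Two minor technicalities remain. First, the infimum in \eqref{eq:maxcorr-risk} must be finite and attained for $\mu_t$ near $t=0$; this holds when $\nu$ has finite first moment, by standard Kantorovich duality. Second, $\int\varphi\,\rd\mu_t$ must be finite, which follows since $|w_{a,b}|\le\mu(\Omega_a)^{-1}+\mu(\Omega_b)^{-1}$. With both in place, the two one-sided derivatives coincide with $\theta_{a,b}$, which gives the proposition.
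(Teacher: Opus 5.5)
Your proposal is correct and is essentially the paper's own argument made explicit. The paper likewise inserts the $t=0$ optimizer and the optimizer for $\mu_t$ into the two dual objectives to get the one-sided bounds, then closes the sandwich using uniqueness and stability of the normalized potentials, with normalization immaterial because $\int w_{a,b}\,\mathrm{d}\mu=0$. Your extra bookkeeping (the sign flip for $t<0$, finiteness of the integrals, and reading the convergence hypothesis as giving $L^1(\mu)$ convergence) fills in details the paper leaves implicit.
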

Thus, $\theta_{a,b}$ is the marginal change in aggregate
maximal-correlation risk when one infinitesimal unit of reference mass is shifted
from $\Omega_b$  to $\Omega_a$.  We call it a \emph{regional shadow premium}.

This regional contrast measures the first-order sensitivity of maximal-correlation risk to a marginal tilt of the reference distribution from central ranks toward ranks emphasizing jointly large losses.
The
proposition also explains why inference on the potential is not
reducible to inference on the scalar value or on the transport map.
More generally, replacing $w_{a,b}$ by any fixed bounded
zero-$\mu$-mean weight gives a local distribution-shift audit: the
resulting potential contrast is the first-order change in the OT
objective under that prespecified reweighting.  Unlike a global
distance, it identifies which part of the reference population drives
the change.

If $\mu$ has density $m$, the functional in
\eqref{eq:potential-contrast} is represented on
$L^{p'}(\Omega)/\langle1\rangle$ by
\[
g_{a,b}=m\ w_{a,b}\in L^p_0(\Omega).
\]
Set
$\widehat\theta_{a,b}=\int_\Omega\widehat\varphi_nw_{a,b}\,\rd\mu$.
Let $u_{a,b}=\mathcal L_A^{-1}g_{a,b}$ and
$h_{a,b}=u_{a,b}\circ\nabla\varphi^*$.  The continuous mapping
theorem and \cref{Thm:rates-and-CLT} give, for $d\leq3$,
\begin{equation}
\label{eq:contrast-clt}
\sqrt n(\widehat\theta_{a,b}-\theta_{a,b})
=
\frac1{\sqrt n}\sum_{i=1}^n
\left\{\mathbb E h_{a,b}(Y)-h_{a,b}(Y_i)\right\}
+o_{\mathbb P}(1)
\rightsquigarrow N(0,\sigma_{a,b}^2),
\end{equation}
where $\sigma_{a,b}^2
=
{\rm Var}_\nu\{u_{a,b}(\nabla\varphi^*(Y))\}.$ 
Fixed positive-mass regional contrasts therefore admit root-$n$
Gaussian limits.

We illustrate the regional shadow premium with the Loss--ALAE data of
\citet{FreesValdez1998}, distributed with the \texttt{evd} package for the R software.
The data contain 1,500 general-liability claims.  \emph{Loss} is the
indemnity payment and \emph{ALAE} is the allocated loss-adjustment
expense, including investigation and legal costs.  We omit the 34
claims whose indemnity payments were recorded at a policy limit.  For
the remaining $n=1{,}466$ claims, we use the fixed transformation
\[
Y_i=
\left(
\log_{10}\{1+\mathrm{Loss}_i/1000\},
\log_{10}\{1+\mathrm{ALAE}_i/1000\}
\right).
\]

The compact-support, smooth-geometry, and uniform-convexity conditions
of \cref{Thm:rates-and-CLT} are maintained population assumptions in
this illustration.  For the baseline law, let
\[
U=\mathbf 1+Z,\qquad
Z\sim{\rm Unif}\{\mathbb B(0,1)\subset\mathbb R^2\},
\qquad \mathbf 1=(1,1)^\top.
\]
Its support is contained in the positive orthant and
$\mathbb E U=(1,1)^\top$, so the risk functional is componentwise
monotone and has the usual coordinatewise translation normalization.
We display reference ranks in the centered coordinate
$z=u-\mathbf 1$.  In that coordinate, define
\[
\begin{aligned}
\Omega_a^0&=
\left\{z:\|z\|\geq0.70,\
\frac{\pi}{8}\leq\arg(z)\leq\frac{3\pi}{8}\right\},
&
\Omega_b^0 &=\{z:\|z\|\leq0.35\},\\
\Omega_a&=\mathbf 1+\Omega_a^0,
&
\Omega_b&=\mathbf 1+\Omega_b^0.
\end{aligned}
\]
{Region $\Omega_a$ is the outer northeast reference sector.  Under the
empirical transport, its source ranks are assigned to claims with
jointly large indemnity and adjustment costs;} $\Omega_b$ is the central reference region.  Their
baseline probabilities are
$\mu(\Omega_a)=0.06375$ and $\mu(\Omega_b)=0.1225$.

Translating the source disk does not change the optimal assignment,
the regional contrast, or the displayed centered potential.  Solving
the unregularized semidiscrete problem gives
\[
\widehat\theta_{a,b}=1.529.
\]
On the transformed loss scale, shifting one unit of baseline mass
from the central region to the outer joint-loss sector therefore has
an estimated marginal value of $1.529$.  This is an indirect-utility
or shadow-value contrast; it cannot be thought as premium expressed in dollars.    In
contrast, the scalar $\rho_\mu(\widehat\nu_n)$ aggregates the whole
loss distribution and does not reveal which reference region carries
the marginal value.

{Let $\widehat\theta_n^*$ denote the same contrast recomputed after
drawing $n$ claims with replacement from $\widehat\nu_n$.  Since
$g_{a,b}\in L_0^p(\Omega)$, \cref{cor:fixed-weighted-contrasts} gives
\[
 \sqrt n(\widehat\theta_n^*-\widehat\theta_n)
 \rightsquigarrow_{\mathbb P}N(0,\sigma_{a,b}^2).
\]
Thus the usual $n$-out-of-$n$ bootstrap applies directly.  With
$B=1{,}000$ bootstrap samples, its estimated standard error is $0.0192$;
the basic 95\% bootstrap interval is $[1.4906,1.5664]$.  The corresponding
normal interval is $[1.4910,1.5662]$.  Numerical details and sensitivity
checks are given in Appendix~\ref{app:risk-application}.}

\begin{figure}[!t]
\centering
\includegraphics[width=\linewidth]{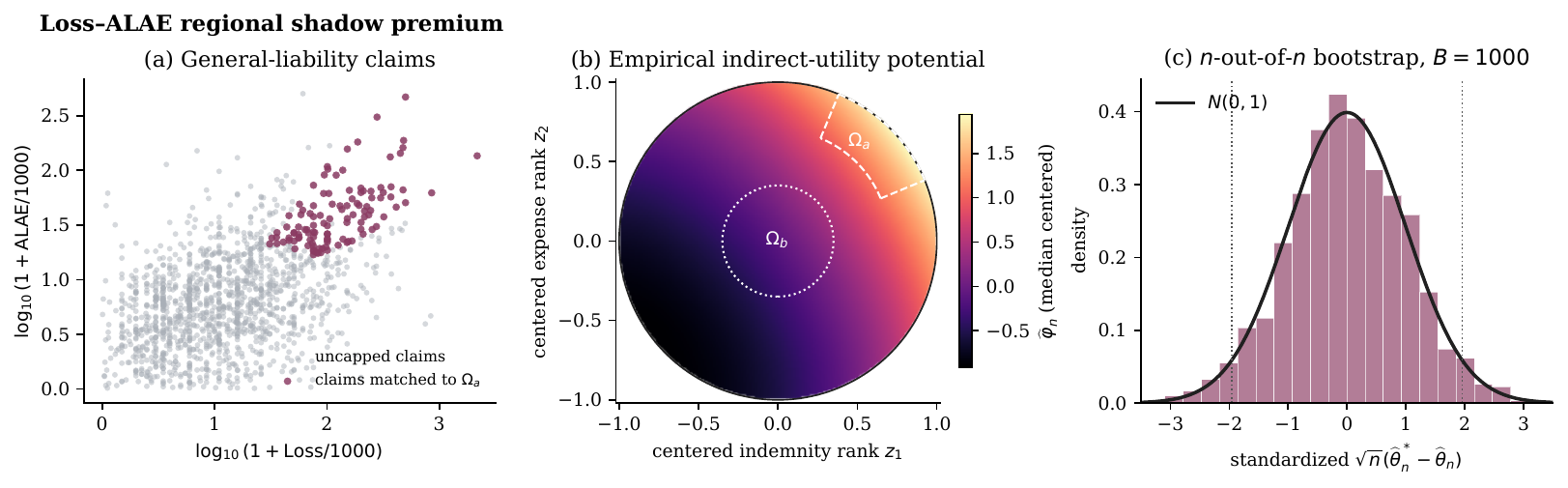}
\caption{Regional shadow premia for the Loss--ALAE claims.
(a) Log-transformed indemnity payments and allocated loss-adjustment
expenses for the 1,466 uncapped claims; {highlighted claims have empirical
Laguerre cells intersecting the outer joint-loss reference sector
$\Omega_a$.}  (b) The median-centered empirical Brenier potential,
displayed in the centered reference coordinate $z=u-\mathbf 1$;
$\Omega_b$  is the central disk.  {(c) Conditional $n$-out-of-$n$ bootstrap
distribution of $\sqrt n(\widehat\theta_n^*-\widehat\theta_n)$, based on
$B=1{,}000$ draws and standardized by its bootstrap standard deviation,
together with the standard normal density.}}
\label{fig:application-risk}
\end{figure}
\medskip

\section{Proofs of the main results}
\label{sec: Proofs}

\subsection{Proof of \cref{theorem:main}}
\begin{proposition}\label{Prop:bound-in-terms-calpha} Under the setting of~\cref{theorem:main},  the following hold.
  
    \begin{enumerate}
    \item There exists a constant $C$ such that, for every $g\in W^{2,BMO}(\Omega)$, 
\begin{multline}\label{eq:development-BMO}
      \left|(\widetilde{\nu} - \nu)(g\circ \nabla \varphi^*)-   \int   \langle [\nabla^2 \varphi]^{-1} \nabla g , \nabla ( \widetilde \varphi - \varphi)\rangle \dd \mu\right| \\
     \leq C\cdot \|g\|_{W^{2,BMO}(\Omega)} \cW_2(  \widetilde{\nu}, \nu)  {\omega}_{{\rm LogLip}}(\cW_2(  \widetilde{\nu}, \nu) ), 
\end{multline}
where we recall that ${\omega}_{{\rm LogLip}}(t) = t(1+|\log(t)|)$.
\item  For every $p>d$,  there exists a constant $C=C(p)$ such that, for every function $g\in \mathcal{W}^{2,p}(\Omega)$,
$$ \left|(\widetilde{\nu} - \nu)(g\circ \nabla \varphi^*)-   \int   \langle [\nabla^2 \varphi]^{-1} \nabla g , \nabla ( \widetilde \varphi - \varphi)\rangle \dd \mu\right| 
     \leq C\cdot  \|g\|_{\mathcal{W}^{2,p}(\Omega)} (\cW_2(  \widetilde{\nu}, \nu) )^{1+\alpha} , $$
where $\alpha=1-\frac{d}{p}$. 
\end{enumerate}
\end{proposition}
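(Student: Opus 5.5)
The plan is a first-order Taylor expansion of the test function along the segment between the two transport maps, followed by Jensen's inequality and a map-stability estimate. Set $G:=g\circ\nabla\varphi^*$ on $\Omega'=\nabla\varphi(\Omega)$ and $\delta:=\nabla\widetilde\varphi-\nabla\varphi$ on $\Omega$.

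\emph{Rewriting the pairing.} Since $\nu=(\nabla\varphi)_\#\mu$ and $\widetilde\nu=(\nabla\widetilde\varphi)_\#\mu$,
\[
(\widetilde\nu-\nu)(G)=\int_\Omega \bigl[G(\nabla\widetilde\varphi(x))-G(\nabla\varphi(x))\bigr]\,\mu(\rd x).
\]
Differentiate $G(\nabla\varphi(x))=g(x)$ and use $\nabla^2\varphi^*(\nabla\varphi(x))=[\nabla^2\varphi(x)]^{-1}$. This gives $\nabla G(\nabla\varphi(x))=[\nabla^2\varphi(x)]^{-1}\nabla g(x)$. Hence the linear term in the statement is exactly $\int\langle\nabla G(\nabla\varphi),\delta\rangle\,\rd\mu$. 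The quantity to bound is then
\[
\mathrm{Rem}=\int_\Omega\int_0^1\bigl\langle\nabla G(\nabla\varphi+t\delta)-\nabla G(\nabla\varphi),\,\delta\bigr\rangle\,\rd t\,\rd\mu .
\]

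\emph{Regularity of $G$.} For part (i), the embedding $W^{2,\mathrm{BMO}}(\Omega)\hookrightarrow\cC^{1,\mathrm{LogLip}}(\Omega)$ from the appendix gives $\|g\|_{1,\mathrm{LogLip},\Omega}\lesssim\|g\|_{W^{2,\mathrm{BMO}}}$. Composition with the $\cC^{2,\alpha}$ diffeomorphism $\nabla\varphi^*$ preserves the log-Lipschitz modulus of the gradient up to constants depending on $\|\varphi\|_{3,\alpha}$ and $\kappa$. The segment $\nabla\varphi+t\delta$ stays in $\mathbb B_R$ by convexity of the ball, but may leave $\Omega'$. So I would extend $G$ to a $\cC^{1,\mathrm{LogLip}}(\mathbb B_R)$ function with comparable norm, via a Whitney-type extension (the boundary of $\Omega'$ is $\cC^{2,\alpha}$ and the modulus $\omega_{\rm LogLip}$ is concave near $0$). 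This extension step is the main technical obstacle. Everything else is elementary.

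\emph{Bounding the remainder.} With the extension, $|\nabla G(y)-\nabla G(y')|\lesssim\|g\|\,\omega_{\rm LogLip}(|y-y'|)$, so
\[
|\mathrm{Rem}|\lesssim\|g\|\int|\delta|\,\omega_{\rm LogLip}(|\delta|)\,\rd\mu .
\]
The function $s\mapsto s(1+\tfrac12|\log s|)$ admits a concave majorant on $[0,4R^2]$ that is comparable to it. Applying Jensen in the variable $s=|\delta|^2$ gives
\[
\int|\delta|\,\omega_{\rm LogLip}(|\delta|)\,\rd\mu\lesssim\|\delta\|_{L^2(\mu)}\,\omega_{\rm LogLip}\bigl(\|\delta\|_{L^2(\mu)}\bigr).
\]
Finally I would invoke the map stability estimate used in the introduction, \cite[Theorem~6]{Manole.et.al.AoS.2024}, valid since $\varphi$ is $\cC^{3,\alpha}$ and strongly convex: $\|\delta\|_{L^2(\mu)}\lesssim\cW_2(\widetilde\nu,\nu)$. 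Because $t\mapsto t\,\omega_{\rm LogLip}(t)$ is increasing up to constants on bounded ranges, this yields \eqref{eq:development-BMO}.

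\emph{Part (ii).} The same argument applies with Morrey's embedding $W^{2,p}\hookrightarrow\cC^{1,\alpha}$, $\alpha=1-d/p$, and a standard $\cC^{1,\alpha}$ extension. The remainder becomes $\lesssim\|g\|_{W^{2,p}}\int|\delta|^{1+\alpha}\,\rd\mu$. Since $1+\alpha\le2$, Hölder's inequality gives $\int|\delta|^{1+\alpha}\,\rd\mu\le\|\delta\|_{L^2(\mu)}^{1+\alpha}\lesssim\cW_2(\widetilde\nu,\nu)^{1+\alpha}$.
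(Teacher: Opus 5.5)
Your proposal is correct and follows the paper's proof essentially step for step. The shared steps are the mean-value expansion along the segment $\nabla\varphi+t\delta$, the identity $\nabla G(\nabla\varphi)=[\nabla^2\varphi]^{-1}\nabla g$, the embedding $W^{2,\mathrm{BMO}}\hookrightarrow\mathcal{C}^{1,\mathrm{LogLip}}$ combined with a Whitney-type extension of $g\circ\nabla\varphi^*$ to $\mathbb{B}_R$ (the paper cites Fefferman), a Jensen bound in $s=|\delta|^2$, and the map-stability estimate of Manole et al., with part (ii) handled through Morrey's embedding and Jensen/H\"older. The only cosmetic difference is that you use a single comparable concave majorant on $[0,4R^2]$, whereas the paper splits the domain into $\{|\delta|\le 1\}$ and its complement; both give the same estimate.
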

\begin{proof}  
Recall that $\nabla \varphi(\Omega) \cup \nabla \widetilde \varphi(\Omega) \subset \BB_R$. Let $f \in \cC^{1}(\BB_R)$. We have
\[
(\widetilde{\nu} - \nu)(f) = \int  f \dd ( (\nabla \widetilde\varphi)_{\# } \mu - (\nabla  \varphi)_{\# }  \mu) 
= \int \bigl(f(\nabla \widetilde \varphi)-f(\nabla \varphi)\bigr) \dd\mu.
\]
Therefore, by the mean-value theorem,
\begin{equation}\label{eq:tylor}
    \begin{aligned}
 &(\widetilde{\nu} - \nu)(f)  -  \int  \langle \nabla f(\nabla \varphi), \nabla ( \widetilde \varphi - \varphi)\rangle \dd \mu \\
 =& \int \int_0^1 \inner{\nabla f(\nabla \varphi + t\nabla (\widetilde \varphi - \varphi)) -\nabla f(\nabla \varphi)}{\nabla (\widetilde \varphi - \varphi)} \dd t \dd \mu.
\end{aligned}
\end{equation}

\textit{Proof of (i).} Assume further that  $f \in \cC^{1,{\rm LogLip}}(\BB_R)$. By definition and as $t(1+|\log(t)|) \leq 1$ for $t\in [0,1]$, \eqref{eq:tylor} is upper bounded by
\begin{equation}\label{eq:loglip}
     \|f\|_{1,{\rm LogLip}, \BB_R}\int \|\nabla \widetilde \varphi-\nabla \varphi\| \omega_{{\rm LogLip}}(\|\nabla \widetilde \varphi-\nabla \varphi\|)  \dd\mu.
\end{equation}
It remains to show that 
\begin{equation}\label{eq:Jensen-LogLip}
     \int \|\nabla \widetilde \varphi-\nabla \varphi\| \omega_{{\rm LogLip}}(\|\nabla \widetilde \varphi-\nabla \varphi\|)  \dd\mu\lesssim   \|\nabla \widetilde \varphi-\nabla \varphi\|_{L^2(\mu)}  {\omega}_{{\rm LogLip}}(\|\nabla \widetilde \varphi-\nabla \varphi\|_{L^2(\mu)}). 
\end{equation}
Indeed, denote by  $\Omega':=\nabla \varphi(\Omega)$ and take $f=g\circ \nabla \varphi^* : \Omega' \to \R $. Then, $\nabla f(\nabla \varphi)= \nabla^2 \varphi^*(\nabla \varphi) \nabla g = [\nabla^2\varphi]^{-1} \nabla g$. Moreover, by~\cite[Theorem~2]{Fefferman2009Extension}, $f$ admits a $\cC^{1,\mathrm{LogLip}}$ extension to $\BB_R$ with  $\|f\|_{1,{\rm LogLip},\mathbb{B}_R(0)}\lesssim \|f\|_{1,{\rm LogLip},\Omega'} $. In addition, since $\Omega'$ is a $\cC^{2,\alpha}$ domain, we conclude that $\|f\|_{1,{\rm LogLip},\mathbb{B}_R(0)} \lesssim \|g\|_{1,{\rm LogLip},\Omega} \lesssim  \|g\|_{W^{2,BMO}(\Omega)}$.
Here, the second-last inequality follows by the $\mathcal{C}^{3,\alpha}$ regularity of $\varphi^*$ and the last inequality holds because $W^{2,BMO}(\Omega) \hookrightarrow  \cC^{1,\mathrm{LogLip}}(\Omega)$ by~\cref{lemma:BMO-inclusion}. Finally, we recall from \cite{Manole.et.al.AoS.2024} that 
\begin{equation}
    \label{eq:TudorsBound}
    \|\nabla \widetilde \varphi-\nabla \varphi\|_{L^2(\mu)}  \lesssim  \cW_2(  \widetilde{\nu}, \nu).  
\end{equation}
Then, combining with~\eqref{eq:tylor} and~\eqref{eq:Jensen-LogLip}, we conclude that
\begin{align*}
    &\left|(\widetilde{\nu} - \nu)(f)  -  \int  \langle \nabla f(\nabla \varphi), \nabla ( \widetilde \varphi - \varphi)\rangle \dd \mu \right|\\
    \lesssim &  \|f\|_{1,{\rm LogLip}, \BB_R} \|\nabla \widetilde \varphi-\nabla \varphi\|_{L^2(\mu)}  {\omega}_{{\rm LogLip}}(\|\nabla \widetilde \varphi-\nabla \varphi\|_{L^2(\mu)})\\
     \lesssim &\|g\|_{W^{2,BMO}(\Omega)} \cW_2(  \widetilde{\nu}, \nu)  {\omega}_{{\rm LogLip}}(\cW_2(  \widetilde{\nu}, \nu)).
\end{align*}
We will now show~\eqref{eq:Jensen-LogLip}. Call $\xi=\widetilde{\varphi}-\varphi$ and set
$
    Z:= \|\nabla\xi\|.
$
Since $\nabla\varphi(\Omega)\cup\nabla\widetilde\varphi(\Omega)\subset\BB_R$, we have
$0\leq \|\nabla\xi\|\leq 2R$. We use the following elementary estimate: if $0\leq \|\nabla\xi\|\leq M$ and
$m:=\int Z^2\dd\mu$, then
\begin{equation}\label{eq:truncated-log-moment}
    \int \|\nabla\xi\|^2(1+|\log \|\nabla\xi\||)\dd\mu
    \leq C_M m\bigl(1+|\log\sqrt{m}|\bigr).
\end{equation}
The assertion is trivial when $m=0$. Otherwise, set $A:=\{\|\nabla\xi\|\leq1\}$ and
$m_0:=\int_A \|\nabla\xi\|^2\dd\mu$. The function
\[
    H(s):=s\left(1-\frac12\log s\right),\qquad s\in(0,1],
\]
extended by $H(0)=0$, is increasing and concave. Thus, if $\mu(A)>0$, conditional
Jensen's inequality gives
\[
    \int_A \|\nabla\xi\|^2(1+|\log \|\nabla\xi\||)\dd\mu
    =\int_A H(\|\nabla\xi\|^2)\dd\mu
    \leq \mu(A)H\left(\frac{m_0}{\mu(A)}\right)
    \leq H(m_0).
\]
The same estimate is immediate when $\mu(A)=0$. If $m\leq1$, it follows that
\[
    \int_A \|\nabla\xi\|^2(1+|\log \|\nabla\xi\||)\dd\mu
    \leq H(m)
    =m\bigl(1+|\log\sqrt m|\bigr).
\]
Moreover,
\[
    \int_{A^c}\|\nabla\xi\|^2(1+|\log \|\nabla\xi\||)\dd\mu
    \leq \bigl(1+\max\{0,\log M\}\bigr)m.
\]
Since $m\leq m(1+|\log\sqrt m|)$ when $m\leq1$, this proves
\eqref{eq:truncated-log-moment} in this case. If $m>1$, then
\[
    \int_A H(\|\nabla\xi\|^2)\dd\mu\leq1\leq m,
\]
while the same estimate on $A^c$ applies. Since
$m\leq m(1+|\log\sqrt m|)$, \eqref{eq:truncated-log-moment} follows in all cases.

Applying~\eqref{eq:truncated-log-moment} with $M=2R$, we obtain
\begin{align*}
    \int \|\nabla\xi\|\omega_{{\rm LogLip}}(\|\nabla\xi\|)\dd\mu
    &=\int Z^2(1+|\log Z|)\dd\mu\\
    &\lesssim m\bigl(1+|\log\sqrt m|\bigr)=\|\nabla\xi\|_{L^2(\mu)}
      \omega_{{\rm LogLip}}\bigl(\|\nabla\xi\|_{L^2(\mu)}\bigr),
\end{align*}
which proves~\eqref{eq:Jensen-LogLip}. 

\textit{Proof of (ii).} The proof follows from an argument similar to that for part~(i) after replacing $\|f\|_{1,{\rm LogLip}, \BB_R}$ in~\eqref{eq:loglip} by $\|f\|_{1,\alpha, \BB_R}$ and noting that $\mathcal{W}^{2,p}(\Omega) \hookrightarrow  \cC^{1,\alpha}(\Omega)$ for $p>d$ and $\alpha=1-\frac{d}{p}$ by the Sobolev embedding theorem. In addition, an analogue of~\eqref{eq:Jensen-LogLip} can be derived directly from Jensen's inequality since the function $t\mapsto t^{\frac{1+\alpha}{2}}$ is concave.
\end{proof}

We are now ready to prove~\cref{theorem:main}. 
\begin{proof}[Proof of \cref{theorem:main}]
We will present the proof for \eqref{eq:for-L1-main-th} as \eqref{eq:for-Lp-main-th-stability} follows via a similar argument and~\cref{Prop:bound-in-terms-calpha}~(ii). Note that \cref{Prop:bound-in-terms-calpha}~(i) yields that, for every $g\in W^{2,BMO}(\Omega)$,  
\begin{multline}\label{eq:development-BMO-2} 
     \left|  \int   \langle [\nabla^2 \varphi]^{-1} \nabla g , \nabla ( \widetilde \varphi  -\varphi)\rangle \dd \mu\right| \\
     \lesssim \|g\|_{W^{2,BMO}(\Omega)} \left(  |\widetilde{\nu} -\nu| _{1,\mathrm{LogLip},\mathbb{B}_R}'+ \cW_2(  \widetilde{\nu}, \nu)  {\omega}_{{\rm LogLip}}(\cW_2(  \widetilde{\nu}, \nu) ) \right).
\end{multline}
By \cref{theorem-Solvable}, for any $h\in L^\infty_0(\Omega)$,  there exists a unique solution $u\in  \bigslant{W^{2,BMO}(\Omega)}{\langle 1\rangle}$ to $$\begin{cases}
    {\rm div}([ \nabla^2 \varphi]^{-1} \mu \nabla u)= h  & \text{in } \Omega,\\
     \langle \nabla u, [ \nabla^2 \varphi]^{-1} \nu_{\Omega} \rangle=0& \text{on } \partial \Omega,
\end{cases} $$
and $\|u\|_{W^{2,BMO}(\Omega)} \lesssim \|h\|_{\infty,\Omega}$.  Hence, take $g=u$ in \eqref{eq:development-BMO-2}, there exists a constant $C$ such that for any $h\in L^\infty_0(\Omega)$, 
\begin{align*}
     C\, \|h\|_{\infty,\Omega}  &\left(  |\widetilde{\nu} -\nu| _{1,\mathrm{LogLip},\mathbb{B}_R}'+ \cW_2(  \widetilde{\nu}, \nu)  {\omega}_{{\rm LogLip}}(\cW_2(  \widetilde{\nu}, \nu) ) \right)\\
     &\geq  \left|  \int   \langle [\nabla^2 \varphi]^{-1} \nabla u , \nabla ( \widetilde \varphi  -\varphi)\rangle \dd \mu\right| =  \left|  \int_\Omega h ( \widetilde \varphi  -\varphi) \dd x \right|.
\end{align*}
To conclude the proof, taking the supremum over $\{ h\in L^\infty_0(\Omega): \|h\|_{\infty,\Omega}\leq 1\}$ and noting that
\[
\sup_{h\in L^\infty_0(\Omega),\, 
\|h\|_{\infty,\Omega}\leq 1}\left|  \int_\Omega h ( \widetilde \varphi  -\varphi) \dd x \right| =  \min_{a\in \R} \|\widetilde\varphi - \varphi -a\|_{L^1(\Omega)}.
\]
The desired result follows since $\mu\in \cP^{\geq \lambda}(\Omega)\cap \cC^{1,\alpha}(\Omega)$ so that $L^1(\mu)$ and $L^1(\Omega)$ are equivalent norms on $\Omega$.
\end{proof}

\subsection{Proof of~\cref{Thm:rates-and-CLT}}\label{sec:proof-sample-complexity}

\cref{theorem:main}, \eqref{eq:W2} and
\cref{Lemma:Control-of-entropy} imply the first claim
once the logarithmic remainder is controlled. Since both
$\widehat\nu_n$ and $\nu$ are supported in $\mathbb B_R$, we have
$0\leq\cW_2(\widehat\nu_n,\nu)\leq2R$. Applying
\eqref{eq:truncated-log-moment} gives
\[
\begin{aligned}
&\E\left[
\cW_2^2(\widehat\nu_n,\nu)
\left(
1+
\left|
\log\cW_2(\widehat\nu_n,\nu)
\right|
\right)
\right]
\\
&\qquad\lesssim
\E\left[
\cW_2^2(\widehat\nu_n,\nu)
\right]
\left(
1+
\left|
\log
\sqrt{
\E\left[
\cW_2^2(\widehat\nu_n,\nu)
\right]
}
\right|
\right)
\\
&\qquad\lesssim
\alpha(n,d)(1+\log n)
\lesssim
\beta(n,d).
\end{aligned}
\]
Note that
$\E[\cW_2^2(\widehat\nu_n,\nu)]>0$ and the last inequality
follows by considering separately the cases in \eqref{eq:W2}. Combining
this estimate with \cref{Lemma:Control-of-entropy} proves the first
claim.

We now prove the second claim. Recall that we use the same notation
for $\mu$ and its density. From \cref{theorem:solvable-lp}, we know
that for every $g\in L^p_0(\Omega)$, there exists a unique solution
$u\in W^{2,p}(\Omega)/\langle 1\rangle$ to
\[
\begin{cases}
{\rm div}\bigl(\mu[\nabla^2\varphi]^{-1}\nabla u\bigr)=g
&\text{in }\Omega,\\
\left\langle
\mu[\nabla^2\varphi]^{-1}\nabla u,\nu_{\Omega}
\right\rangle=0
&\text{on }\partial\Omega.
\end{cases}
\]
Furthermore, the operator $\mathcal{L}_A^{-1}$ with
$A=\mu[\nabla^2\varphi]^{-1}$ is bounded so that the adjoint
\[
(\mathcal{L}_A^{-1})^*:
\left(W^{2,p}(\Omega)/\langle1\rangle\right)^*
\to L^{p'}(\Omega)/\langle1\rangle
\]
is also bounded. Recall that the adjoint satisfies
\[
\int_\Omega g(\mathcal{L}_A^{-1})^*(\gamma)\dd x
=
\gamma(\mathcal{L}_A^{-1}(g)),
\quad
\text{for all }
\gamma\in
\left(W^{2,p}(\Omega)/\langle1\rangle\right)^*,
\quad
g\in L^p_0(\Omega),
\]
and the bounded operator
\begin{align*}
\mathcal{T}_{\nabla\varphi}:
\left(W^{2,p}(\Omega')/\langle1\rangle\right)^*
&\to
\left(W^{2,p}(\Omega)/\langle1\rangle\right)^*,
\\
\gamma
&\mapsto
\left(
f\mapsto
\mathcal{T}_{\nabla\varphi}(\gamma)(f)
=
\gamma(f\circ\nabla\varphi^*)
\right).
\end{align*}
Therefore, \cref{Prop:bound-in-terms-calpha}~(ii) implies the
following: for every $g\in L^p_0(\Omega)$,
\begin{align*}
C&\cdot\|g\|_{L^p(\Omega)}
\bigl(\cW_2(\widehat{\nu}_n,\nu)\bigr)^{1+\alpha}
\\
&\geq
C\cdot
\|\mathcal{L}_A^{-1}(g)\|_{W^{2,p}(\Omega)}
\bigl(\cW_2(\widehat{\nu}_n,\nu)\bigr)^{1+\alpha}
\\
&\geq
\left|
(\widehat{\nu}_n-\nu)
\bigl(\cL_A^{-1}(g)\circ\nabla\varphi^*\bigr)
-
\int
\left\langle
[\nabla^2\varphi]^{-1}
\nabla(\mathcal{L}_A^{-1}(g)),
\nabla(\widehat{\varphi}_n-\varphi)
\right\rangle
\dd\mu
\right|
\\
&=
\left|
\cT_{\nabla\varphi}(\widehat{\nu}_n-\nu)
\bigl(\cL_A^{-1}(g)\bigr)
+
\int_\Omega g(\widehat{\varphi}_n-\varphi)\dd x
\right|
\\
&=
\left|
\int_\Omega
g\left\{
(\mathcal{L}_A^{-1})^*
\mathcal{T}_{\nabla\varphi}(\widehat{\nu}_n-\nu)
+\widehat{\varphi}_n-\varphi
\right\}
\dd x
\right|.
\end{align*}
Here, the second-to-last line follows from integration by parts
and the last one from the definition of the adjoint.
Taking the supremum on
$\{g\in L^p_0(\Omega):\|g\|_{L^p(\Omega)}\leq1\}$ for both sides
yields a constant $C=C(p)$ such that
\[
\left\|
(\mathcal{L}_A^{-1})^*
\mathcal{T}_{\nabla\varphi}(\widehat{\nu}_n-\nu)
+\widehat{\varphi}_n-\varphi
\right\|_{L^{p'}(\Omega)/\langle1\rangle}
\leq
C\bigl(\cW_2(\widehat{\nu}_n,\nu)\bigr)^{1+\alpha},
\]
for
$\alpha=1-\frac{(p'-1)d}{p'}=1-\frac{d}{p}$. Since
\[
\E[\cW_2^2(\widehat{\nu}_n,\nu)]
\lesssim
\alpha(n,d)
=
\begin{cases}
n^{-1} & \text{if }d=1,\\
n^{-1}\log(n) & \text{if }d=2,\\
n^{-\frac{2}{d}} & \text{if }d\geq3,
\end{cases}
\]
using Jensen's inequality and the concavity of the map
$t\mapsto t^{1-\frac{d}{2p}}$, we can derive that
\[
\sqrt{n}\,
\E\left[
\cW_2(\widehat{\nu}_n,\nu)^{2-\frac{d}{p}}
\right]
\longrightarrow0
\]
for $d=1,2,3$ and any
$p>\max\{d,\frac{2d}{4-d}\}$ as $n\to\infty$, and thus
\[
\sqrt{n}\,
\cW_2(\widehat{\nu}_n,\nu)^{2-\frac{d}{p}}
=o_{\PP}(1)
\]
by Markov's inequality.
Since $1+\alpha=2-\frac{d}{p}$, the previous estimate gives
\[
\sqrt{n}(\widehat{\varphi}_n-\varphi)
=
(\mathcal{L}_A^{-1})^*
\mathcal{T}_{\nabla\varphi}
\bigl(\sqrt{n}(\nu-\widehat{\nu}_n)\bigr)
+o_{\PP}(1)
\]
in $L^{p'}(\Omega)/\langle1\rangle$. By
\cref{lem:empirical-sobolev-clt},
\[
\sqrt{n}(\nu-\widehat{\nu}_n)
\rightsquigarrow
\mathbb{G}_\nu
\quad\text{in}\quad
\left(W^{2,p}(\Omega')/\langle1\rangle\right)^*.
\]
The boundedness of $\mathcal{T}_{\nabla\varphi}$ and
$(\mathcal{L}_A^{-1})^*$, together with the continuous mapping
theorem, yields
\[
(\mathcal{L}_A^{-1})^*
\mathcal{T}_{\nabla\varphi}
\bigl(\sqrt{n}(\nu-\widehat{\nu}_n)\bigr)
\rightsquigarrow
(\mathcal{L}_A^{-1})^*
\mathcal{T}_{\nabla\varphi}
(\mathbb{G}_\nu)
\]
in $L^{p'}(\Omega)/\langle1\rangle$. The result now follows from
Slutsky's lemma.
\qed

\subsection{Proof of \cref{Thm:Sinkhorn}}\label{Sect:proof-EOT}
Let $g\in W^{2,\mathrm{BMO}}(\Omega)$. 
Since $\pi_\eps\in \Pi(\mu,\nu)$, we have 
$0= \int (g(x)-g(\nabla \varphi^*(y)))\pi_\eps(\rd x,\rd y)$, and a Taylor development of $g\circ \nabla \varphi^*$ around $\nabla \varphi(x)$ yields  
\begin{multline*}
    \left\vert \int \langle [\nabla^2 \varphi(x)]^{-1} \nabla g(x),
    \nabla \varphi(x) - y \rangle \pi_\eps(\rd x,\rd y) \right\vert \\
    \leq C \cdot \|g\|_{1,{\rm LogLip},\Omega}
    \int \|\nabla \varphi(x)-y\|
    \omega_{{\rm LogLip}}(\|\nabla \varphi(x)-y\|)\pi_\eps(\rd x,\rd y),
\end{multline*}
for some constant $C$ independent of $g$. Arguing as in the proof of \cref{Prop:bound-in-terms-calpha}, we get 
\begin{multline*}
    \left\vert \int \langle [\nabla^2 \varphi(x)]^{-1} \nabla g(x),
    \nabla \varphi(x) - y \rangle \pi_\eps(\rd x,\rd y) \right\vert \\
    \leq C \cdot \|g\|_{1,{\rm LogLip},\Omega}
    \|\nabla \varphi(X)-Y\|_{L^2(\mathbb{P})}
    \omega_{{\rm LogLip}}
    (\|\nabla \varphi(X)-Y\|_{L^2(\mathbb{P})}),
\end{multline*}
where $(X,Y)\sim \pi_\eps$ are random variables defined on a common probability space. As $\mu,\nu$ are supported on compact sets and have densities that are bounded from above and below, Remark~3.10 in \cite{Malamut-Maxime.SIMA.2025} shows the estimate
$\|\nabla \varphi(X)-Y\|_{L^2(\mathbb{P})}\lesssim \sqrt{\eps}$ for $\eps$ small.
As a consequence,
\[
\left\vert
\int \langle [\nabla^2 \varphi(x)]^{-1}\nabla g(x),
\nabla \varphi(x)-y\rangle\pi_\eps(\rd x,\rd y)
\right\vert
\leq
C\cdot\|g\|_{1,{\rm LogLip},\Omega}
\cdot\eps\log(1/\eps).
\]
Since
$\nabla \varphi_\eps(x)
=\int y \frac{\dd\pi_\varepsilon}{\dd\mu\otimes\nu}(x,y)\dd\nu(y)$,
we have
\begin{align*}
&\left\vert
\int
\left\langle
[\nabla^2\varphi(x)]^{-1}\nabla g(x),
\nabla\varphi(x)-\nabla\varphi_\eps(x)
\right\rangle
\mu(\rd x)
\right\vert
\\
&\qquad=
\left\vert
\int
\left\langle
[\nabla^2\varphi(x)]^{-1}\nabla g(x),
\nabla\varphi(x)-y
\right\rangle
\pi_\eps(\rd x,\rd y)
\right\vert
\\
&\qquad\leq
C\cdot\|g\|_{1,{\rm LogLip},\Omega}
\cdot\eps\log(1/\eps).
\end{align*}
This allows to conclude that
\[
\min_{a\in\R}
\|\varphi-\varphi_\eps-a\|_{L^1(\mu)}
\lesssim
\eps\log(1/\eps), 
\]
by imitating the proof of \cref{theorem:main}. Naturally, owing to symmetry, the other dual potential satisfies the same estimate
\begin{equation}
\label{eq:estimates-EOT-pot}
\min_{a\in\R}
\|\varphi^*-\psi_\eps-a\|_{L^1(\nu)}
\lesssim
\eps\log(1/\eps).
\end{equation} 
We now relate the sum of the potentials to the regularized value. Let
${\rm EOT}_\eps$ be the optimal value of the entropy-regularized
problem with entropy relative to $\mu\otimes\nu$, and let ${\rm OT}$ be
the unregularized optimal value. By the Schr\"odinger normalization \eqref{eq:EOT-system},
\[
{\rm EOT}_\eps
=
\int f_\eps\dd\mu+\int g_\eps\dd\nu.
\]
On the other hand, Kantorovich duality gives
\[
{\rm OT}
=
\int\left(\frac12\|x\|^2-\varphi(x)\right)\mu(\rd x)
+
\int\left(\frac12\|y\|^2-\varphi^*(y)\right)\nu(\rd y).
\]
Consequently, 
\begin{equation}\label{eq:value-potential-identity}
\int
\bigl[
(\varphi_\eps-\varphi)
\oplus
(\psi_\eps-\varphi^*)
\bigr]
\dd(\mu\otimes\nu)
=
-\bigl({\rm EOT}_\eps-{\rm OT}\bigr).
\end{equation}
This identity is independent of the admissible additive
normalizations of the potentials. 
We now compare our normalization with the one used in
\cite[Theorem~1]{pal2019difference}. Let
\[
p_\eps(x,y)
=
(2\pi\eps)^{-d/2}
\exp\left(-\frac{\|x-y\|^2}{2\eps}\right)
\]
and denote by
\[
K_\eps
:=
\inf_{\pi\in\Pi(\mu,\nu)}
H\left(
\pi\,\middle|\,
\mu(\rd x)p_\eps(x,y)\rd y
\right)
\]
the corresponding entropic cost. For every
$\pi\in\Pi(\mu,\nu)$, since the second marginal of $\pi$ is $\nu$,
\begin{align*}
H\left(
\pi\,\middle|\,
\mu(\rd x)p_\eps(x,y)\rd y
\right)
={}&
H(\pi\mid\mu\otimes\nu)
+
H(\nu\mid\mathcal L^d)
\\
&+
\frac{1}{2\eps}
\int\|x-y\|^2\pi(\rd x,\rd y)
+
\frac d2\log(2\pi\eps).
\end{align*}
Taking the infimum over $\pi\in\Pi(\mu,\nu)$ yields
\begin{equation}\label{eq:EOT-Pal-normalization}
{\rm EOT}_\eps
=
\eps K_\eps
-
\eps H(\nu\mid\mathcal L^d)
-
\frac d2\,\eps\log(2\pi\eps).
\end{equation}
The entropies of $\mu$ and $\nu$ relative to Lebesgue measure are
finite under our assumptions. Moreover, the quadratic-cost case of
\cite[Theorem~1]{pal2019difference} gives
\[
K_\eps-\frac{{\rm OT}}{\eps}
=
\frac12
\left\{
H(\nu\mid\mathcal L^d)
-
H(\mu\mid\mathcal L^d)
\right\}
+o(1).
\]
Combining this expansion with
\eqref{eq:EOT-Pal-normalization}, we obtain
\begin{align*}
{\rm EOT}_\eps-{\rm OT}
={}&
\frac d2\,\eps\log(1/\eps)
-\frac d2\,\eps\log(2\pi)
-
\frac{\eps}{2}
\left\{
H(\mu\mid\mathcal L^d)
+
H(\nu\mid\mathcal L^d)
\right\}
+o(\eps)
\\
=&
\frac d2\,\eps\log(1/\eps)+O(\eps).
\end{align*}
Therefore, by~\eqref{eq:value-potential-identity}, for $\eps$
sufficiently small,
\begin{equation}\label{eq:rate-eot}
\frac1C\eps\log(1/\eps)
\leq
\left|
\int
\bigl[
(\varphi_\eps-\varphi)
\oplus
(\psi_\eps-\varphi^*)
\bigr]
\dd(\mu\otimes\nu)
\right|
\leq
C\eps\log(1/\eps).
\end{equation}
The previous estimates on the two potentials give
\begin{align*}
\inf_{b\in\R}
\|(\varphi_\eps-\varphi)
\oplus
(\psi_\eps-\varphi^*)-b\|_{L^1(\mu\otimes\nu)}
&\leq
\inf_{a\in\R}
\|\varphi_\eps-\varphi-a\|_{L^1(\mu)}
+
\inf_{b\in\R}
\|\psi_\eps-\varphi^*-b\|_{L^1(\nu)}
\\
&\lesssim
\eps\log(1/\eps).
\end{align*}
Choose $b_\eps\in\R$ such that
\[
\|(\varphi_\eps-\varphi)
\oplus
(\psi_\eps-\varphi^*)-b_\eps\|_{L^1(\mu\otimes\nu)}
\leq 2 \cdot \inf_{b\in\R}
\|(\varphi_\eps-\varphi)
\oplus
(\psi_\eps-\varphi^*)-b\|_{L^1(\mu\otimes\nu)}.
\]
Then, by Jensen's inequality and \eqref{eq:rate-eot},
\begin{align*}
|b_\eps|
&\leq
\left|
\int
\bigl[
(\varphi_\eps-\varphi)
\oplus
(\psi_\eps-\varphi^*)
\bigr]
\dd(\mu\otimes\nu)
\right| +
\left|
\int
\bigl[
(\varphi_\eps-\varphi)
\oplus
(\psi_\eps-\varphi^*)-b_\eps
\bigr]
\dd(\mu\otimes\nu)
\right|
\\
&\leq
\left|
\int
\bigl[
(\varphi_\eps-\varphi)
\oplus
(\psi_\eps-\varphi^*)
\bigr]
\dd(\mu\otimes\nu)
\right| +
2 \cdot \inf_{b\in\R}
\|(\varphi_\eps-\varphi)
\oplus
(\psi_\eps-\varphi^*)-b\|_{L^1(\mu\otimes\nu)}
\\
&\lesssim
\eps\log(1/\eps).
\end{align*}
It follows that
\[
\|(\varphi_\eps-\varphi)
\oplus
(\psi_\eps-\varphi^*)\|_{L^1(\mu\otimes\nu)}
\leq
\|(\varphi_\eps-\varphi)
\oplus
(\psi_\eps-\varphi^*)-b_\eps\|_{L^1(\mu\otimes\nu)}
+
|b_\eps|
\lesssim
\eps\log(1/\eps),
\]
which is the upper bound in \cref{Thm:Sinkhorn}. Finally,
\eqref{eq:rate-eot} and Jensen's inequality give
\[
\|(\varphi_\eps-\varphi)
\oplus
(\psi_\eps-\varphi^*)\|_{L^1(\mu\otimes\nu)}
\geq
\left|
\int
\bigl[
(\varphi_\eps-\varphi)
\oplus
(\psi_\eps-\varphi^*)
\bigr]
\dd(\mu\otimes\nu)
\right|
\gtrsim
\eps\log(1/\eps),
\]
which proves the lower bound.
\bibliographystyle{imsart-nameyear}
\bibliography{ref}

@article{gonzalez2026influence,
  title={The Influence Function of Transport-based Quantiles},
  author={Gonz{\'a}lez-Sanz, Alberto and Sheng, Shunan and Wu, Bohan and Medina, Marco Avella},
  journal={arXiv preprint arXiv:2607.19080},
  year={2026}
}

@article{letrouit2024gluing,
  title={Gluing methods for quantitative stability of optimal transport maps},
  author={Letrouit, Cyril and M{\'e}rigot, Quentin},
  journal={arXiv preprint arXiv:2411.04908},
  year={2024}
}

@article {pal2019difference,
    AUTHOR = {Pal, Soumik},
     TITLE = {On the difference between entropic cost and the optimal
              transport cost},
   JOURNAL = {Ann. Appl. Probab.},
  FJOURNAL = {The Annals of Applied Probability},
    VOLUME = {34},
      YEAR = {2024},
    NUMBER = {1B},
     PAGES = {1003--1028},
      ISSN = {1050-5164,2168-8737},
   MRCLASS = {49Q22 (46N10 60F10 91G10 94A17)},
  MRNUMBER = {4700251},
       DOI = {10.1214/23-aap1983},
       URL = {https://doi.org/10.1214/23-aap1983},
}

@article{Goldfeld.et.al.2024.EJS,
  title = {Limit theorems for entropic optimal transport maps and Sinkhorn divergence},
  volume = {18},
  journal = {Electron. J. Statist. },
  author = {Goldfeld,  Ziv and Kato,  Kengo and Rioux,  Gabriel and Sadhu,  Ritwik},
  year = {2024},
  pages={980-1041},
}

@article{GalichonSalanie2022,
  author  = {Galichon, Alfred and Salani{\'e}, Bernard},
  title   = {Cupid's Invisible Hand: Social Surplus and Identification in Matching Models},
  journal = {The Review of Economic Studies},
  year    = {2022},
  volume  = {89},
  number  = {5},
  pages   = {2600--2629}
}

@article{ChiapporiMcCannNesheim2010,
  author  = {Chiappori, Pierre-Andr{\'e} and McCann, Robert J. and Nesheim, Lars P.},
  title   = {Hedonic Price Equilibria, Stable Matching, and Optimal Transport: Equivalence, Topology, and Uniqueness},
  journal = {Economic Theory},
  year    = {2010},
  volume  = {42},
  number  = {2},
  pages   = {317--354},
  doi     = {10.1007/s00199-009-0455-z}
}

@article{EkelandGalichonHenry2012,
  author  = {Ekeland, Ivar and Galichon, Alfred and Henry, Marc},
  title   = {Comonotonic Measures of Multivariate Risks},
  journal = {Mathematical Finance},
  year    = {2012},
  volume  = {22},
  number  = {1},
  pages   = {109--132},
  doi     = {10.1111/j.1467-9965.2010.00453.x}
}

@article{BercuBigotThurin2024,
  author  = {Bercu, Bernard and Bigot, J{\'e}r{\'e}mie and Thurin, Gauthier},
  title   = {{M}onge--{K}antorovich Superquantiles and Expected Shortfalls with Applications to Multivariate Risk Measurements},
  journal = {Electronic Journal of Statistics},
  year    = {2024},
  volume  = {18},
  number  = {2},
  pages   = {3461--3496},
  doi     = {10.1214/24-EJS2279}
}

@book{Galichon2016,
  author    = {Galichon, Alfred},
  title     = {Optimal Transport Methods in Economics},
  publisher = {Princeton University Press},
  address   = {Princeton, NJ},
  year      = {2016}
}

@inproceedings{MakkuvaTaghvaeiOhLee2020,
  author    = {Makkuva, Ashok Vardhan and Taghvaei, Amirhossein and Oh, Sewoong and Lee, Jason D.},
  title     = {Optimal Transport Mapping via Input Convex Neural Networks},
  booktitle = {Proceedings of the 37th International Conference on Machine Learning},
  series    = {Proceedings of Machine Learning Research},
  volume    = {119},
  pages     = {6672--6681},
  year      = {2020},
  url       = {https://proceedings.mlr.press/v119/makkuva20a.html}
}

@inproceedings{PatyDAspremontCuturi2020,
  author    = {Paty, Fran{\c{c}}ois-Pierre and d'Aspremont, Alexandre and Cuturi, Marco},
  title     = {Regularity as Regularization: Smooth and Strongly Convex {B}renier Potentials in Optimal Transport},
  booktitle = {Proceedings of the Twenty Third International Conference on Artificial Intelligence and Statistics},
  series    = {Proceedings of Machine Learning Research},
  volume    = {108},
  pages     = {1222--1232},
  year      = {2020},
  url       = {https://proceedings.mlr.press/v108/paty20a.html}
}

@article{Brenier1991,
  author  = {Brenier, Yann},
  title   = {Polar Factorization and Monotone Rearrangement of Vector-Valued Functions},
  journal = {Communications on Pure and Applied Mathematics},
  year    = {1991},
  volume  = {44},
  number  = {4},
  pages   = {375--417},
  doi     = {10.1002/cpa.3160440402}
}

@article{BenamouBrenier1998,
  author  = {Benamou, Jean-David and Brenier, Yann},
  title   = {Weak Existence for the Semigeostrophic Equations Formulated as a Coupled {M}onge--{A}mp{\`e}re/Transport Problem},
  journal = {SIAM Journal on Applied Mathematics},
  year    = {1998},
  volume  = {58},
  number  = {5},
  pages   = {1450--1461},
  doi     = {10.1137/S0036139995294111}
}

@article{BourneEganPelloniWilkinson2022,
  author  = {Bourne, David P. and Egan, Charlie P. and Pelloni, Beatrice and Wilkinson, Mark},
  title   = {Semi-Discrete Optimal Transport Methods for the Semi-Geostrophic Equations},
  journal = {Calculus of Variations and Partial Differential Equations},
  year    = {2022},
  volume  = {61},
  number  = {1},
  pages   = {39},
  doi     = {10.1007/s00526-021-02133-z}
}

@article{LevyMohayaeeVonHausegger2021,
  author  = {L{\'e}vy, Bruno and Mohayaee, Roya and von Hausegger, Sebastian},
  title   = {A Fast Semidiscrete Optimal Transport Algorithm for a Unique Reconstruction of the Early Universe},
  journal = {Monthly Notices of the Royal Astronomical Society},
  year    = {2021},
  volume  = {506},
  number  = {1},
  pages   = {1165--1185},
  doi     = {10.1093/mnras/stab1676}
}

@article{NikakhtarPadmanabhanLevyShethMohayaee2023,
  author  = {Nikakhtar, Farnik and Padmanabhan, Nikhil and L{\'e}vy, Bruno and Sheth, Ravi K. and Mohayaee, Roya},
  title   = {Optimal Transport Reconstruction of Biased Tracers in Redshift Space},
  journal = {Physical Review D},
  year    = {2023},
  volume  = {108},
  number  = {8},
  pages   = {083534},
  doi     = {10.1103/PhysRevD.108.083534}
}

@article{delBarrioLoubes2019,
  author  = {del Barrio, Eustasio and Loubes, Jean-Michel},
  title   = {Central Limit Theorems for Empirical Transportation Cost in General Dimension},
  journal = {The Annals of Probability},
  year    = {2019},
  volume  = {47},
  number  = {2},
  pages   = {926--951},
  doi     = {10.1214/18-AOP1275}
}

@inproceedings{MenaNilesWeed2019,
  author    = {Mena, Gonzalo and Niles-Weed, Jonathan},
  title     = {Statistical Bounds for Entropic Optimal Transport: Sample Complexity and the Central Limit Theorem},
  booktitle = {Advances in Neural Information Processing Systems},
  volume    = {32},
  year      = {2019}
}

@article{HutterRigollet2021,
  author  = {H{\"u}tter, Jan-Christian and Rigollet, Philippe},
  title   = {Minimax Estimation of Smooth Optimal Transport Maps},
  journal = {The Annals of Statistics},
  year    = {2021},
  volume  = {49},
  number  = {2},
  pages   = {1166--1194}
}

@book{Kosorok.2008.book,
  author    = {Kosorok, Michael R.},
  title     = {Introduction to Empirical Processes and Semiparametric Inference},
  series    = {Springer Series in Statistics},
  publisher = {Springer},
  address   = {New York},
  year      = {2008},
  doi       = {10.1007/978-0-387-74978-5}
}

@unpublished{ManoleBalakrishnanNilesWeedWasserman2023clt,
  author = {Manole, Tudor and Balakrishnan, Sivaraman and Niles-Weed, Jonathan and Wasserman, Larry},
  title  = {Central Limit Theorems for Smooth Optimal Transport Maps},
  note   = {arXiv:2312.12407},
  year   = {2023}
}

@article{HallinMordant2025,
  author  = {Hallin, Marc and Mordant, Gilles},
  title   = {Multiple-Attribute {L}orenz Functions and {G}ini Indices: A Measure Transportation Approach},
  journal = {Journal of Business \& Economic Statistics},
  year    = {2025},
  volume  = {43},
  number  = {4},
  pages   = {1092--1104},
  doi     = {10.1080/07350015.2025.2475964}
}

@article{SommerfeldMunk2018,
  title = {Inference for Empirical Wasserstein Distances on Finite Spaces},
  volume = {80},
  number = {1},
  journal = {Journal of the Royal Statistical Society Series B: Statistical Methodology},
  author = {Sommerfeld,  Max and Munk,  Axel},
  year = {2018},
  pages = {219–238}
}

@article {Fefferman2009Extension,
    AUTHOR = {Fefferman, Charles},
     TITLE = {Extension of {$C^{m,\omega}$}-smooth functions by linear
              operators},
   JOURNAL = {Rev. Mat. Iberoam.},
  FJOURNAL = {Revista Matem\'atica Iberoamericana},
    VOLUME = {25},
      YEAR = {2009},
    NUMBER = {1},
     PAGES = {1--48},
      ISSN = {0213-2230,2235-0616},
   MRCLASS = {46E15 (26E10 41A05)},
  MRNUMBER = {2514337},
MRREVIEWER = {M.\ Laczkovich},
       DOI = {10.4171/RMI/568},
       URL = {https://doi.org/10.4171/RMI/568},
}

@book{peyre2019computational,
  title={Computational optimal transport: With applications to data science},
  author={Peyr{\'e}, Gabriel and Cuturi, Marco},
  year={2019},
  publisher={Now Foundations and Trends}
}

@article{PanaretosZemel2019,
  author  = {Panaretos, Victor M. and Zemel, Yoav},
  title   = {Statistical Aspects of {W}asserstein Distances},
  journal = {Annual Review of Statistics and Its Application},
  year    = {2019},
  volume  = {6},
  pages   = {405--431},
  doi     = {10.1146/annurev-statistics-030718-104938}
}

@article {JonathanFrancis2019,
    AUTHOR = {Weed, Jonathan and Bach, Francis},
     TITLE = {Sharp asymptotic and finite-sample rates of convergence of
              empirical measures in {W}asserstein distance},
   JOURNAL = {Bernoulli},
  FJOURNAL = {Bernoulli. Official Journal of the Bernoulli Society for
              Mathematical Statistics and Probability},
    VOLUME = {25},
      YEAR = {2019},
    NUMBER = {4A},
     PAGES = {2620--2648},
      ISSN = {1350-7265,1573-9759},
   MRCLASS = {60B10 (62G30)},
  MRNUMBER = {4003560},
MRREVIEWER = {Aihua\ Xia},
       DOI = {10.3150/18-BEJ1065},
       URL = {https://doi.org/10.3150/18-BEJ1065},
}

@article {Dudley1968,
    AUTHOR = {Dudley, R. M.},
     TITLE = {The speed of mean {G}livenko-{C}antelli convergence},
   JOURNAL = {Ann. Math. Statist.},
  FJOURNAL = {Annals of Mathematical Statistics},
    VOLUME = {40},
      YEAR = {1968},
     PAGES = {40--50},
      ISSN = {0003-4851},
   MRCLASS = {60.30},
  MRNUMBER = {236977},
MRREVIEWER = {G.\ Schay},
       DOI = {10.1214/aoms/1177697802},
       URL = {https://doi.org/10.1214/aoms/1177697802},
}

@article{kitagawa2025stability,
  title={Stability of optimal transport maps on Riemannian manifolds},
  author={Kitagawa, Jun and Letrouit, Cyril and M{\'e}rigot, Quentin},
  journal={arXiv preprint arXiv:2504.05412},
  year={2025}
}

@article{luxburg2004distance,
  title={Distance-based classification with Lipschitz functions},
  author={Luxburg, Ulrike von and Bousquet, Olivier},
  journal={Journal of Machine Learning Research},
  volume={5},
  number={Jun},
  pages={669--695},
  year={2004}
}

@book{brezis2011functional,
  title={Functional analysis, Sobolev spaces and partial differential equations},
  author={Br{\'e}zis, Haim},
  volume={2},
  year={2011},
  publisher={Springer}
}

@article {JohnNirenberg1961,
    AUTHOR = {John, F. and Nirenberg, L.},
     TITLE = {On functions of bounded mean oscillation},
   JOURNAL = {Comm. Pure Appl. Math.},
  FJOURNAL = {Communications on Pure and Applied Mathematics},
    VOLUME = {14},
      YEAR = {1961},
     PAGES = {415--426},
      ISSN = {0010-3640,1097-0312},
   MRCLASS = {26.00},
  MRNUMBER = {131498},
MRREVIEWER = {L.\ C.\ Young},
       DOI = {10.1002/cpa.3160140317},
       URL = {https://doi.org/10.1002/cpa.3160140317},
}

@article{Jones1980BMO,
  author  = {Jones, Peter W.},
  title   = {Extension Theorems for {BMO}},
  journal = {Indiana University Mathematics Journal},
  year    = {1980},
  volume  = {29},
  number  = {1},
  pages   = {41--66}
}

@book {HanLinPDE2011Notes,
    AUTHOR = {Han, Qing and Lin, Fanghua},
     TITLE = {Elliptic partial differential equations},
    SERIES = {Courant Lecture Notes in Mathematics},
    VOLUME = {1},
   EDITION = {Second},
 PUBLISHER = {Courant Institute of Mathematical Sciences, New York; American
              Mathematical Society, Providence, RI},
      YEAR = {2011},
     PAGES = {x+147},
      ISBN = {978-0-8218-5313-9},
   MRCLASS = {35Jxx (35-01 35B50)},
  MRNUMBER = {2777537},
}

@article {DelalandeMerigot2023,
    AUTHOR = {Delalande, Alex and M\'erigot, Quentin},
     TITLE = {Quantitative stability of optimal transport maps under
              variations of the target measure},
   JOURNAL = {Duke Math. J.},
  FJOURNAL = {Duke Mathematical Journal},
    VOLUME = {172},
      YEAR = {2023},
    NUMBER = {17},
     PAGES = {3321--3357},
      ISSN = {0012-7094,1547-7398},
   MRCLASS = {49Q22 (30L99 49K40)},
  MRNUMBER = {4688680},
MRREVIEWER = {Lukas\ Koch},
       DOI = {10.1215/00127094-2022-0106},
       URL = {https://doi.org/10.1215/00127094-2022-0106},
}

@book{wainwright2019high,
  title={High-dimensional statistics: A non-asymptotic viewpoint},
  author={Wainwright, Martin J},
  volume={48},
  year={2019},
  publisher={Cambridge university press}
}

@article{del2025distributional,
  title={Distributional limit theory for optimal transport},
  author={{del Barrio}, Eustasio and Gonz{\'a}lez-Sanz, Alberto and Loubes, Jean-Michel and Rodr{\'\i}guez-V{\'\i}tores, David},
  journal={arXiv preprint arXiv:2505.19104},
  year={2026}
}

@incollection{van1996weak,
  title={Weak convergence},
  author={{van der Vaart}, Aad W and Wellner, Jon A},
  booktitle={Weak convergence and empirical processes: with applications to statistics},
  pages={16--28},
  year={1996},
  publisher={Springer}
}

@book{miranda1970partial,
  title={Partial Differential Equations of Elliptic Type},
  author={Miranda, Carlo},
  edition={2nd Revised},
  year={1970},
  publisher={
Springer-Verlag
},
  address={Berlin, Heidelberg, New York},
  translator={Motteler, Zane C.},
  series={Ergebnisse der Mathematik und ihrer Grenzgebiete},
  volume={2},
  isbn={978-3-540-04804-6}
}

@book {GilbargTrudinger.Book,
    AUTHOR = {Gilbarg, David and Trudinger, Neil S.},
     TITLE = {Elliptic partial differential equations of second order},
    SERIES = {Grundlehren der mathematischen Wissenschaften [Fundamental
              Principles of Mathematical Sciences]},
    VOLUME = {224},
   EDITION = {Second},
 PUBLISHER = {Springer-Verlag, Berlin},
      YEAR = {1983},
     PAGES = {xiii+513},
      ISBN = {3-540-13025-X},
   MRCLASS = {35Jxx (35-01)},
  MRNUMBER = {737190},
MRREVIEWER = {O.\ John},
       DOI = {10.1007/978-3-642-61798-0},
       URL = {https://doi.org/10.1007/978-3-642-61798-0},
}

@book{Taira.2024.book,
  title = {Real Analysis Methods for Markov Processes: Singular Integrals and Feller Semigroups},
  ISBN = {9789819736591},
  url = {http://dx.doi.org/10.1007/978-981-97-3659-1},
  DOI = {10.1007/978-981-97-3659-1},
  publisher = {Springer Nature Singapore},
  author = {Taira,  Kazuaki},
  year = {2024}
}

@article{cianchi1996continuity,
  title={Continuity properties of functions from Orlicz-Sobolev spaces and embedding theorems},
  author={Cianchi, Andrea},
  journal={Annali della Scuola Normale Superiore di Pisa-Classe di Scienze},
  volume={23},
  number={3},
  pages={575--608},
  year={1996}
}

@article {Manole.et.al.AoS.2024,
    AUTHOR = {Manole, Tudor and Balakrishnan, Sivaraman and Niles-Weed,
              Jonathan and Wasserman, Larry},
     TITLE = {Plugin estimation of smooth optimal transport maps},
   JOURNAL = {Ann. Statist.},
  FJOURNAL = {The Annals of Statistics},
    VOLUME = {52},
      YEAR = {2024},
    NUMBER = {3},
     PAGES = {966--998},
      ISSN = {0090-5364,2168-8966},
   MRCLASS = {62G05 (49Q22 62C20 62G07 62G20)},
  MRNUMBER = {4784066},
MRREVIEWER = {Dimitris\ Vartziotis},
       DOI = {10.1214/24-aos2379},
       URL = {https://doi.org/10.1214/24-aos2379},
}

@article {Malamut-Maxime.SIMA.2025,
    AUTHOR = {Malamut, Hugo and Sylvestre, Maxime},
     TITLE = {Convergence rates of the regularized optimal transport:
              disentangling suboptimality and entropy},
   JOURNAL = {SIAM J. Math. Anal.},
  FJOURNAL = {SIAM Journal on Mathematical Analysis},
    VOLUME = {57},
      YEAR = {2025},
    NUMBER = {3},
     PAGES = {2533--2558},
      ISSN = {0036-1410,1095-7154},
   MRCLASS = {49Q22 (49K40 94A17)},
  MRNUMBER = {4907179},
MRREVIEWER = {Lukas\ Koch},
       DOI = {10.1137/23M1591554},
       URL = {https://doi.org/10.1137/23M1591554},
}

@article{delBarrioGonzalezSanzLoubes2024,
  author  = {del Barrio, Eustasio and Gonz{\'a}lez-Sanz, Alberto and Loubes, Jean-Michel},
  title   = {Central limit theorems for semidiscrete {W}asserstein distances},
  journal = {Bernoulli},
  year    = {2024},
  volume  = {30},
  number  = {1},
  pages   = {554--580},
  doi     = {10.3150/23-BEJ1608}
}

@misc{MischlerTrevisan2024,
  author        = {Mischler, Octave and Trevisan, Dario},
  title         = {Quantitative stability in optimal transport for general power costs},
  year          = {2024},
  eprint        = {2407.19337},
  archivePrefix = {arXiv},
  primaryClass  = {math.OC}
}

@article{NutzWiesel2022,
  author  = {Nutz, Marcel and Wiesel, Johannes},
  title   = {Entropic optimal transport: convergence of potentials},
  journal = {Probability Theory and Related Fields},
  year    = {2022},
  volume  = {184},
  pages   = {401--424},
  doi     = {10.1007/s00440-021-01096-8}
}

@misc{GonzalezSanzLoubesNilesWeed2022,
  author        = {Gonz{\'a}lez-Sanz, Alberto and Loubes, Jean-Michel and Niles-Weed, Jonathan},
  title         = {Weak limits of entropy regularized optimal transport: potentials, plans and divergences},
  year          = {2022},
  eprint        = {2207.07427},
  archivePrefix = {arXiv},
  primaryClass  = {math.PR}
}

@unpublished{Mordant2024Entropic,
  author = {Mordant, Gilles},
  title  = {The Entropic Optimal (Self-)Transport Problem: Limit Distributions for Decreasing Regularization with Application to Score Function Estimation},
  note   = {arXiv:2412.12007, revised April 2026},
  year   = {2024}
}

@article{FreesValdez1998,
  author  = {Frees, Edward W. and Valdez, Emiliano A.},
  title   = {Understanding Relationships Using Copulas},
  journal = {North American Actuarial Journal},
  year    = {1998},
  volume  = {2},
  number  = {1},
  pages   = {1--25},
  doi     = {10.1080/10920277.1998.10595667}
}

@misc{CazellesPauwelsPortales2026,
  author        = {Cazelles, Elsa and Pauwels, Edouard and Portales, L{\'e}o},
  title         = {Statistical Estimation of {M}onge Transport Maps via {B}renier Potentials},
  year          = {2026},
  eprint        = {2604.22366},
  archivePrefix = {arXiv},
  primaryClass  = {math.OC}
}

@misc{CajaLopezDelgadinoKitagawa2026,
  author        = {Caja-Lopez, F.-U. and Delgadino, Matias G. and Kitagawa, Jun},
  title         = {Stability of optimal transport maps and second variation of the 2-{M}onge--{K}antorovich distance},
  year          = {2026},
  eprint        = {2605.24232},
  archivePrefix = {arXiv},
  primaryClass  = {math.AP}
}

@misc{LopezRivera2026,
  author        = {L{\'o}pez-Rivera, Pablo},
  title         = {A uniform rate of convergence for the entropic potentials in the quadratic {E}uclidean setting},
  year          = {2026},
  note          = {To appear in ESAIM: Control, Optimisation and Calculus of Variations},
  eprint        = {2502.00084},
  archivePrefix = {arXiv},
  primaryClass  = {math.CA}
}

\newpage
\appendix

\section{Numerical details for the risk application}
\label{app:risk-application}

For a target measure $\sum_j b_j\delta_{Y_j}$, we minimize the
continuous-source semidual
\[
 v\longmapsto
 \mathbb E_{U\sim\mu}\max_j
 \left\{\langle U,Y_j\rangle-\frac12\|Y_j\|^2+v_j\right\}
 -\sum_jb_jv_j
\]
by averaged stochastic gradient descent.  Each fit uses $10n=14{,}660$
independent draws from the uniform disk and the step size
$0.01/\sqrt t$.  The same disk draw is used for the original fit and all
bootstrap fits.  A $2{,}400$-point discrete solve supplies only the initial
dual weights; the reported potentials are the averaged iterates for the
continuous semidual.  The two duplicated claim vectors are consolidated and
given their combined empirical weights.

We use seed $20260726$ for $B=1{,}000$ ordinary bootstrap samples of size
$n=1{,}466$.  Repeated bootstrap observations are represented by their exact
multinomial weights.  Contrasts are evaluated independently of the SGD draw,
using $32{,}768$ quasi-Monte Carlo points in each of $\Omega_a$ and $\Omega_b$
for every bootstrap fit and $131{,}072$ points per region for the original
fit.  The algorithm has a fixed draw budget rather than a stopping tolerance.
All $1{,}000$ requested fits returned finite values; no run was discarded or
replaced.

The basic interval uses the empirical $0.025$ and $0.975$ quantiles of
$\sqrt n(\widehat\theta_n^*-\widehat\theta_n)$.  Across the first
$250,500,750$, and $1{,}000$ bootstrap draws, the estimated root-$n$ standard
deviations were $0.746$, $0.728$, $0.741$, and $0.735$, respectively.  Over
twenty bootstrap fits repeated with an independent disk draw, the largest
absolute change in the contrast was $4.5\times10^{-4}$ and the root mean
squared change was $1.9\times10^{-4}$.  Finally, using between $8{,}192$ and
$131{,}072$ evaluation points per region kept the original-sample estimate in
$[1.52861,1.52866]$.  The code, complete bootstrap output, software versions,
and data checksum accompany the paper.

\section{Elliptic equations}
\label{sec: Elliptic}
The present section and~\cref{sec:green-function} are devoted to establishing the BMO estimates; see~\cref{theorem-Solvable}. We denote by $|\cdot|$ the Lebesgue measure. Let $\Omega\subset \mathbb R^d$ be a bounded domain with $\mathcal{C}^{2,\alpha}$ boundary $\partial\Omega$, and let $\mathbf{n}_\Omega$ denote the outward unit normal vector field on $\partial\Omega$. We consider the divergence-form operator
\[
    \mathcal{L}_A u := \operatorname{div}(A\nabla u),
\]
together with the boundary operator
\[
    \mathcal{B}_A u := \langle A\nabla u,\mathbf{n}_\Omega\rangle .
\]

\begin{assumption}
\label{ass:RegPDE}
The matrix-valued coefficient $A:\Omega\to\mathbb R^{d\times d}$ satisfies the following conditions.
\begin{enumerate}
    \item \emph{Ellipticity.} The matrix $A(x)$ is symmetric and positive definite for every $x\in\Omega$. Moreover, there exists $\lambda_A\in(0,1]$ such that
    \[
        \lambda_A \|\xi\|^2
        \leq
        \langle A(x)\xi,\xi\rangle
        \leq
        \lambda_A^{-1}\|\xi\|^2,
        \qquad x\in\Omega,\ \xi\in\mathbb R^d.
    \]
    \item \emph{Regularity of coefficients.} The entries of $A$ belong to $\mathcal{C}^{1,\alpha}(\Omega)$.
\end{enumerate}
\end{assumption}

Under~\cref{ass:RegPDE}, the operator $\cL_A$ is uniformly elliptic, and the boundary condition is uniformly oblique. In our setting, we take
\[
    A=\mu[\nabla^2\varphi]^{-1},
\]
where we identify $\mu$ with its density. Under the assumptions of~\cref{theorem:main}, this density is bounded from below by $\lambda$ and from above by $\|\mu\|_{1,\alpha,\Omega}$. Moreover, there exists a constant $C_0>0$ such that $\frac{1}{C_0}I_d \preceq \nabla^2\varphi \preceq C_0 I_d$ on $\Omega$.
Consequently, $A$ satisfies the uniform ellipticity condition in~\cref{ass:RegPDE}, with ellipticity constant $\lambda_A$ depending only on
$\lambda$, $\|\mu\|_{1,\alpha,\Omega}$, and $C_0$.
The main theorem of this section is the following.

\begin{theorem}\label{theorem-Solvable}
Under~\cref{ass:RegPDE}, for any $f\in L_0^\infty(\Omega)$, there exists a unique solution
$[u]\in\mathcal{X}=W^{2,BMO}(\Omega)/\langle1\rangle$ of 
\begin{equation}
\label{eq:Elliptic-system}
\begin{cases}
    \mathcal{L}_A(u) = f & {\rm in}\ \Omega,\\
    \mathcal{B}_A(u) = 0 &{\rm on}\ \partial \Omega.
\end{cases}
\end{equation}
Furthermore, there exists a constant $C>0$, independent of $f$, such that
\[
    \|[u]\|_{\mathcal{X}}
    \leq
    C\|f\|_{\infty,\Omega}.
\] 
\end{theorem}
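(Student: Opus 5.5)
The plan is to split the statement into an existence--uniqueness part, which is standard $L^p$ theory, and an endpoint estimate, which is the real content. For existence and uniqueness I would use \cref{theorem:solvable-lp}. Since $f\in L^\infty_0(\Omega)\subset L^p_0(\Omega)$ for every $p<\infty$, the Neumann problem \eqref{eq:Elliptic-system} has a unique solution $[u]\in W^{2,p}(\Omega)/\langle1\rangle$. Normalize it by $\int_\Omega u=0$. This gives
\[
\|u\|_{W^{2,p}(\Omega)}\leq C_p\|f\|_{L^p}\lesssim C_p\|f\|_{\infty,\Omega},
\]
and in particular the $W^{2,2}$ part of the $\mathcal X$-norm is controlled. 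Uniqueness in $\mathcal X$ follows from uniqueness in $W^{2,2}/\langle1\rangle$, which is the usual energy argument: the compatibility condition $\int_\Omega f=0$ together with connectedness forces the kernel to consist of constants. It then remains to prove
\[
\|\partial_{ij}u\|_{\mathrm{BMO}(\Omega)}\lesssim\|f\|_\infty
\]
with a constant that does not blow up, which is where $C_p\sim p$ cannot be used directly.

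For the BMO bound I would use a Green-function representation together with Calderón--Zygmund-type kernel estimates. This is why \cref{sec:green-function} exists. Write
\[
u(x)=\int_\Omega G(x,y)f(y)\dd y,
\]
where $G$ is the Neumann Green function for $(\mathcal L_A,\mathcal B_A)$ with the zero-mean normalization. Since the coefficients are $\mathcal C^{1,\alpha}$ and $\partial\Omega\in\mathcal C^{2,\alpha}$, the goal is to establish the standard bounds
\[
|\nabla_x^2G(x,y)|\lesssim|x-y|^{-d},\qquad
|\nabla_x^2G(x,y)-\nabla_x^2G(x',y)|\lesssim|x-x'|^{\alpha}|x-y|^{-d-\alpha}\quad\text{for }|x-x'|\leq|x-y|/2.
\]
These hold uniformly up to the boundary. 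To obtain them, I would freeze coefficients and flatten the boundary locally via $\mathcal C^{2,\alpha}$ charts. After reflection, the frozen problem is a constant-coefficient oblique problem in a half-space, whose kernel is explicit. The remainder is treated perturbatively with Schauder estimates.

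Given the kernel bounds, I would run the classical BMO argument on relative balls $D_{x_0,r}=B_r(x_0)\cap\Omega$. Split $f=f\mathbbm 1_{D_{x_0,2r}}+f\mathbbm 1_{\Omega\setminus D_{x_0,2r}}$; each piece no longer has mean zero, so correct it by a constant to restore compatibility. The local piece is bounded in $L^2$ using the $W^{2,2}$ estimate, which gives
\[
\frac{1}{|D_{x_0,r}|}\int_{D_{x_0,r}}|\partial_{ij}u_1|\lesssim\Bigl(\frac{1}{|D_{x_0,r}|}\int|\partial_{ij}u_1|^2\Bigr)^{1/2}\lesssim\|f\|_\infty .
\]
This step uses that $|D_{x_0,2r}|\sim r^d$, which holds for $\mathcal C^{2,\alpha}$ domains, and that the correcting constant is harmless. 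For the far piece, the Hölder regularity of $\nabla^2_xG$ in $x$ yields
\[
|\partial_{ij}u_2(x)-\partial_{ij}u_2(x_0)|\lesssim\|f\|_\infty\int_{|y-x_0|>2r}r^\alpha|x_0-y|^{-d-\alpha}\dd y\lesssim\|f\|_\infty
\]
for $x\in D_{x_0,r}$, so its oscillation is bounded. Combining the two pieces gives $\|\partial_{ij}u\|_{\mathrm{BMO}(\Omega)}\lesssim\|f\|_\infty$, and hence $\|[u]\|_{\mathcal X}\leq C\|f\|_{\infty,\Omega}$.

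The main obstacle is the boundary Green-function estimate: second derivatives in $x$ with a Hölder modulus, uniformly for $x$ near $\partial\Omega$ under the oblique conormal condition. If the perturbative construction becomes unwieldy, I would instead try an alternative route. That route first proves the bound for smooth $f$ and then derives the BMO estimate from the uniform-in-$p$ form of the $W^{2,p}$ constant, $C_p\lesssim p$, obtained by localizing the Calderón--Zygmund estimate. It then uses the John--Nirenberg characterization, by which $L^p$ bounds that grow linearly in $p$ control BMO. However, that characterization is only one-directional, so the kernel approach is the one to commit to.
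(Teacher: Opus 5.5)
Your proposal is correct and follows essentially the same route as the paper. The shared steps are existence and uniqueness from the $W^{2,p}$ theory, the Green-function representation (with $\int_\Omega G(x,y)\,\mathrm{d}y=0$ absorbing the compatibility constant), the flattened, reflected frozen-coefficient comparison for the kernel, and the split into a local $L^2$ estimate and a far-field kernel-difference estimate for the BMO bound. The one nuance is that the paper's kernel estimate (\cref{lemma:global-estimate-green-function}) has the form $\|x-x_0\|\,\|x-y\|^{-d-1}+\|x-y\|^{\alpha-d}+\|x_0-y\|^{\alpha-d}$ instead of your pure H\"older modulus; this form only requires sup bounds on second derivatives of the comparison remainder, and it suffices equally after integration.
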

When $d=1$, the result follows directly. Indeed, $\Omega=(a,b)$ and $A$ is scalar. Taking the mean-zero representative, the equation and the Neumann condition give
\[
    A(x)u'(x)=\int_a^x f(t)\dd t,
\]
where the boundary condition at $b$ follows from $\int_a^b f=0$. Consequently,
\[
    u''(x)
    =
    \frac{f(x)}{A(x)}
    -
    \frac{A'(x)}{A(x)^2}\int_a^x f(t)\dd t.
\]
Uniform ellipticity and $A\in\cC^1(\overline\Omega)$ therefore yield
\[
    \|u'\|_{\infty,\Omega}
    +
    \|u''\|_{\infty,\Omega}
    \lesssim
    \|f\|_{\infty,\Omega}.
\]
Together with the mean-zero normalization, this is stronger than the claimed
$W^{2,BMO}$ estimate. In the Green-function construction below, we may therefore assume
that $d\geq2$.

\begin{remark}[Bibliographical remarks]
The solvability theory for uniformly elliptic equations with Neumann or oblique
boundary conditions is classical. For $1<p<\infty$, estimates of the form
$L^p_0(\Omega)\to W^{2,p}(\Omega)/\langle 1\rangle$ go back to the Schauder and
Calderón--Zygmund theories, together with the regularity theory for oblique
derivative problems; see, for instance, \citep{miranda1970partial,GilbargTrudinger.Book}
and the modern presentation in \citep[Section~16]{Taira.2024.book}.

When $g\in L_0^\infty(\Omega)$, one should not in general expect second
derivatives to be bounded. The natural replacement at this endpoint is the space
$BMO$, introduced by John and Nirenberg \citep{JohnNirenberg1961}. Thus
\Cref{theorem-Solvable} can be viewed as the Neumann analogue of the
Calderón--Zygmund estimate: bounded data yield second derivatives of bounded mean
oscillation (BMO), with uniqueness understood modulo constants because of the Neumann
boundary condition. This BMO estimate is precisely what is needed below,
since $W^{2,BMO}(\Omega)$ embeds into $\cC^{1,{\rm LogLip}}(\Omega)$ 
by~\cref{lemma:BMO-inclusion}.
\end{remark}

\begin{corollary}
Under the assumptions of \cref{theorem-Solvable}, let $u\in \mathcal{X}$ be the unique solution of \eqref{eq:Elliptic-system}. Then
there exists a constant $C>0$, independent of $f$, such that
\[
    \|\nabla u(x)-\nabla u(x')\|
    \leq
    C\|f\|_{\infty,\Omega}
    \|x-x'\|
    \bigl(1+|\log(\|x-x'\|)|\bigr).
\] 
\end{corollary}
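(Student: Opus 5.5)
The corollary follows by composing \cref{theorem-Solvable} with the embedding $W^{2,BMO}(\Omega)\hookrightarrow\cC^{1,\mathrm{LogLip}}(\Omega)$ of \cref{lemma:BMO-inclusion}. Both sides of the claimed inequality involve only $\nabla u$, which does not change when a constant is added to $u$. So the estimate is well defined on the quotient $\mathcal X$, and we may work with any convenient representative.

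\emph{Step 1: pick a representative.} Let $[u]\in\mathcal X$ be the solution given by \cref{theorem-Solvable}. Since $\|[u]\|_{\mathcal X}$ is an infimum over constants, there is a representative $u$ with
\[
\|u\|_{W^{2,BMO}(\Omega)}\leq 2\|[u]\|_{\mathcal X}\leq 2C\|f\|_{\infty,\Omega}.
\]
Alternatively, take the mean-zero representative and use Poincar\'e's inequality in $W^{2,2}$ to compare its norm with the quotient norm.

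\emph{Step 2: embed.} \cref{lemma:BMO-inclusion} gives $\|u\|_{1,\mathrm{LogLip},\Omega}\lesssim\|u\|_{W^{2,BMO}(\Omega)}\lesssim\|f\|_{\infty,\Omega}$. By the definition of the $\cC^{1,\mathrm{LogLip}}$ norm, this yields
\[
\|\nabla u(x)-\nabla u(x')\|\leq C\|f\|_{\infty,\Omega}\,\|x-x'\|\bigl(1+|\log\|x-x'\||\bigr)
\]
for all $x,x'\in\Omega$ with $0<\|x-x'\|\leq 1$.

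\emph{Step 3: pairs with $\|x-x'\|>1$.} These are not covered by the seminorm in the definition, so we use the sup bound instead:
\[
\|\nabla u(x)-\nabla u(x')\|\leq 2\|\nabla u\|_{\infty,\Omega}\lesssim\|f\|_{\infty,\Omega}.
\]
When $\|x-x'\|\geq1$ we have $\|x-x'\|(1+|\log\|x-x'\||)\geq 1$, so this is again dominated by the right-hand side. The constant depends only on $\Omega$ through its bounded diameter.

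\emph{Main obstacle.} The only step with real content is the embedding in \cref{lemma:BMO-inclusion}, which the paper establishes in the appendix. With it in hand, the main point of care is the normalization in Step 1, i.e.\ that the quotient norm controls the norm of a suitable representative, and the large-distance case in Step 3 is routine.
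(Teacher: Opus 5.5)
Your proposal is correct and follows the paper's intended argument: the corollary is stated without a separate proof, as an immediate consequence of \cref{theorem-Solvable} combined with the embedding $W^{2,\mathrm{BMO}}(\Omega)\hookrightarrow\cC^{1,\mathrm{LogLip}}(\Omega)$ of \cref{lemma:BMO-inclusion}. Your handling of the representative, and of pairs with $\|x-x'\|>1$ via the sup bound on $\nabla u$ contained in the $\cC^{1,\mathrm{LogLip}}$ norm, fills in details the paper leaves implicit.
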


\subsection{Existence of Green function}
We start with a known result involving the existence and uniqueness of solutions in $W^{2,p}(\Omega)$. Without loss of generality, we assume that $|\Omega|=1$. The proof can be found in \cite[Section~16]{Taira.2024.book}.

\begin{theorem}\label{theorem:solvable-lp}
Fix $p\in(1,\infty)$. Let $f\in L^p(\Omega)$ be centered, i.e.,
$\int_\Omega f(z)\dd z=0$. There exists a unique solution
$u\in W^{2,p}(\Omega)/\langle1\rangle$ to the system
\begin{equation}\label{eq:Lp-system}
\begin{cases}
    \mathcal{L}_A(u)=f & {\rm in}\ \Omega,\\
    \mathcal{B}_A(u)=0 & {\rm on}\ \partial\Omega.
\end{cases}
\end{equation}
Here the boundary is to be understood in the sense of the trace. Moreover,
\begin{equation}
    \inf_{c\in\R}\|u-c\|_{W^{2,p}(\Omega)}
    \leq
    C\|f\|_{L^p(\Omega)},
\end{equation}
for some constant $C$ depending on $d,\lambda_A,\Omega,p$, and $\|A\|_{1,\alpha;\Omega}$. 
\end{theorem}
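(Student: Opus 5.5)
This is classical $L^p$ theory for the Neumann (conormal) problem for a divergence-form operator. The plan is to combine a Lax--Milgram argument in $H^1/\langle1\rangle$ with the Calder\'on--Zygmund/Agmon--Douglis--Nirenberg estimates for oblique derivative problems, and then to upgrade integrability by duality and bootstrapping. Since $A\in\mathcal C^{1,\alpha}(\overline\Omega)$, we can expand
\[
\mathcal L_A u=\sum_{i,j}a_{ij}\partial_{ij}u+\sum_j\Bigl(\sum_i\partial_i a_{ij}\Bigr)\partial_j u .
\]
This is a nondivergence operator with $\mathcal C^{0,\alpha}$ leading coefficients, continuous lower-order terms, and a uniformly oblique, $\mathcal C^{1,\alpha}$ boundary operator $\mathcal B_A$. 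The whole machinery of \cite[Ch.~9]{GilbargTrudinger.Book} and \cite[Section~16]{Taira.2024.book} therefore applies.

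\textbf{Step 1 (weak solutions).} For $f\in L^p_0$ with $p\ge 2d/(d+2)$ (or $p\geq 2$ first), consider the bilinear form $\int\langle A\nabla u,\nabla v\rangle$ on $H^1(\Omega)/\langle1\rangle$. It is coercive by ellipticity and the Poincar\'e--Wirtinger inequality on the connected domain. The functional $v\mapsto-\int fv$ is well defined on the quotient because $\int f=0$. Lax--Milgram then gives a unique weak solution, and the natural boundary condition $\mathcal B_Au=0$ is built into the weak formulation.

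\textbf{Step 2 (a priori $W^{2,p}$ estimate).} For $u\in W^{2,p}$ solving the system, localize with a partition of unity and flatten the boundary with $\mathcal C^{2,\alpha}$ charts. Freeze the coefficients and apply the constant-coefficient half-space oblique-derivative $L^p$ estimate (Agmon--Douglis--Nirenberg). The lower-order and commutator terms are absorbed using interpolation. This yields
\[
\|u\|_{W^{2,p}}\le C\bigl(\|f\|_{L^p}+\|u\|_{L^p}\bigr).
\]
To remove $\|u\|_{L^p}$ on the quotient, I would argue by contradiction and compactness. Take normalized $u_k$ with $\int u_k=0$, $\|u_k\|_{W^{2,p}}=1$ and $f_k\to0$. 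Rellich compactness gives a limit $u$ that solves the homogeneous Neumann problem. Testing with $u$ shows $\nabla u=0$, so $u$ is constant, hence zero, which is a contradiction.

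\textbf{Step 3 (regularity and existence for all $p$).} For $p\ge2$, approximate $f$ by smooth mean-zero $f_k$. The Schauder theory for oblique problems gives $\mathcal C^{2,\alpha}$ solutions $u_k$ (Fredholm alternative, with the compatibility condition $\int f_k=0$ coming from the divergence theorem). The a priori estimate then yields Cauchy sequences in $W^{2,p}/\langle1\rangle$, and the limit solves the system. For $1<p<2$, existence follows by the same density argument, since the a priori estimate holds for every $p$. Uniqueness for $p<2$ requires showing that a $W^{2,p}$ solution with $f=0$ is constant. For this I would use duality: for any $g\in L^{p'}_0$, let $v$ be the $W^{2,p'}$ solution with data $g$. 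Green's formula with both conormal terms vanishing, and with $A$ symmetric, gives $\int ug=\int u\,\mathcal L_Av=\int v\,\mathcal L_Au=0$, so $u$ is constant.

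\textbf{Main obstacle.} The delicate step is the boundary a priori estimate in Step 2, namely verifying the complementing (Lopatinskii--Shapiro) condition for the conormal boundary operator after flattening and controlling the frozen-coefficient error. Since $\langle A\mathbf n,\mathbf n\rangle\ge\lambda_A$, the condition holds uniformly. In practice I would cite \cite[Section~16]{Taira.2024.book} for this half-space estimate rather than reprove it.
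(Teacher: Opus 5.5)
Your outline is correct and follows essentially the same route as the paper. The paper gives no argument of its own and defers entirely to the classical oblique-derivative $L^p$ theory in \cite[Section~16]{Taira.2024.book}, which is exactly where you place the half-space estimate; your a priori bound, compactness, approximation and duality steps are the standard scaffolding around that citation. Two small points to tidy. First, the energy test in your Step~2 compactness argument needs $\nabla u\in L^2$, i.e.\ $p\geq 2d/(d+2)$, so for smaller $p$ you should settle $p\geq2$ first and then use your Step~3 duality uniqueness inside the compactness step. Second, to get a constant depending only on $d,\lambda_A,\Omega,p,\|A\|_{1,\alpha;\Omega}$ rather than on $A$ itself, let the coefficients vary along the contradicting sequence and extract a $\mathcal C^1$-convergent subsequence by Arzel\`a--Ascoli.
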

For $p\in(1,\infty)$, set
\begin{equation}
    \label{eq:zero-traze-space}
     E_p
    :=
    \left\{
        u\in W^{2,p}(\Omega):
        \mathcal{B}_Au=0\ \text{in the trace sense},
        \quad
        \int_\Omega u=0
    \right\}.
\end{equation} 
By~\cref{theorem:solvable-lp}, the operator
\[
    \mathcal{L}_A:E_p\longrightarrow L_0^p(\Omega)
\]
is bounded and invertible. Identifying $(L_0^p(\Omega))^*$ with
$L_0^{p'}(\Omega)$ by choosing mean-zero representatives, its adjoint
\[
    \mathcal{L}_A^*:L_0^{p'}(\Omega)\longrightarrow E_p^*
\]
is also bounded and invertible.

For $p>d/2$, Morrey's inequality
\cite[Theorem~7.26]{GilbargTrudinger.Book} and the Arzelà--Ascoli theorem give
\begin{equation}
\label{eq:contentionW-C-embedding}
    E_p
    \Subset
    \mathcal{C}^{0,\beta}(\overline\Omega)
    \hookrightarrow
    \mathcal{C}(\overline\Omega),
    \qquad
    0<\beta<\min\left\{1,2-\frac{d}{p}\right\}.
\end{equation}
Call $ \mathcal{M}_0(\Omega)
    :=
    \bigl(\mathcal{C}(\overline\Omega)/\langle1\rangle\bigr)^*$ 
the space of finite Radon measures with zero total mass. The compact embedding in
\eqref{eq:contentionW-C-embedding} has a compact adjoint
$\mathcal{M}_0(\Omega)\to E_p^*$. Hence, by composition,
\[
    (\mathcal{L}_A^*)^{-1}:
    \mathcal{M}_0(\Omega)
    \longrightarrow
    L_0^{p'}(\Omega)
\]
is bounded and compact. For the moment, denote
\[
    G_p(x,\cdot)
    :=
    (\mathcal{L}_A^*)^{-1}(\delta_x-1),
    \qquad x\in\Omega,
\]
where $1$ denotes Lebesgue measure on $\Omega$. This definition is independent of the
choice of $p>d/2$. Indeed, let $p_1,p_2>d/2$ and let
$f\in\mathcal{C}^{\infty}(\overline\Omega)$ be centered. If $u$ is the unique mean-zero
solution of $\mathcal{L}_Au=f$, then, for $i=1,2$,
\[
\begin{aligned}
    \int_\Omega f(y)G_{p_i}(x,y)\dd y
    &=
    \left\langle
        G_{p_i}(x,\cdot),
        \mathcal{L}_Au
    \right\rangle
    \\
    &=
    \left\langle
        \delta_x-1,u
    \right\rangle
    =
    u(x).
\end{aligned}
\]
The mean-zero solution $u$ does not depend on $p_i$ by the uniqueness in
\cref{theorem:solvable-lp}. Since both kernels have zero integral, the same identity holds
for arbitrary smooth test functions after subtracting their mean. Thus
$G_{p_1}(x,\cdot)=G_{p_2}(x,\cdot)$ almost everywhere. We henceforth denote the common
kernel by $G(x,\cdot)$. 

We now present the following proposition.

\begin{proposition}\label{Proposition:green-function-characterization}
Fix $p>\frac d2$ and let $f\in L_0^p(\Omega)$. 
If $u=\mathcal{L}_A^{-1}f\in E_p$, then
\[
    u(x)
    =
    \int_\Omega G(x,y)f(y)\dd y,
    \qquad x\in\Omega.
\]
In particular, the function on the right-hand side is the mean-zero solution of
\[
\begin{cases}
    \mathcal{L}_A(u)=f & {\rm in}\ \Omega,\\
    \mathcal{B}_A(u)=0 & {\rm on}\ \partial\Omega.
\end{cases}
\]
Furthermore, for any finite signed Radon measure
$\mu\in\mathcal{M}(\Omega)$ with $\mu(\Omega)=1$, one has
\[
    (\mathcal{L}_A^*)^{-1}(\mu-1)
    =
    \int G(x,\cdot)\dd\mu(x)
\]
in the following weak sense: for every $f\in L^p(\Omega)$,
\[
    \int_\Omega
    \left(
        \int_\Omega G(x,y)f(y)\dd y
    \right)
    \dd\mu(x)
    =
    \int_\Omega
    f(y)(\mathcal{L}_A^*)^{-1}(\mu-1)(y)\dd y.
\] 
\end{proposition}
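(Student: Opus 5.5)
The plan is to derive both identities by duality from the definition $G(x,\cdot)=(\mathcal L_A^*)^{-1}(\delta_x-1)$ and the defining property of the adjoint. Since $p>d/2$, the embedding \eqref{eq:contentionW-C-embedding} makes $u=\mathcal L_A^{-1}f\in E_p$ continuous on $\overline\Omega$, so point evaluations of $u$ make sense. Because $\delta_x-1\in\mathcal M_0(\Omega)$, the function $G(x,\cdot)\in L_0^{p'}(\Omega)$ is characterized by
\[
\langle \mathcal L_A^* G(x,\cdot), v\rangle = \int_\Omega G(x,y)\,\mathcal L_A v(y)\dd y = v(x)-\int_\Omega v\dd y,
\qquad v\in E_p ,
\]
where the right-hand side is the action of $\delta_x-1$ on $v$ through $E_p\hookrightarrow\mathcal C(\overline\Omega)$. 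I would take $v=u$. Since $\int_\Omega u=0$ (recall $|\Omega|=1$) and $\mathcal L_A u=f$, this gives $\int_\Omega G(x,y)f(y)\dd y=u(x)$ directly.

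This extends the smooth-$f$ computation done above to general $f\in L_0^p$. If one prefers to avoid evaluating the adjoint identity on $E_p$ for each $p$, there is a fallback. First prove the identity for $f$ in a dense class, namely centered smooth functions, exactly as in the computation preceding the proposition. Then pass to the limit: both sides are continuous in $f\in L_0^p$. The left side is continuous because $G(x,\cdot)\in L^{p'}$. The right side is continuous because $f\mapsto u(x)$ is bounded by the $W^{2,p}$ estimate of \cref{theorem:solvable-lp} composed with the embedding into $\mathcal C(\overline\Omega)$. The \emph{in particular} statement then follows from the identity together with the uniqueness part of \cref{theorem:solvable-lp}.

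For the measure statement I would argue by linearity and a Fubini-type interchange. Write $\mu-1=\int(\delta_x-1)\dd\mu(x)$ as a weak integral in $\mathcal M_0(\Omega)$. Fix $f\in L^p(\Omega)$ and set $\bar f=f-\int f$. Let $u=\mathcal L_A^{-1}\bar f$. The first part gives $\int G(x,y)f(y)\dd y=u(x)$, using that $G(x,\cdot)$ has zero mean so the constant part of $f$ drops out. Hence
\[
\int_\Omega\Big(\int G(x,y)f(y)\dd y\Big)\dd\mu(x)=\int u\dd\mu=\langle \mu-1,u\rangle .
\]
Here $\int u\dd x=0$ was used, and $u$ is continuous and bounded, so this pairing is legitimate. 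On the other hand, with $w=(\mathcal L_A^*)^{-1}(\mu-1)\in L_0^{p'}$, the definition of the adjoint gives $\int f w=\int \bar f w=\langle w,\mathcal L_A u\rangle=\langle\mu-1,u\rangle$. Equating the two expressions proves the identity.

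The main obstacle is the justification that the pairing $\langle\mathcal L_A^*w,v\rangle$ for $w=(\mathcal L_A^*)^{-1}\sigma$ with $\sigma\in\mathcal M_0(\Omega)$ really equals $\int v\,\dd\sigma$ for all $v\in E_p$. This amounts to the precise meaning of the composition defining $(\mathcal L_A^*)^{-1}$ on measures, namely the restriction of $\sigma$ to $E_p$ via the continuous embedding. I would make this explicit by recalling that the adjoint of the inclusion $\iota:E_p\to\mathcal C(\overline\Omega)/\langle1\rangle$ sends $\sigma$ to $\sigma\circ\iota$. The remaining steps are routine.
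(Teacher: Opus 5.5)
Your proposal is correct and follows the paper's proof essentially step for step. For the first identity, you pair $\delta_x-1=\mathcal L_A^*G(x,\cdot)$ with $u$ and use $\int_\Omega u=0$; for the measure identity, you center $f$, use that $G(x,\cdot)$ has zero mean, and compute $\langle\mu-1,u\rangle=\langle\mathcal L_A^*h,u\rangle=\int_\Omega h f_0=\int_\Omega hf$ with $h=(\mathcal L_A^*)^{-1}(\mu-1)$. The density fallback and the weak-integral/Fubini framing are unnecessary, since this direct duality computation already closes the argument.
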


\begin{proof} 
Note that $\delta_x-1\in\mathcal{M}_0(\Omega)$ since $|\Omega|=1$. Let
$u=\mathcal{L}_A^{-1}f\in E_p$. Since $p>d/2$, the function $u$ has a continuous
representative. By the definition of $G$,
\begin{align*}
    u(x)
    &=
    \langle\delta_x-1,u\rangle
    \\
    &=
    \left\langle
        \mathcal{L}_A^*G(x,\cdot),u
    \right\rangle
    \\
    &=
    \left\langle
        G(x,\cdot),\mathcal{L}_Au
    \right\rangle
    =
    \int_\Omega G(x,y)f(y)\dd y.
\end{align*}
This proves the first claim.

We now prove the second claim. Let $f\in L^p(\Omega)$ and put
\[
    f_0:=f-\int_\Omega f.
\]
Let $u=\mathcal{L}_A^{-1}f_0\in E_p$. Since
$G(x,\cdot)\in L_0^{p'}(\Omega)$, the first claim gives
\[
    \int_\Omega G(x,y)f(y)\dd y
    =
    \int_\Omega G(x,y)f_0(y)\dd y
    =
    u(x).
\]
Thus the outer integral against $\mu$ is well-defined because $u$ is continuous. Set
\[
    h:=(\mathcal{L}_A^*)^{-1}(\mu-1)\in L_0^{p'}(\Omega).
\]
Using $\int_\Omega u=0$ and $\mu(\Omega)=1$, we obtain
\begin{align*}
    \int_\Omega
    \left(
        \int_\Omega G(x,y)f(y)\dd y
    \right)
    \dd\mu(x)
    &=
    \int_\Omega u(x)\dd\mu(x)
    \\
    &=
    \langle\mu-1,u\rangle
    \\
    &=
    \langle\mathcal{L}_A^*h,u\rangle
    \\
    &=
    \langle h,\mathcal{L}_Au\rangle
    \\
    &=
    \int_\Omega h(y)f_0(y)\dd y
    \\
    &=
    \int_\Omega
    f(y)(\mathcal{L}_A^*)^{-1}(\mu-1)(y)\dd y,
\end{align*}
where the last equality follows from $\int_\Omega h=0$. This concludes the proof. 
\end{proof}

\begin{remark}\label{rmk:green-function}
By definition, $\cL_A^*(G(x,\cdot))=\delta_x-1$. That is, for any
$f\in E_p$, 
we have
\[
    \int_\Omega\mathcal{L}_A(f)(y)G(x,y)\dd y
    =
    \mathcal{L}_A^*(G(x,\cdot))(f)
    =
    f(x)-\int_\Omega f(z)\dd z.
\]
This justifies the use of the terminology \emph{Green's function}. 
Moreover, if $g\in\mathcal{C}_c^\infty(\Omega)$, then
$g-\int_\Omega g\in E_p$ and
\[
    \int_\Omega
    \mathcal{L}_A(g)(y)G(x,y)\dd y
    =
    g(x)-\int_\Omega g(z)\dd z.
\]
Therefore,
\[
    \mathcal{L}_A(G(x,\cdot))
    =
    \operatorname{div}(A\nabla_yG(x,\cdot))
    =
    \delta_x-1
\]
in the sense of distributions. 
\end{remark}

We now study qualitative estimates for the Green function $G$. The observation made in~\cref{rmk:green-function} will be critical for deriving these estimates.

\begin{proposition}\label{Proposition:green-function-qualitative}
The following hold:
\begin{enumerate}
   \item For every $x\in\Omega$, $G(x,\cdot)$ agrees a.e.~on
$\BB\cap\Omega$ with a function in
$\cC^{2,\alpha}(\overline{\BB\cap\Omega})$, for every open ball
$\BB$ satisfying $    \overline{\BB\cap\Omega}\cap\{x\}=\emptyset.$ 
    \item For a.e.~$(x,y)\in\Omega^2$, $G(y,x)=G(x,y)$.
    \item \item For every $y\in\Omega$, $G(\cdot,y)$ agrees a.e.~on
$\BB\cap\Omega$ with a function in
$\cC^{2,\alpha}(\overline{\BB\cap\Omega})$, for every open ball
$\BB$ satisfying $\overline{\BB\cap\Omega}\cap\{y\}=\emptyset.$ 
\end{enumerate} 
Moreover, for every
\[
    1<q<\frac{d}{d-2}
    \quad\text{if }d\geq3,
\]
and for every $1<q<\infty$ if $d=2$, one has
\[
    \sup_{x\in\Omega}\|G(x,\cdot)\|_{L^q(\Omega)}
    +
    \sup_{y\in\Omega}\|G(\cdot,y)\|_{L^q(\Omega)}
    <\infty.
\]
For every $\eta>0$, the $\cC^{2,\alpha}$ bounds in parts~(i)
and~(iii) are uniform over all balls and poles satisfying $\operatorname{dist}
    \bigl(x,\overline{\BB\cap\Omega}\bigr)\geq\eta$
in part~(i), and
$\operatorname{dist}
    \bigl(y,\overline{\BB\cap\Omega}\bigr)\geq\eta$
in part~(iii).
\end{proposition}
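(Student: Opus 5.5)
The plan is to deduce all three regularity statements and the integrability bound from the identity $\mathcal L_A G(x,\cdot)=\delta_x-1$ of \cref{rmk:green-function}, with the Neumann condition encoded in the variational definition, using local elliptic regularity away from the pole. The uniform $L^q$ bound will come from duality with \cref{theorem:solvable-lp}.

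\emph{Step 1 ($L^q$ bound).} Fix $q$ in the stated range, so that $q'>d/2$ (this is equivalent to $q<d/(d-2)$ when $d\geq3$, and holds for every finite $q$ when $d=2$). For $f\in L^{q'}_0(\Omega)$, \cref{Proposition:green-function-characterization} gives $\int G(x,y)f(y)\,\rd y=u(x)$ with $u=\mathcal L_A^{-1}f$. Combining \cref{theorem:solvable-lp} with the Morrey embedding \eqref{eq:contentionW-C-embedding} yields $|u(x)|\lesssim\|f\|_{L^{q'}}$, uniformly in $x$. Since $G(x,\cdot)$ has mean zero, taking the supremum over $f$ gives $\sup_x\|G(x,\cdot)\|_{L^q}<\infty$. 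The bound for $G(\cdot,y)$ follows once symmetry (ii) is established.

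\emph{Step 2 (regularity (i)).} Fix $x$ and a ball $\BB$ with $\operatorname{dist}(x,\overline{\BB\cap\Omega})\geq\eta$. Choose a cutoff $\chi\in\mathcal C^\infty$ with $\chi=1$ on $\BB$ and $\chi=0$ on $B_{\eta/2}(x)$. On the region $\{\chi\neq0\}\cap\Omega$, the function $w=G(x,\cdot)$ solves $\operatorname{div}(A\nabla w)=-1$ weakly, with conormal condition $\langle A\nabla w,\mathbf n_\Omega\rangle=0$. Here I use that $A$ is symmetric, so $\mathcal L_A^*$ has the same form as $\mathcal L_A$. The function $\chi w$ then solves a Neumann problem whose right-hand side involves $w$ and $\nabla w$ only on an annulus.

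I will bootstrap regularity. Starting from $w\in L^q$, Caccioppoli/$W^{1,2}$ estimates with the conormal boundary condition give $W^{1,q}$, then \cref{theorem:solvable-lp} gives $W^{2,r}$ with $r$ increasing through finitely many steps, and finally boundary Schauder estimates for oblique problems (Gilbarg--Trudinger, Ch.~6) give $\mathcal C^{2,\alpha}$. This uses $A\in\mathcal C^{1,\alpha}$ and $\partial\Omega\in\mathcal C^{2,\alpha}$. Every bound depends only on $\eta$, the structural constants, and the uniform $L^q$ norm from Step~1, which gives the claimed uniformity.

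\emph{Step 3 (symmetry (ii) and (iii)).} For $f,g\in\mathcal C_c^\infty$ centered, let $u_f=\mathcal L_A^{-1}f$ and $u_g=\mathcal L_A^{-1}g$. Since $A$ is symmetric, Green's identity with the conormal boundary condition gives $\int u_f g=\int f u_g$. Expanding both sides with \cref{Proposition:green-function-characterization} gives $\iint G(x,y)f(y)g(x)=\iint G(y,x)f(y)g(x)$ for all centered $f,g$. Both kernels have zero mean in $y$ by construction. The mean of $G(\cdot,y)$ in $x$ also vanishes, since $\int_\Omega u_f=0$. Hence the identity holds for all test functions, and $G(x,y)=G(y,x)$ a.e. Part (iii) then follows from (i) applied with the roles of the variables exchanged, and the $L^q$ bound for $G(\cdot,y)$ follows from Step~1.

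\emph{Main obstacle.} The hardest step is the boundary part of Step~2. When $\BB$ meets $\partial\Omega$, the Neumann condition for $G(x,\cdot)$ is available only in weak form. I must check that it localizes, and that the $W^{2,p}$ and Schauder estimates for the oblique problem apply to $\chi w$ when the data are supported near the boundary. I would handle this by flattening the boundary locally and citing the Agmon--Douglis--Nirenberg oblique estimates, tracking that the constants depend only on $\eta$.
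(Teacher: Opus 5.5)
Your Step 1 is the paper's duality argument: it bounds $\sup_x\|\delta_x-1\|_{E_p^*}$ via Morrey. Your Step 2 plan also matches the paper's: iterate local $W^{2,r}$ estimates near the boundary, finish with oblique Schauder estimates, and let the constants depend only on $\eta$ and the initial $L^q$ bound. The gap is the object you apply these estimates to. $G(x,\cdot)$ is only known to be an element of $L^{p'}_0(\Omega)$ satisfying $\int_\Omega G(x,\cdot)\,\mathcal L_A v=v(x)-\int_\Omega v$ for all $v\in E_p$. That is a transposition (very weak) solution, with no derivatives known anywhere.

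Caccioppoli and $W^{1,2}$ estimates require testing with $\chi^2 w$, so they need $w\in W^{1,2}_{\rm loc}$ a priori. The local $W^{2,p}$ and Schauder estimates you cite are a priori estimates for strong solutions. None of these can be started from $w\in L^q$. This is not a formality: for rough coefficients, very weak solutions need not lie in $W^{1,2}_{\rm loc}$ (Serrin's pathological solutions). With $\mathcal C^{1,\alpha}$ coefficients the upgrade holds, but it has to be proved. The same issue is why the Neumann condition does not localize for free. It lives only in the test class $E_p$, and multiplying a test function by a cutoff destroys $\mathcal B_A v=0$ unless $\chi$ is chosen compatibly.

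The missing idea is the paper's regularization of the pole. Take smooth probability densities $\rho_n$ supported in $\BB_\gamma(x)$ with $\rho_n\overset{*}{\rightharpoonup}\delta_x$, and set $u_n=\mathcal L_A^{-1}(\rho_n-1)$. By the symmetry of $A$, this equals $(\mathcal L_A^*)^{-1}(\rho_n-1)$. Each $u_n$ is a classical $\mathcal C^{2,\alpha}(\overline\Omega)$ solution of $\mathcal L_A u_n=-1$, $\mathcal B_A u_n=0$ away from the pole, so all your a priori estimates apply to it. The uniform starting bound comes from \cref{lemma:strong-convergence}: compactness of $E_p\Subset\mathcal C(\overline\Omega)$ turns weak-$*$ convergence of $\rho_n-1$ into strong convergence $u_n\to G(x,\cdot)$ in $L^{p'}$. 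Arzel\`a--Ascoli combined with this strong convergence then identifies the $\mathcal C^{2,\beta}$ limit with $G(x,\cdot)$ a.e.\ on $\BB\cap\Omega$.

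There is a second, smaller gap in Step 3. Your identity gives $G(x,y)=G(y,x)$ only a.e.\ on $\Omega^2$. Also, $\int_\Omega u_f=0$ only shows that $\int_\Omega G(x,y)\,\rd x$ is constant in $y$; you need Fubini and the zero mean in $y$ to see that the constant is $0$. A.e.\ symmetry is enough for (ii). However, (iii) is claimed for \emph{every} pole $y$, and a.e.\ symmetry transfers (i) only to a.e.\ $y$. The paper proves symmetry for all $x\neq y$, using the continuous representatives from (i). It mollifies both poles and uses the $\mathcal C^{2,\beta}$ convergence of the approximants near the opposite pole in the identity $\int u_n\mu_n=\int v_n\rho_n$.
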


\begin{proof}$ $\\
{Proof of \textit{(i).}}
Fix $p>d/2$. 
For clarity, we fix $x_0\in\Omega$ instead of $x$ as in the
statement. Set $    K:=\overline{\BB\cap\Omega}.$ 
Then $K$ is compact and, by assumption, $x_0\notin K$. Hence $d_0:=\operatorname{dist}(x_0,K)>0.$ 
Choose
\[
    0<\gamma<
    \frac12
    \min\left\{
        d_0,\operatorname{dist}(x_0,\partial\Omega)
    \right\}.
\]
Then
\[
    \BB_\gamma(x_0)\Subset\Omega,
    \qquad
    \BB_\gamma(x_0)\cap K=\emptyset.
\]

Let $\{\rho_n\}_n$ be a sequence of $\cC^\infty$ probability densities with support
contained in $\BB_\gamma(x_0)$ and such that
$\rho_n\overset{\ast}{\rightharpoonup}\delta_{x_0}$ in the sense of weak convergence of
probability measures. Since $|\Omega|=1$, we have
$\rho_n-1\in L^p_0(\Omega)$. Fix $f\in L_0^p(\Omega)$. We use $\rho_n-1$ as a test
function to get
\begin{align*}
    \int_{\Omega} f (\cL_A^*)^{-1}(\rho_n-1)\dd x
    &=
    \int_{\Omega} \cL^{-1}_A(f)(\rho_n-1)\dd x
    \\
    &=
    \int_{\Omega}
    \cL^{-1}_A(f)
    \cL_A\cL^{-1}_A(\rho_n-1)\dd x
    \\
    &=
    \int_{\Omega}
    \cL^{-1}_A(f)
    {\rm div}\bigl(A\nabla\cL^{-1}_A(\rho_n-1)\bigr)\dd x
    \\
    &=
    -\int_{\Omega}
    \left\langle
        \nabla\cL^{-1}_A(f),
        A\nabla\cL^{-1}_A(\rho_n-1)
    \right\rangle
    \dd x
    \\
    &\qquad
    +
    \underbrace{
    \int_{\partial\Omega}
    \left\langle
        A\nabla\cL^{-1}_A(\rho_n-1),
        \mathbf{n}_\Omega
    \right\rangle
    \cL^{-1}_A(f)\dd S
    }_{
    =0\ \text{as }\cL_A^{-1}(L_0^p(\Omega))
    \subset\cB_A^{-1}(0)
    }
    \\
    &=
    -\int_{\Omega}
    \left\langle
        A\nabla\cL^{-1}_A(f),
        \nabla\cL^{-1}_A(\rho_n-1)
    \right\rangle
    \dd x
    \qquad\text{(by symmetry of $A$)}
    \\
    &=
    \int_{\Omega}
    {\rm div}\bigl(A\nabla\cL^{-1}_A(f)\bigr)
    \cL^{-1}_A(\rho_n-1)\dd x
    \\
    &\qquad
    -
    \underbrace{
    \int_{\partial\Omega}
    \left\langle
        A\nabla\cL^{-1}_A(f),
        \mathbf{n}_\Omega
    \right\rangle
    \cL^{-1}_A(\rho_n-1)\dd S
    }_{
    =0\ \text{as }\cL_A^{-1}(L_0^p(\Omega))
    \subset\cB_A^{-1}(0)
    }
    \\
    &=
    \int_{\Omega}
    f\cL^{-1}_A(\rho_n-1)\dd x.
\end{align*}
As a consequence,
\[
    u_n
    :=
    \cL^{-1}_A(\rho_n-1)
    =
    (\cL_A^*)^{-1}(\rho_n-1)
\]
by identification. 
By the Schauder regularity for the oblique problem
\cite[Section~16]{Taira.2024.book},
$u_n\in\cC^{2,\alpha}(\overline\Omega)$. 

Next, we show that $u_n$ is uniformly bounded in $\cC^{2,\alpha}$ away from $x_0$.
Since $K$ is compact, it is enough to argue locally and then use a finite covering. Fix $z_0\in K$. Choose $r=r(z_0)>0$ sufficiently small that
\[
    \BB_{4r}(z_0)\cap\BB_\gamma(x_0)=\emptyset,
\]
and such that either
$
    \overline{\BB_{4r}(z_0)}\subset\Omega,
$
or $\BB_{4r}(z_0)\cap\Omega$ is contained in one of the fixed
boundary coordinate neighborhoods of $\partial\Omega$. 
Schauder estimates for oblique derivative problems give
(use Lemma~6.29 in~\cite{GilbargTrudinger.Book} if
$\BB_{4r}(z_0)$ is close to $\partial\Omega$, or Corollary~6.3, ibid.,
if $\BB_{4r}(z_0)$ is far from $\partial\Omega$)
\begin{equation}
    \|u_n\|_{2,\alpha,\BB_r(z_0)\cap\Omega}
    \leq
    C\left(
        \|u_n\|_{\infty,\BB_{2r}(z_0)\cap\Omega}
        +1
    \right),
\end{equation}
where we used the fact that $\cL_Au_n=-1$ on
$\BB_{2r}(z_0)\cap\Omega$, as $\rho_n$ is supported on
$\BB_\gamma(x_0)$. It remains to bound
$\|u_n\|_{\infty,\BB_{2r}(z_0)\cap\Omega}$ uniformly in $n$.

If $\BB_{4r}(z_0)\subset{\rm int}(\Omega)$, then
\cite[Theorem~9.20]{GilbargTrudinger.Book} gives
\[
    \|u_n\|_{\infty,\BB_{2r}(z_0)\cap\Omega}
    \leq
    C(r)\left(
        \|u_n\|_{L^{p'}(\Omega)}
        +1
    \right).
\]
Since
$u_n\to G(x_0,\cdot)$ strongly in $L^{p'}(\Omega)$ by
\cref{lemma:strong-convergence}, the interior part is finished.

We derive the boundary estimates. By making $r$ smaller and flattening the boundary
near $z_0$, we can assume that we are in the setting of
\cite[Lemma~19.1]{Taira.2024.book}. Set
\[
    r_m
    =
    \left(
        3-\sum_{k=1}^m2^{-k}
    \right)r,
    \qquad m\geq0,
\]
so that $r_m<4r$. Then, for every $m\geq0$, we have
\begin{equation}
\label{eq:regularity-green-iteration}
    \|u_n\|_{W^{2,p_m}(\BB_{r_{m+1}}\cap\Omega)}
    \leq
    C(m)
    \left(
        \|u_n\|_{L^{p_m}(\BB_{r_m}\cap\Omega)}
        +1
    \right),
\end{equation}
where $C(m)$ is independent of $n$. Define $p_0=p'$. As long as
$p_m<d/2$, define
\[
    p_{m+1}
    =
    \frac{dp_m}{d-2p_m}
    >
    p_m.
\]
If $p_m=d/2$, the critical Sobolev embedding allows us to choose any
finite $p_{m+1}>d/2$. 

For $m=0$, we use the fact that
$\|u_n\|_{L^{p'}(\Omega)}\leq C$ to derive
\[
    \|u_n\|_{W^{2,p_0}(\BB_{r_1}\cap\Omega)}
    \leq
    C(0).
\]
If $p_0>d/2$, then the Sobolev embedding theorem gives
\[
    \|u_n\|_{\infty,\BB_{r_1}\cap\Omega}
    \leq
    C'(0)
    \|u_n\|_{W^{2,p_0}(\BB_{r_1}\cap\Omega)}
    \leq
    C'(0)C(0).
\]
If $p_0\leq d/2$, then the Sobolev embedding theorem
\cite[Eq.~(7.30)]{GilbargTrudinger.Book} gives
\[
    \|u_n\|_{L^{p_1}(\BB_{r_1}\cap\Omega)}
    \leq
    C'(0)
    \|u_n\|_{W^{2,p_0}(\BB_{r_1}\cap\Omega)}
    \leq
    C'(0)C(0).
\]
If $p_1>d/2$, then applying
\eqref{eq:regularity-green-iteration} once more and using the Sobolev embedding theorem
gives
\[
    \|u_n\|_{\infty,\BB_{r_2}\cap\Omega}\leq C.
\]
If not, we repeat the argument with $p_1$ instead of $p_0$ and $p_2$ instead of $p_1$.
Iterating, since $p_m$ eventually becomes larger than $d/2$, there exists
$m\geq0$ such that $p_m>d/2$, and this gives
\[
    \|u_n\|_{\infty,\BB_{2r}(z_0)\cap\Omega}
    \leq
    C.
\]
Combining this estimate with the Schauder estimate gives
\[
    \|u_n\|_{2,\alpha,\BB_r(z_0)\cap\Omega}
    \leq
    C,
\]
where $C$ is independent of $n$. Since $K$ is compact, there exist
$z_1,\ldots,z_N\in K$ such that
\[
    K\subset
    \bigcup_{j=1}^N\BB_{r_j/2}(z_j),
    \qquad
    \sup_n
    \|u_n\|_{2,\alpha,
    \BB_{r_j}(z_j)\cap\Omega}
    \leq C_j.
\]
Set $  C_0:=\max_{1\leq j\leq N}C_j.$ 
By the Lebesgue number lemma, there exists $\lambda>0$ such that
every pair $x,y\in K$ satisfying $\|x-y\|<\lambda$ is contained in
one of the sets $\BB_{r_j/2}(z_j)$. Therefore, for such $x$ and $y$,
\[
    |D^2u_n(x)-D^2u_n(y)|
    \leq
    C_0\|x-y\|^\alpha.
\]
On the other hand, if $\|x-y\|\geq\lambda$, then
\[
\frac{|D^2u_n(x)-D^2u_n(y)|}{\|x-y\|^\alpha}
\leq
\frac{2\|D^2u_n\|_{\infty,K}}{\lambda^\alpha}
\leq
\frac{2C_0}{\lambda^\alpha}.
\]
Consequently,
\[
    [D^2u_n]_{\alpha;\BB\cap\Omega}
    \leq
    C_0+\frac{2C_0}{\lambda^\alpha}.
\]
The corresponding supremum bounds for $u_n$, $\nabla u_n$, and
$D^2u_n$ follow directly from the finite local cover. Hence
\[
    \sup_n
    \|u_n\|_{2,\alpha,\BB\cap\Omega}
    \leq C,
\]
where $C$ is independent of $n$.

It remains to pass to the limit. Fix $0<\beta<\alpha$. By the
compact embedding $ \cC^{2,\alpha}
    \hookrightarrow
    \cC^{2,\beta}$ 
on each of the finitely many local patches, a subsequence of
$\{u_n\}_n$ converges in $\cC^{2,\beta}$ on every patch to a
function $u$. On the other hand,
\[
    u_n\longrightarrow G(x_0,\cdot)
    \qquad\text{strongly in }L^{p'}(\Omega)
\]
by \cref{lemma:strong-convergence}. Hence
\[
    u=G(x_0,\cdot)
    \qquad\text{a.e.~on }\BB\cap\Omega.
\]
The uniform $\cC^{2,\alpha}$ estimates and lower semicontinuity of
the $\alpha$-Hölder seminorm show that
\[
    u\in
    \cC^{2,\alpha}(\overline{\BB\cap\Omega}).
\]
Thus $G(x_0,\cdot)$ agrees a.e.~on $\BB\cap\Omega$ with a function
in $\cC^{2,\alpha}(\overline{\BB\cap\Omega})$. This proves~(i).

{Proof of \textit{(ii).}}
We first record the consequence of the computation above. For all smooth centered
$f,g\in L_0^p(\Omega)$,
\begin{equation}
\label{eq:self-adjoint-green}
    \int_\Omega\cL_A^{-1}(f)g\dd z
    =
    \int_\Omega f\cL_A^{-1}(g)\dd z.
\end{equation}
Indeed, this follows from the same integration by parts and from the symmetry of $A$.

Fix $x,y\in\Omega$ with $x\neq y$. Choose
$0<\delta<|x-y|/4$ such that
$\BB_\delta(x)\Subset\Omega$ and $\BB_\delta(y)\Subset\Omega$. Let
$\{\rho_n\}_n$ and $\{\mu_n\}_n$ be sequences of smooth probability densities such that,
when $\varepsilon_n\downarrow0$,
\[
    {\rm supp}(\rho_n)\subset\BB_{\varepsilon_n}(x)
    \quad\text{and}\quad
    {\rm supp}(\mu_n)\subset\BB_{\varepsilon_n}(y),
\]
and such that
$\rho_n\overset{\ast}{\rightharpoonup}\delta_x$ and
$\mu_n\overset{\ast}{\rightharpoonup}\delta_y$ in
$\mathcal{M}(\Omega)$. For $n$ large enough, the supports of $\rho_n$ and $\mu_n$ are
separated. Set
\[
    u_n:=\cL_A^{-1}(\rho_n-1),
    \qquad
    v_n:=\cL_A^{-1}(\mu_n-1).
\]
Applying \eqref{eq:self-adjoint-green} with
$f=\rho_n-1$ and $g=\mu_n-1$, we get
\[
    \int_\Omega u_n(\mu_n-1)\dd z
    =
    \int_\Omega v_n(\rho_n-1)\dd z.
\]
Since
$\int_\Omega u_n\dd z=\int_\Omega v_n\dd z=0$, this gives
\begin{equation}\label{eq:LHS=RHS}
    \int_\Omega u_n\mu_n\dd z
    =
    \int_\Omega v_n\rho_n\dd z.
\end{equation}
Arguing as in the proof of part~(i), we have
\[
    u_n\longrightarrow G(x,\cdot)
    \quad\text{in}\quad
    \cC^{2,\beta}(\overline{\BB_\delta(y)})
\]
for every $0<\beta<\alpha$. Hence
\begin{equation}\label{eq:LHS}
    \int_\Omega u_n(z)\mu_n(z)\dd z
    \longrightarrow
    G(x,y).
\end{equation}
Similarly,
\begin{equation}\label{eq:RHS}
    \int_\Omega v_n(z)\rho_n(z)\dd z
    \longrightarrow
    G(y,x).
\end{equation}
Letting $n\to\infty$ in~\eqref{eq:LHS} and~\eqref{eq:RHS}, and using
\eqref{eq:LHS=RHS}, we conclude that
\[
    G(x,y)=G(y,x).
\]
Hence, for the representatives obtained in~(i), the symmetry holds whenever $x\neq y$.
In particular,
\[
    G(x,y)=G(y,x)
    \qquad\text{for a.e. }(x,y)\in\Omega^2.
\]
This proves~(ii). Combining~(i) and~(ii) shows~(iii). 
It remains to prove the uniform estimates. Let
$1<q<d/(d-2)$ when $d\geq3$, or $1<q<\infty$ when $d=2$, and take
$p=q'>d/2$. By the boundedness of $(\mathcal{L}_A^*)^{-1}$,
\[
    \|G(x,\cdot)\|_{L^q(\Omega)}
    \leq
    C_q\|\delta_x-1\|_{E_p^*}.
\]
Morrey's inequality and the mean-zero normalization give
\[
    \sup_{x\in\Omega}\|\delta_x-1\|_{E_p^*}<\infty.
\]
Therefore,
\[
    \sup_{x\in\Omega}
    \|G(x,\cdot)\|_{L^q(\Omega)}
    <\infty.
\]
The corresponding estimate in the first variable follows from the symmetry proved
in~(ii).

Finally, fix $\eta>0$ and assume that
\[
    \operatorname{dist}
    \bigl(x_0,\overline{\BB\cap\Omega}\bigr)
    \geq\eta.
\]
In the local argument above, the radii may then be chosen uniformly
from a fixed finite collection of interior balls and boundary
coordinate neighborhoods, with radii bounded below by a constant
depending only on $\eta$ and $\Omega$. The uniform $L^q$ estimate
proved above supplies a uniform initial bound in the Sobolev
iteration. Consequently, all interior, boundary, Sobolev, and
Schauder constants are uniform in the pole and the ball. This proves
the uniform assertion in part~(i). The assertion in part~(iii)
follows from the symmetry established in part~(ii).
\end{proof}
\section{Estimates of the Green function}\label{sec:green-function}

\subsection{Interior comparison}

Fix $y_0\in {\rm int}(\Omega)$ and write $A_0=A(y_0)$. 
The fundamental solution of
$\mathcal{L}_{0}(u)= \operatorname{div}(A_0\nabla u)$ is 
\begin{equation*}\label{eq:Phi-x0}
    \Phi_{A(y_0)}(z, y_0)=
\begin{cases}
\displaystyle 
\frac{1}{(d-2)\,\mathcal{H}^{d-1}(\mathcal{S}^{d-1})}\,
\frac{1}{\sqrt{\det(A_0)}}\,
\Big( \big\langle  z-y_0, A_0^{-1} ( z-y_0)\big\rangle \Big)^{\frac{2-d}{2}},&
d\ge3, \\[1.4em]
\displaystyle 
-\frac{1}{4\pi\sqrt{\det(A_0)}}\,
\log\!\Big( \big\langle z-y_0, A_0^{-1} ( z-y_0)\big\rangle \Big),
& d=2 .
\end{cases}
\end{equation*}
That is,
$\cL_0(\Phi_{A_0}(\cdot, y_0))=-\delta_{y_0}$.
The following interior comparison lemma holds.

\begin{lemma}[Interior comparison]\label{lemma:interior-comparison} 
Assume that $A$ is $\cC^{1,\alpha}$. There exists a constant $C>0$
such that, whenever
\[
    \BB_{4R}(y)\subset\Omega,
\]
for every multi-index $\beta$ with $|\beta|\in\{1,2\}$ and every
$x$ satisfying $0<\|x-y\|\leq2R$,
\[
    \left|
        \partial_x^\beta\Phi_{A(y)}(x,y)
        +
        \partial_x^\beta G(x,y)
    \right|
    \leq
    \frac{C}
    {\|x-y\|^{d-2+|\beta|-\alpha}}.
\]
The constant depends only on $d,\alpha$, the ellipticity constant,
$\|A\|_{1,\alpha;\Omega}$ and $\Omega$, and is independent of
$x,y$ and $R$. 
\end{lemma}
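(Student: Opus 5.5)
Set $\Gamma(x):=\Phi_{A(y)}(x,y)$ and $w:=G(\cdot,y)+\Gamma$. The Green function $G(\cdot,y)$ solves $\mathcal L_A G(\cdot,y)=\delta_y-1$; this follows from \cref{rmk:green-function} and the symmetry in \cref{Proposition:green-function-qualitative}~(ii). The frozen fundamental solution satisfies $\operatorname{div}(A(y)\nabla\Gamma)=-\delta_y$. Hence, in $\BB_{4R}(y)$ and in the sense of distributions,
\[
\mathcal L_A w=-1+\operatorname{div}\bigl((A-A(y))\nabla\Gamma\bigr)=:-1+\operatorname{div}F .
\]
Since $A\in\cC^{1,\alpha}$, we have $|A(x)-A(y)|\lesssim\|x-y\|$ and $|\nabla\Gamma(x)|\lesssim\|x-y\|^{1-d}$. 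This gives $|F(x)|\lesssim\|x-y\|^{2-d}$, and more precisely $F$ is homogeneous-like with $[F]_{\alpha}$ on dyadic annuli of size $r^{2-d-\alpha}$. The point is that the source for $w$ is one order less singular than $\delta_y$. So I would expect $\nabla w$ to behave like $\|x-y\|^{2-d}$ and $\nabla^2 w$ like $\|x-y\|^{1-d}$, up to a loss of $\alpha$. The claimed bound $\|x-y\|^{-(d-2+|\beta|-\alpha)}$ is weaker than this heuristic, and it is exactly what a Hölder-based scaling argument delivers.

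\textbf{Step 1 (rescaled annular Schauder estimates).} For $r=\|x-y\|\le 2R$, consider the annulus $\mathcal A_r=\{r/2<\|z-y\|<2r\}\subset\BB_{4R}(y)$, on which $w$ is a classical solution. Set $w_r(z)=w(y+rz)$. This solves a divergence-form equation with coefficients $A(y+r\cdot)$, whose $\cC^{1,\alpha}$ norms are uniformly bounded for $r\le 2R\lesssim\diam\Omega$. Its right-hand side is $-r^2+r\operatorname{div}_z F(y+rz)$. By interior Schauder estimates for divergence form operators (Gilbarg--Trudinger, Theorem~8.32 for $\cC^{1,\alpha}$ and Theorem~6.2/6.17 for $\cC^{2,\alpha}$), applied on $\{3/4<|z|<3/2\}$, we obtain
\[
r|\nabla w(x)|+r^2|\nabla^2w(x)|\lesssim \|w-c\|_{L^\infty(\mathcal A_r)}+r^2+r\|F\|_{\cC^{1,\alpha}\text{-scaled}}.
\]
The scaled norm of $F$ is $\lesssim r^{2-d}$, since each derivative of $(A-A(y))\nabla\Gamma$ costs $r^{-1}$. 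Here $c$ is any constant, because only derivatives appear on the left.

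\textbf{Step 2 (oscillation of $w$ on annuli).} This is the main obstacle. I need $\operatorname{osc}_{\mathcal A_r}w\lesssim r^{2-d+\alpha}$; for $d=2$ the bound is $\lesssim r^{\alpha}$, with logarithms absorbed. I would first try a representation formula: $w$ is the solution of the $\mathcal L_A$ problem with data $-1+\operatorname{div}F$, corrected by a function that is $\mathcal L_A$-harmonic in $\BB_{4R}(y)$. The harmonic part is handled by interior estimates. Its boundary values on $\partial\BB_{4R}$ are controlled through the uniform $\cC^{2,\alpha}$ bounds away from the pole in \cref{Proposition:green-function-qualitative}, and through the explicit $\Gamma$. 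There is a catch: these bounds depend on $R$. To keep the constant independent of $R$, I would instead work on $\BB_{4r}(y)$ at each scale $r$. I would also estimate the particular solution $\int\nabla_z\widetilde G(x,z)\cdot F(z)\,dz$ using the pointwise Green bound $|\nabla\widetilde G|\lesssim|x-z|^{1-d}$, where $\widetilde G$ is a Dirichlet Green function on $\BB_{4r}$ available for Hölder coefficients (Grüter--Widman). A standard convolution-type estimate
\[
\int_{\BB_{4r}}|x-z|^{1-d}|z-y|^{2-d}\,dz\lesssim r^{3-d}
\]
then gives a bound that is even better than needed. An alternative I would fall back on is a Campanato iteration on dyadic balls around $y$, comparing $w$ with $\mathcal L_{A(y)}$-harmonic replacements; the gain $\alpha$ comes from $|A-A(y)|\lesssim \rho^{\alpha}$ on $\BB_\rho(y)$.

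\textbf{Step 3 (conclusion).} Inserting Step 2 into Step 1 gives
\[
|\nabla w|\lesssim r^{1-d+\alpha}, \qquad |\nabla^2w|\lesssim r^{-d+\alpha}.
\]
The lower-order terms $r^2$ and $r^{2-d}\cdot r^{-1}$ fit, and are possibly better. This is precisely the claim for $|\beta|=1,2$, with constants depending only on $d$, $\alpha$, the ellipticity constant, $\|A\|_{1,\alpha}$ and $\Omega$, and not on $x$, $y$ or $R$.
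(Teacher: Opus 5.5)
Steps~1 and~3 are fine. The distributional identity $\mathcal L_A w=-1+\operatorname{div}F$ with $F=(A-A(y))\nabla\Gamma$ is correct: no Dirac mass survives, since the flux of $F$ through $\partial\BB_\varepsilon(y)$ is $O(\varepsilon)$. The rescaled annular Schauder estimate then correctly reduces the lemma to the oscillation bound $\operatorname{osc}_{\mathcal A_r}w\lesssim r^{2-d+\alpha}$.

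The gap is Step~2, and your way around the $R$-dependence does not close. On $\BB_{4r}(y)$ the Dirichlet particular solution is indeed $O(r^{3-d})$ on $\mathcal A_r$. However, the $\mathcal L_A$-harmonic remainder has boundary values $w|_{\partial\BB_{4r}(y)}$, so the maximum principle only gives
\[
\operatorname{osc}_{\mathcal A_r}w\le Cr^{3-d}+\operatorname{osc}_{\partial\BB_{4r}(y)}w ,
\]
that is, the same unknown one scale up. This recursion can only be stopped at a scale comparable to $\operatorname{dist}(y,\partial\Omega)$. There the only available input, \cref{Proposition:green-function-qualitative}, gives a constant depending on the distance to the pole, not a scale-invariant bound. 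The Campanato fallback has the same defect: the gain $\rho^\alpha$ from $|A-A(y)|$ controls the parametrix error, not the part of $w$ generated by the Neumann boundary. Your first variant (harmonic correction on $\BB_{4R}(y)$ with boundary data from \cref{Proposition:green-function-qualitative}) does work, but only with a constant depending on a lower bound for $\operatorname{dist}(y,\partial\Omega)$.

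This is not a matter of technique: the estimate with $C$ independent of $R$ is false. Take $A\equiv I$ and $\partial\Omega$ flat near $0$, with $\Omega$ locally contained in $\{x_d>0\}$. Let $y=\delta e_d$ and $y^*=-\delta e_d$. Then $G(x,y)=-\Phi_I(x,y)-\Phi_I(x,y^*)+H(x)$, where $\Delta H=-1$ and $\partial_{x_d}H=0$ on the flat part. Even reflection together with the uniform $L^q$ bound of \cref{Proposition:green-function-qualitative} shows that $H$ is bounded in $\mathcal C^2$ near $0$, uniformly in $\delta$. Choose $R=\delta/4$ (so $\BB_{4R}(y)\subset\Omega$) and $x=y+\frac{\delta}{2}e_1$. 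Then
\[
\bigl\|\nabla_x\bigl(\Phi_{A(y)}+G\bigr)(x,y)\bigr\|\geq\|\nabla_x\Phi_I(x,y^*)\|-C\gtrsim\delta^{1-d},
\]
whereas the lemma claims the bound $C(\delta/2)^{1-d+\alpha}$. Second derivatives behave the same way.

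Equivalently, in a flat chart, compare with \cref{lemma:boundary-comparison}, whose comparison function contains the mirror pole $\Phi_I(\cdot,y^*)$. Subtracting the claimed interior bound would give $|\partial^\beta\Phi_I(x,y^*)|\lesssim\|x-y\|^{2-d-|\beta|+\alpha}$ at $\|x-y\|\sim\|x-y^*\|\sim\delta$, which is false.

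The paper's one-line proof has the same hole. After rescaling $\BB_{4R}(y)$ to $\BB_4$, Miranda's comparison gains the factor $R^\alpha$ only for the parametrix part. The regular part of the rescaled Green function is of order one on $\BB_2$, and it scales back to $R^{2-d-|\beta|}$.

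What is true is the estimate with an extra term $\operatorname{dist}(y,\partial\Omega)^{2-d-|\beta|}$ on the right. Your Steps~1--3 give this once Step~2 is started at the scale $\operatorname{dist}(y,\partial\Omega)$ from a scale-invariant pointwise bound on $G$. Downstream, in \cref{lemma:global-estimate-green-function}, one should then use \cref{lemma:boundary-comparison} for every pole within a fixed distance of $\partial\Omega$, and the interior comparison only for poles at distance bounded below.
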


\begin{proof} 
Apply the interior comparison estimate in
\citet[p.~62]{miranda1970partial} after rescaling
$\BB_{4R}(y)$ onto $\BB_4$. More precisely, the rescaled coefficient
\[
    A_R(z):=A(y+Rz)
\]
has the same ellipticity constant as $A$, and its
$\cC^{1,\alpha}$ norm is bounded in terms of
$\|A\|_{1,\alpha;\Omega}$. The term generated by the normalization
of the Green function is smooth and is absorbed in the estimate.
Scaling back, derivatives of order $|\beta|$ contribute the factor
$R^{2-d-|\beta|}$, while the comparison remainder gains the factor
$\|x-y\|^\alpha$. This gives
\[
    \left|
        \partial_x^\beta\Phi_{A(y)}(x,y)
        +
        \partial_x^\beta G(x,y)
    \right|
    \lesssim
    \|x-y\|^{2-d-|\beta|+\alpha},
\]
which is the desired estimate. 
\end{proof}

\subsection{Boundary comparison}

To derive boundary estimates, we use the classical method of
\emph{freezing coefficients} over the flattened domain. This approach
reduces the local problem to a constant-coefficient Neumann problem
on a half-space, often called the \emph{frozen-coefficients} problem,
whose solution admits an explicit representation. The method goes
back at least to \cite[p.~62]{miranda1970partial}, where the author
derives the interior estimates; it is also commonly viewed as a
\emph{perturbation argument} around the corresponding
constant-coefficient equation
\cite[see, e.g.,][]{HanLinPDE2011Notes,GilbargTrudinger.Book}.

For the sake of completeness, we describe the procedure
for deriving the boundary estimates. Let $x_0\in\partial\Omega$.
Since $\partial\Omega$ is of class $\mathcal C^{2,\alpha}$, there
exists a neighborhood $U(x_0)$ of $x_0$, so that after a rigid change
of coordinates, there exists a $\mathcal C^{2,\alpha}$
diffeomorphism
\[
    \Psi:U(x_0)\cap\Omega\longrightarrow\mathbb B_1^+
\]
with $\mathcal C^{2,\alpha}$ inverse such that
\[
    \Psi(U\cap\partial\Omega)
    =
    \{z_d=0\}\cap\mathbb B_1 .
\]
Write $z=\Psi(x)$. Recall that
$u\in E_p$ is the solution to~\eqref{eq:Lp-system}.
Set $v(z)=u(\Psi^{-1}(z))$ and
$J(x)=\det D\Psi(x)$. Then $v$ satisfies
\begin{equation}\label{eq:flattened-pde-interior}
    \operatorname{div}_z(\widetilde A\nabla_zv)=\widetilde f
    \quad\text{in }\mathbb B_1^+,
\end{equation}
where
\[
\widetilde A(z)
=
\frac{1}{J(\Psi^{-1}(z))}
D\Psi(\Psi^{-1}(z))
A(\Psi^{-1}(z))
D\Psi(\Psi^{-1}(z))^{\mathsf T}
\]
and
\[
\widetilde f(z)
=
\frac{1}{J(\Psi^{-1}(z))}
f(\Psi^{-1}(z)).
\]
The boundary condition becomes
\begin{equation}\label{eq:flattened-pde-boundary}
    \langle\widetilde A\nabla_zv,e_d\rangle=0
    \quad\text{on }\{z_d=0\}\cap\mathbb B_1 .
\end{equation}
Here, as $A$ is of class $\cC^{1,\alpha}$ and
$\Psi,\Psi^{-1}$ are $\cC^{2,\alpha}$,
$\widetilde A$ is also $\cC^{1,\alpha}$.
The system defined in
\eqref{eq:flattened-pde-interior}--\eqref{eq:flattened-pde-boundary}
is the \emph{flattened variable-coefficient problem} on the half-ball
$\mathbb B_1^+$. By the same argument as in
\cref{rmk:green-function},
\[
    \widetilde G(z,y)
    =
    G(\Psi^{-1}(z),\Psi^{-1}(y))
\]
is the Green function associated with the operator
$v\mapsto{\rm div}(\widetilde A\nabla_zv)$. That is,
\begin{equation}\label{eq:green-function-transformed}
    {\rm div}
    \bigl(\widetilde A\nabla_z\widetilde G(\cdot,y)\bigr)
    =
    \delta_y-\frac{1}{J(\Psi^{-1}(z))}
\end{equation}
in the distributional sense.

We then introduce the frozen-coefficients problem. For any
$y\in\BB_1^+$,\footnote{We keep the same notation $y$ to emphasize
that the pole is generated by $y$.}
set $A_0:=\widetilde A(y)$. Define the reflection of $y$ by
\[
    y^\ast
    =
    y-2\,\frac{y_d}{\langle e_d,A_0e_d\rangle}A_0e_d .
\]
Then $(y^\ast)_d=-y_d$ and
\[
    \Gamma_{A_0}(z,y)
    =
    \Phi_{A_0}(z,y)+\Phi_{A_0}(z,y^\ast)
\]
satisfies
\begin{equation}\label{eq:freezing-coeff-PDE}
\begin{cases}
\operatorname{div}_z(A_0\nabla_z\Gamma_{A_0}(\cdot,y))
=
-\delta_y
&\text{in }\BB_1^+,\\[0.3em]
\langle A_0\nabla_z\Gamma_{A_0}(\cdot,y),e_d\rangle=0
&\text{on }\{z_d=0\}\cap\BB_1.
\end{cases}
\end{equation}
We note that
$\operatorname{div}_z(A_0\nabla_z\Phi_{A_0}(\cdot,y))
=-\delta_y$ in $\BB_1^+$ and the reflected part
$\Phi_{A_0}(\cdot,y^\ast)$ is introduced solely to correct the
boundary condition. In what follows, we compare $\widetilde G$ with
$\Gamma_{A_0}(\cdot,y)$.

\begin{lemma}[Boundary comparison]\label{lemma:boundary-comparison}
There exists a constant $C>0$, independent of $x_0$, such that for
any multi-index $\beta$ with $|\beta|\in\{1,2\}$ and any
$(z,y)\in\mathbb B_{1/2}^+\times\mathbb B_{1/2}^+$, $z\neq y$,
\begin{equation}\label{eq:Green-function-boundary}
    \left|
        \partial_z^\beta\Gamma_{\widetilde A(y)}(z,y)
        +
        \partial_z^\beta
        G(\Psi^{-1}(z),\Psi^{-1}(y))
    \right|
    \leq
    \frac{C}{\|z-y\|^{d-2+|\beta|-\alpha}} .
\end{equation}
As a consequence, for any
$x,y\in\Psi^{-1}(\BB_{1/2}^+)$, $x\neq y$,
\begin{equation}\label{eq:Green-function-boundary-pull-back}
    \left|
        \partial_x^\beta
        \Gamma_{\widetilde A(\Psi(y))}
        (\Psi(x),\Psi(y))
        +
        \partial_x^\beta G(x,y)
    \right|
    \leq
    \frac{C_1}{\|x-y\|^{d-2+|\beta|-\alpha}},
\end{equation}
where the constant $C_1>0$ depends on $C$ and $\Omega$.
\end{lemma}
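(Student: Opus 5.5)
We compare the flattened Green function $\widetilde G(\cdot,y)$ with the frozen, reflected kernel $\Gamma_{A_0}(\cdot,y)$, where $A_0=\widetilde A(y)$, using the same freezing-of-coefficients perturbation that gives \cref{lemma:interior-comparison}. The work is done on the half-ball. Fix $y\in\BB_{1/2}^+$ and set
\[
w(z):=\widetilde G(z,y)+\Gamma_{A_0}(z,y).
\]
By \eqref{eq:green-function-transformed} and \eqref{eq:freezing-coeff-PDE}, the Dirac masses cancel. In $\BB_1^+$ we get
\[
\operatorname{div}(\widetilde A\nabla w)=-\operatorname{div}\bigl((\widetilde A-A_0)\nabla\Gamma_{A_0}(\cdot,y)\bigr)-J^{-1}\circ\Psi^{-1},
\]
and on the flat part $\{z_d=0\}\cap\BB_1$ the conormal derivative satisfies
\[
\langle\widetilde A\nabla w,e_d\rangle=\langle(\widetilde A-A_0)\nabla\Gamma_{A_0},e_d\rangle.
\]
Since $\widetilde A\in\cC^{1,\alpha}$, we have $|\widetilde A(z)-A_0|\lesssim\|z-y\|$. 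Together with the explicit bounds $|\partial^\beta\Gamma_{A_0}(z,y)|\lesssim\|z-y\|^{2-d-|\beta|}$, this makes the forcing $F:=(\widetilde A-A_0)\nabla\Gamma_{A_0}$ satisfy $|F|\lesssim\|z-y\|^{2-d}$. In addition, $F$ is $\cC^{1,\alpha}$ away from $y$, with derivative bounds that gain exactly one power of $\|z-y\|$ over those of $\nabla\Gamma_{A_0}$.

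We then run a dyadic, scaled Schauder argument. Fix $z$ with $r:=\|z-y\|$ and rescale the region $\BB_{r/2}(z)\cap\BB_1^+$ (an interior ball or a boundary half-ball) to unit size via $\zeta\mapsto z+r\zeta$. After rescaling, the coefficients keep their ellipticity, and their $\cC^{1,\alpha}$ norm stays bounded. \cite[Lemma~6.29]{GilbargTrudinger.Book} handles the oblique boundary case and \cite[Corollary~6.3]{GilbargTrudinger.Book} the interior case. These give
\[
r^{|\beta|}|\partial^\beta w(z)|\lesssim \|w-c\|_{\infty,\BB_r(z)}+r\,\|F\|_{\cC^{0,\alpha}\text{-scaled}}+r^2,
\]
with constant $c$. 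The forcing contributes $r\cdot r^{2-d}\cdot$ (scale factors), which is of order $r^{3-d}\le r^{2-d+\alpha}$. It remains to show that the oscillation of $w$ on $\BB_r(z)$ is $O(r^{2-d+\alpha})$ when $d\geq3$, with the analogous logarithmic version when $d=2$. We expect this to be the main obstacle.

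To handle it, we first try a representation formula. Apply the Green identity for $\widetilde G$ to $w$, which expresses $w$ as the Green potential of the divergence-form forcing plus the boundary flux. Then integrate by parts so that derivatives fall on $\widetilde G$, and use the uniform $L^q$ bounds of \cref{Proposition:green-function-qualitative}. This bounds $\|w\|_{L^q}$ uniformly, and then $\|\nabla w\|$ in weak $L^{d/(d-1+\alpha)}$ via the Riesz-potential estimate $\int\|z-\xi\|^{1-d}\|\xi-y\|^{1-d+\alpha}\,d\xi\lesssim\|z-y\|^{2-d+\alpha}$. Iterating this convolution estimate bootstraps a pointwise bound of $\nabla w$ by $r^{1-d+\alpha}$, which is what the scaled Schauder step needs. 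The $\alpha$-gain from the Hölder continuity of $\widetilde A$ is exactly what makes the kernel convolution converge. Near $y$, the reflection term $\Phi_{A_0}(\cdot,y^*)$ stays comparable to $\|z-y\|^{2-d}$ because $\|z-y^*\|\ge c\|z-y\|$. Uniformity in $x_0$ follows because all constants depend only on the $\cC^{2,\alpha}$ atlas and $\|A\|_{1,\alpha}$.

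Finally, \eqref{eq:Green-function-boundary-pull-back} follows from \eqref{eq:Green-function-boundary} by the chain rule with $z=\Psi(x)$. First derivatives pick up $D\Psi$, and second derivatives pick up $D^2\Psi$ times first derivatives. Each such lower-order term is bounded by $\|x-y\|^{1-d}\lesssim\|x-y\|^{d-2+2-\alpha}$-type quantities, which are absorbed into the stated bound. We also use the bi-Lipschitz equivalence $\|\Psi(x)-\Psi(y)\|\asymp\|x-y\|$.
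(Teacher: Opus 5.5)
Your setup matches the paper's. This includes the combination $w=\widetilde G+\Gamma_{A_0}$ (the paper's $\xi$), the cancellation of the Dirac masses, and the residual source and conormal datum built from $F=(\widetilde A-A_0)\nabla\Gamma_{A_0}$. (The source is $+\operatorname{div}F-1/J$; your minus sign is a harmless slip.) It also includes the reduction, via rescaled Schauder estimates, to an oscillation bound $\inf_c\|w-c\|_{\infty}\lesssim r^{2-d+\alpha}$ on a ball or half-ball of radius comparable to $r$, and the chain-rule pull-back.

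The gap is exactly the step you flag as the main obstacle, and the route you sketch does not close it.
\begin{itemize}
\item The uniform bounds of \cref{Proposition:green-function-qualitative} hold only for $q<d/(d-2)$. Inserted in the Green representation, they give $\|w\|_{L^q}\lesssim1$ in that same range, which is the integrability $\widetilde G$ and $\Gamma_{A_0}$ already have separately. After rescaling this yields only $\operatorname{osc}w\lesssim r^{-d/q}$ with $d/q>d-2$, which falls short of $r^{-(d-2-\alpha)}$.
\item Once you integrate by parts to move the derivative onto $\widetilde G$, the $L^q$ bounds on $\widetilde G$ are no longer relevant. The convolution $\int\|z-\xi\|^{1-d}\|\xi-y\|^{1-d+\alpha}\,d\xi$ presupposes $|\nabla\widetilde G(z,\xi)|\lesssim\|z-\xi\|^{1-d}$. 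That is exactly what this lemma, together with \cref{lemma:interior-comparison}, is supposed to establish, so the bootstrap is circular.
\item The exponents are also off. The residual source has size $\|\xi-y\|^{1-d}$, since the $\cC^{1,\alpha}$ coefficient gives a full power, not an $\alpha$-gain. And weak $L^{d/(d-1+\alpha)}$ is weaker than what $\nabla\Gamma_{A_0}$ satisfies on its own.
\item A genuine Levi parametrix, convolving against the explicit $\Gamma$ rather than $\widetilde G$, would avoid the circularity. It would, however, need boundary-layer corrections for the variable conormal condition and an identification of the constructed kernel with $G$, and you provide neither.
\item For $d=2$ a logarithmic oscillation bound would not suffice: you need $\operatorname{osc}w\lesssim r^{\alpha}$ to reach $|\nabla w|\lesssim r^{-1+\alpha}$.
\end{itemize}

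The missing idea is that the residual data of $w$ are one order less singular than a Dirac mass. For every $1<p<d/(d-1)$, uniformly in $y$, one has $a=\operatorname{div}F-1/J\in L^p$ and the conormal datum lies in $W^{1-1/p,p}$ of the boundary. The paper proceeds as follows.
\begin{itemize}
\item It places $w$ on a fixed smooth domain $\Omega''$ with $\BB_{13/16}^+\subset\Omega''\subset\BB_{7/8}^+$.
\item It lifts the boundary datum by a trace extension and solves the remaining homogeneous Neumann problem via \cref{theorem:solvable-lp}.
\item It shows by duality that $w$ differs from this $W^{2,p}$ solution by a constant, so $\inf_c\|w-c\|_{W^{2,p}(\Omega'')}\lesssim1$.
\item Sobolev embedding then puts $w$ in $L^q$ for all $q<d/(d-3)$, strictly beyond $d/(d-2)$.
\item It takes $q=d/(d-2-\alpha)$. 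When $z_d>R/4$ this goes into the rescaled local maximum principle. When $z_d\le R/4$ it goes into the rescaled boundary $W^{2,p}$ iteration on a half-ball recentred at the boundary projection. Either way the result is exactly $\operatorname{osc}w\lesssim R^{-(d-2-\alpha)}$.
\item For $d=2$, Morrey's inequality with $2/(2-\alpha)<p<2$ gives $[w]_{0,\alpha}\lesssim1$.
\end{itemize}
So the estimate is driven by a mapping property of the solution operator on $\Omega''$, not by integrability of $G$.
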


\begin{proof}
Note that~\eqref{eq:Green-function-boundary-pull-back} follows
from~\eqref{eq:Green-function-boundary}, the chain rule and the fact
that $\Psi$ is $\cC^{2,\alpha}$. We now establish
\eqref{eq:Green-function-boundary}.

In the sequel, we use $z$ to denote the variable in the PDE and
$z^*$ to denote the point in the statement. Then
$z^*,y\in\BB_{1/2}^+$ with $z^*\neq y$. Write
\[
    z^*=(z^*_{-d},z_d^*)
    \qquad\text{and}\qquad
\bar z:=(z^*_{-d},0).
\]
Set $R=\|z^*-y\|$. We may also assume that
$R\leq1/4$, in which case $y\in\BB_{3/4}^+$. Otherwise,
by~\cref{Proposition:green-function-qualitative}, the derivatives
of $G(\Psi^{-1}(\cdot),\Psi^{-1}(y))$ are uniformly bounded away
from the pole. The same is true for
$\Gamma_{\widetilde A(y)}(\cdot,y)$ by its explicit formula, and
the result follows immediately.

Define
\[
    \widetilde G(z,y)
    :=
    G(\Psi^{-1}(z),\Psi^{-1}(y))
\]
and
\[
    \xi(z)
    :=
    \Gamma_{\widetilde A(y)}(z,y)
    +
    \widetilde G(z,y).
\]

We first make precise the equations satisfied by these functions.
Let $D\subset\mathbb R^d$ be open, let
$u\in L^1_{\mathrm{loc}}(D)$, and let
$T\in\mathcal D'(D)$, where $\mathcal D'(D)
    :=
    \bigl(\mathcal C_c^\infty(D)\bigr)*$
denotes the space of distributions on $D$. We say that
\[
    \mathcal L_{\widetilde A}u=T
    \qquad\text{in }\mathcal D'(D)
\]
if
\[
    \int_D
    u\,\mathcal L_{\widetilde A}\rho\,\rd z
    =
    T(\rho)
\]
for every $\rho\in\mathcal C_c^\infty(D)$.
In particular, if $T=f+\lambda\delta_y$, where
$f\in L^1_{\mathrm{loc}}(D)$, then this means
\[
    \int_D
    u\,\mathcal L_{\widetilde A}\rho\,\rd z
    =
    \int_D f\rho\,\rd z
    +
    \lambda\rho(y).
\]
With this convention,
\eqref{eq:green-function-transformed} means that
\begin{equation}\label{eq:distribution-G-transformed}
\int_{\BB_1^+}
\widetilde G(z,y)
\mathcal L_{\widetilde A}\rho(z)\,\rd z
=
\rho(y)
-
\int_{\BB_1^+}
\frac{\rho(z)}
{J(\Psi^{-1}(z))}\,\rd z
\end{equation}
for every $\rho\in\mathcal C_c^\infty(\BB_1^+)$. Likewise,
\eqref{eq:freezing-coeff-PDE} means that
\begin{equation}\label{eq:distribution-Gamma}
\int_{\BB_1^+}
\Gamma_{\widetilde A(y)}(z,y)
\mathcal L_{\widetilde A(y)}\rho(z)\,\rd z
=
-\rho(y)
\end{equation}
for every $\rho\in\mathcal C_c^\infty(\BB_1^+)$. Set
\[
    F(z,y)
    :=
    \bigl(\widetilde A(z)-\widetilde A(y)\bigr)
    \nabla_z\Gamma_{\widetilde A(y)}(z,y).
\]
The explicit formula for
$\Gamma_{\widetilde A(y)}$ and the
$\mathcal C^{1,\alpha}$ regularity of $\widetilde A$ give
\[
    |F(z,y)|
    \lesssim
    \|z-y\|^{2-d}.
\]
Thus $F(\cdot,y)\in L^1_{\mathrm{loc}}(\BB_1^+)$. Away from $y$,
define
\[
    c(z,y):={\rm div}_z F(z,y).
\]
A direct computation gives
\[
    |c(z,y)|
    \lesssim
    \|z-y\|^{1-d}.
\]
Moreover, $c$ represents the distributional divergence of $F$ on
all of $\BB_1^+$. Indeed, for
$\rho\in\mathcal C_c^\infty(\BB_1^+)$, integration by parts on
$\BB_1^+\setminus\overline{\BB_\varepsilon(y)}$ gives
\[
\begin{aligned}
-\int_{\BB_1^+\setminus\BB_\varepsilon(y)}
F(z,y)\cdot\nabla\rho(z)\,\rd z
&=
\int_{\BB_1^+\setminus\BB_\varepsilon(y)}
c(z,y)\rho(z)\,\rd z
\\
&\quad+
\int_{\partial\BB_\varepsilon(y)}
\rho(z)F(z,y)\cdot
\frac{z-y}{\|z-y\|}\,\rd S.
\end{aligned}
\]
The last term tends to zero because
\[
\int_{\partial\BB_\varepsilon(y)}
|F(z,y)|\,\rd S
\lesssim
\varepsilon^{2-d}\varepsilon^{d-1}
=
\varepsilon.
\]
Letting $\varepsilon\downarrow0$ therefore yields
\begin{equation}\label{eq:distribution-div-F}
    -\int_{\BB_1^+}
    F(z,y)\cdot\nabla\rho(z)\,\rd z
    =
    \int_{\BB_1^+}
    c(z,y)\rho(z)\,\rd z.
\end{equation}

We now compute the equation satisfied by $\xi$. For every
$\rho\in\mathcal C_c^\infty(\BB_1^+)$,
\begin{align*}
&
\int_{\BB_1^+}
\Gamma_{\widetilde A(y)}(z,y)
\mathcal L_{\widetilde A}\rho(z)\,\rd z
\\
&\quad=
\int_{\BB_1^+}
\Gamma_{\widetilde A(y)}(z,y)
\mathcal L_{\widetilde A(y)}\rho(z)\,\rd z
-
\int_{\BB_1^+}
F(z,y)\cdot\nabla\rho(z)\,\rd z
\\
&\quad=
-\rho(y)
+
\int_{\BB_1^+}
c(z,y)\rho(z)\,\rd z,
\end{align*}
where we used
\eqref{eq:distribution-Gamma} and
\eqref{eq:distribution-div-F}. Combining this identity with
\eqref{eq:distribution-G-transformed}, we obtain
\[
    \int_{\BB_1^+}
    \xi(z)\mathcal L_{\widetilde A}\rho(z)\,\rd z
    =
    \int_{\BB_1^+}
    a(z,y)\rho(z)\,\rd z,
\]
where
\[
    a(z,y)
    :=
    -\frac{1}{J(\Psi^{-1}(z))}
    +
    c(z,y).
\]
Thus
\begin{equation}\label{eq:xi-distribution-equation}
    \mathcal L_{\widetilde A}\xi
    =
    a(\cdot,y)
    \qquad\text{in }\mathcal D'(\BB_1^+).
\end{equation}
In particular, the two Dirac masses cancel at the level of the
distributional identities.

On the flat boundary, both terms defining $\xi$ are smooth away
from the interior point $y$, and their conormal derivatives give
\[
    \left\langle
        \widetilde A(z)\nabla\xi(z),e_d
    \right\rangle
    =
    \left\langle
        \bigl(\widetilde A(z)-\widetilde A(y)\bigr)
        \nabla_z\Gamma_{\widetilde A(y)}(z,y),
        e_d
    \right\rangle
    =:b(z,y).
\]
Consequently,
\begin{equation}\label{eq:growth-control-a-b}
    |a(z,y)|
    \lesssim
    \frac{1}{\|z-y\|^{d-1}},
    \qquad
    |b(z,y)|
    \lesssim
    \frac{1}{\|z-y\|^{d-2}},
    \qquad
    |\nabla_zb(z,y)|
    \lesssim
    \frac{1}{\|z-y\|^{d-1}}.
\end{equation}

Choose a $\mathcal C^\infty$ domain $\Omega''$ such that
\[
    \BB_{13/16}^+
    \subset
    \Omega''
    \subset
    \BB_{7/8}^+,
\]
and such that its boundary agrees with $\{z_d=0\}$ in
$\BB_{13/16}$. On the whole boundary of $\Omega''$, define
\[
    \widehat b(z,y)
    :=
    \mathcal B_{\widetilde A}\xi(z)
    =
    \left\langle
        \widetilde A(z)\nabla\xi(z),
        \nu_{\Omega''}(z)
    \right\rangle.
\]
On the flat part, where $\nu_{\Omega''}=-e_d$, one has
$\widehat b=-b$. On the artificial part of the boundary, and in a
fixed neighborhood of the junction between the flat and artificial
parts, the distance to $y\in\BB_{3/4}^+$ is bounded from below.
Hence,
\cref{Proposition:green-function-qualitative} and the explicit
formula for $\Gamma_{\widetilde A(y)}$ show that $\widehat b$ is
uniformly bounded in $\mathcal C^1$ there. On the flat part, \eqref{eq:growth-control-a-b} and scaling in
$\mathbb R^{d-1}$ give
\[
    \|b(\cdot,y)\|_
    {W^{1-1/p,p}
    (\partial\Omega''\cap\{z_d=0\})}
    \leq C,
    \qquad
    1<p<\frac{d}{d-1}.
\]
Using boundary charts and a smooth partition of unity, we therefore obtain
\begin{equation}\label{eq:a-b-global-bound}
    \|a(\cdot,y)\|_{L^p(\Omega'')}
    +
    \|\widehat b(\cdot,y)\|_
    {W^{1-1/p,p}(\partial\Omega'')}
    \leq C,
    \qquad
    1<p<\frac{d}{d-1},
\end{equation}
uniformly in $y$. We next record the weak identity on $\Omega''$ that incorporates
both the distributional equation and the boundary condition. Let
$p'=p/(p-1)$. We claim that
\begin{equation}\label{eq:weak-xi-Neumann}
\int_{\Omega''}
\xi\,\mathcal L_{\widetilde A}\rho\,\rd z
=
\int_{\Omega''}
a(\cdot,y)\rho\,\rd z
-
\int_{\partial\Omega''}
\widehat b(\cdot,y)\rho\,\rd S
\end{equation}
for every $\rho\in W^{2,p'}(\Omega'')$ satisfying
\[
    \mathcal B_{\widetilde A}\rho=0
    \qquad\text{on }\partial\Omega''.
\]
To prove \eqref{eq:weak-xi-Neumann}, choose
$\chi\in\mathcal C_c^\infty(\Omega'')$ such that $\chi=1$ in a
neighborhood of $y$, and write
\[
    \rho=\rho_0+\rho_1,
    \qquad
    \rho_0:=\chi\rho,
    \qquad
    \rho_1:=(1-\chi)\rho.
\]
Since $\rho_0$ has compact support in $\Omega''$,
\eqref{eq:xi-distribution-equation} gives
\begin{equation}\label{eq:weak-xi-rho0}
    \int_{\Omega''}
    \xi\,\mathcal L_{\widetilde A}\rho_0\,\rd z
    =
    \int_{\Omega''}
    a(\cdot,y)\rho_0\,\rd z.
\end{equation}
This follows first for smooth compactly supported functions and
then for $\rho_0\in W^{2,p'}_0(\Omega'')$ by approximation, since
$\xi,a(\cdot,y)\in L^p(\Omega'')$.

The function $\rho_1$ vanishes in a neighborhood of $y$. By
\cref{Proposition:green-function-qualitative} and the explicit
formula for $\Gamma_{\widetilde A(y)}$, the function $\xi$ is
$\mathcal C^{2,\alpha}$ on a neighborhood of the support of
$\rho_1$. The second Green identity therefore gives
\[
\begin{aligned}
\int_{\Omega''}
\xi\,\mathcal L_{\widetilde A}\rho_1\,\rd z
&=
\int_{\Omega''}
\rho_1\,\mathcal L_{\widetilde A}\xi\,\rd z+
\int_{\partial\Omega''}
\left\{
    \xi\mathcal B_{\widetilde A}\rho_1
    -
    \rho_1\mathcal B_{\widetilde A}\xi
\right\}\rd S.
\end{aligned}
\]
The identity for $\rho_1\in W^{2,p'}$ follows by smooth
approximation and continuity of the trace maps. Because $\chi$ is
compactly supported in $\Omega''$, it vanishes near
$\partial\Omega''$. Hence
\[
    \rho_1=\rho,
    \qquad
    \mathcal B_{\widetilde A}\rho_1
    =
    \mathcal B_{\widetilde A}\rho
    =0
    \quad\text{on }\partial\Omega''.
\]
Since
\[
    \mathcal L_{\widetilde A}\xi=a(\cdot,y)
    \quad\text{away from }y,
    \qquad
    \mathcal B_{\widetilde A}\xi=\widehat b(\cdot,y),
\]
we obtain
\begin{equation}\label{eq:weak-xi-rho1}
    \int_{\Omega''}
    \xi\,\mathcal L_{\widetilde A}\rho_1\,\rd z
    =
    \int_{\Omega''}
    a(\cdot,y)\rho_1\,\rd z
    -
    \int_{\partial\Omega''}
    \widehat b(\cdot,y)\rho\,\rd S.
\end{equation}
Adding \eqref{eq:weak-xi-rho0} and
\eqref{eq:weak-xi-rho1} proves
\eqref{eq:weak-xi-Neumann}.

Taking $\rho=1$ in \eqref{eq:weak-xi-Neumann} gives the
compatibility relation
\begin{equation}\label{eq:compatibility-xi}
    \int_{\Omega''}a(\cdot,y)\,\rd z
    =
    \int_{\partial\Omega''}
    \widehat b(\cdot,y)\,\rd S,
\end{equation}
because $\mathcal L_{\widetilde A}1=0
 $ and $
    \mathcal B_{\widetilde A}1=0.$
We now remove the nonhomogeneous boundary condition. Uniform
ellipticity implies that $ \left\langle
        \widetilde A\nu_{\Omega''},\nu_{\Omega''}
    \right\rangle$ 
is bounded away from zero. Hence
\[
    \frac{\widehat b}
    {\langle
        \widetilde A\nu_{\Omega''},\nu_{\Omega''}
    \rangle}
    \in
    W^{1-1/p,p}(\partial\Omega'')
\]
with uniformly bounded norm. By the higher-order trace theorem
\cite[Theorem~7.8]{Taira.2024.book}, there exists
$h\in W^{2,p}(\Omega'')$ such that
\[
    h=0,
    \qquad
    \partial_{\nu_{\Omega''}}h
    =
    \frac{\widehat b}
    {\langle
        \widetilde A\nu_{\Omega''},\nu_{\Omega''}
    \rangle}
    \quad\text{on }\partial\Omega'',
\]
and $\|h\|_{W^{2,p}(\Omega'')}\leq C.$ 
Since the trace of $h$ vanishes, its tangential gradient vanishes
on $\partial\Omega''$. Thus
\[
    \nabla h
    =
    \partial_{\nu_{\Omega''}}h\,\nu_{\Omega''}
    \qquad\text{on }\partial\Omega'',
\]
and consequently
\[
    \mathcal B_{\widetilde A}h
    =
    \widehat b
    \qquad\text{on }\partial\Omega''.
\]
By \eqref{eq:compatibility-xi} and the divergence theorem applied
to $h$,
\[
\begin{aligned}
\int_{\Omega''}
\left(
    a-\mathcal L_{\widetilde A}h
\right)\rd z
&=
\int_{\partial\Omega''}
\left(
    \widehat b-\mathcal B_{\widetilde A}h
\right)\rd S
=0.
\end{aligned}
\]
Therefore, by \cref{theorem:solvable-lp}, there exists a unique
mean-zero function $v\in W^{2,p}(\Omega'')$ satisfying
\[
\begin{cases}
\mathcal L_{\widetilde A}v
=
a-\mathcal L_{\widetilde A}h
&\text{in }\Omega'',\\
\mathcal B_{\widetilde A}v=0
&\text{on }\partial\Omega'',
\end{cases}
\]
and $ \|v\|_{W^{2,p}(\Omega'')}\leq C.$
Set $    w:=h+v.$ 
Then $w\in W^{2,p}(\Omega'')$ and
\[
\begin{cases}
\mathcal L_{\widetilde A}w=a(\cdot,y)
&\text{in }\Omega'',\\
\mathcal B_{\widetilde A}w=\widehat b(\cdot,y)
&\text{on }\partial\Omega''.
\end{cases}
\]
For every
$\rho\in W^{2,p'}(\Omega'')$ satisfying
$\mathcal B_{\widetilde A}\rho=0$, the second Green identity gives
\begin{equation}\label{eq:weak-w-Neumann}
    \int_{\Omega''}
    w\,\mathcal L_{\widetilde A}\rho\,\rd z
    =
    \int_{\Omega''}
    a(\cdot,y)\rho\,\rd z
    -
    \int_{\partial\Omega''}
    \widehat b(\cdot,y)\rho\,\rd S.
\end{equation}
Subtracting \eqref{eq:weak-w-Neumann} from
\eqref{eq:weak-xi-Neumann}, we obtain
\begin{equation}\label{eq:xi-w-annihilates-range}
    \int_{\Omega''}
    (\xi-w)\mathcal L_{\widetilde A}\rho\,\rd z
    =0
\end{equation}
for every
$\rho\in W^{2,p'}(\Omega'')$ satisfying
$\mathcal B_{\widetilde A}\rho=0$. Let now $g\in L^{p'}_0(\Omega'')$. By
\cref{theorem:solvable-lp}, there exists a mean-zero
$\rho\in W^{2,p'}(\Omega'')$ satisfying
\[
\begin{cases}
\mathcal L_{\widetilde A}\rho=g
&\text{in }\Omega'',\\
\mathcal B_{\widetilde A}\rho=0
&\text{on }\partial\Omega''.
\end{cases}
\]
Using this function in
\eqref{eq:xi-w-annihilates-range} yields
\[
    \int_{\Omega''}(\xi-w)g\,\rd z=0
    \qquad
    \text{for every }g\in L^{p'}_0(\Omega'').
\]
Thus $\xi-w$ belongs to the annihilator of
$L^{p'}_0(\Omega'')$ in $L^p(\Omega'')$, which is the space of
constant functions. Hence there exists $c\in\mathbb R$ such that
\[
    \xi-h-v=c
    \qquad\text{a.e. in }\Omega''.
\]
Consequently,
\begin{equation}\label{eq:bounded:LP-boundary}
\begin{aligned}
    \inf_{c\in\mathbb R}
    \|\xi-c\|_{W^{2,p}(\Omega'')}
    &\leq
    \|h+v\|_{W^{2,p}(\Omega'')}
    \\
    &\leq
    \|h\|_{W^{2,p}(\Omega'')}
    +
    \|v\|_{W^{2,p}(\Omega'')}
    \lesssim1,
\end{aligned}
\end{equation}
for every $    1<p<\frac{d}{d-1}.$ 
We consider the following two regimes separately:
\[
    {\rm case\ 1}:\ z_d^*>\frac14R
    \qquad\text{and}\qquad
    {\rm case\ 2}:\ z_d^*\leq\frac14R.
\]

\textit{Case 1.}
Since $z^*\in\BB_{1/2}^+$ and $R\leq1/4$, it follows that
\[
    \BB_{R/4}(z^*)
    \subset\BB_{9/16}^+
    \subset\Omega''.
\]
Moreover, if $z\in\BB_{R/8}(z^*)$, then
\begin{equation}\label{eq:boundary-interior-away-pole}
    \|z-y\|\geq\frac{7R}{8},
    \qquad
    \|z-y^\ast\|\geq cR,
\end{equation}
where the second inequality follows from the definition of the
$A_0$-reflection and uniform ellipticity. Indeed, reflection across
$\{z_d=0\}$ is orthogonal for the metric induced by $A_0^{-1}$, and
the Euclidean and $A_0^{-1}$ norms are uniformly equivalent. 
Thus both poles of
\[
    \Gamma_{\widetilde A(y)}(\cdot,y)
    =
    \Phi_{\widetilde A(y)}(\cdot,y)
    +
    \Phi_{\widetilde A(y)}(\cdot,y^\ast)
\]
are away from $\BB_{R/8}(z^*)$. By Schauder's interior estimates
\cite[Corollary~6.3]{GilbargTrudinger.Book}, for all $i,j\in[d]$,
\begin{equation}\label{eq:boundary-interior-C2alpha}
\begin{aligned}
&R\|\partial_i{\xi}\|_
{\infty,\BB_{R/16}(z^*)}
+
R^2\|\partial_{ij}\xi\|_
{\infty,\BB_{R/16}(z^*)}
\\
&\qquad\lesssim
\inf_{c\in\R}
\|\xi-c\|_{\infty,\BB_{R/8}(z^*)}
+
R^{2+\alpha}
\|a(\cdot,y)\|_{0,\alpha,\BB_{R/8}(z^*)}.
\end{aligned}
\end{equation}
Furthermore, using the weak local maximum principle
\cite[Theorem~9.20]{GilbargTrudinger.Book}, followed by
\eqref{eq:growth-control-a-b} and
\eqref{eq:boundary-interior-away-pole}, we derive
\begin{equation}\label{eq:boundary-interior-infinity}
\begin{aligned}
\inf_{c\in\R}
\|\xi-c\|_{\infty,\BB_{R/8}(z^*)}
&\lesssim
\frac{
\inf_{c\in\R}
\|\xi-c\|_{L^q(\BB_{R/4}(z^*))}
}
{R^{d/q}}
+
R\|a(\cdot,y)\|_{L^d(\BB_{R/8}(z^*))}
\\
&\lesssim
\frac{
\inf_{c\in\R}
\|\xi-c\|_{L^q(\BB_{R/4}(z^*))}
}
{R^{d/q}}
+
\frac{1}{R^{d-3}}.
\end{aligned}
\end{equation}
Fix $c\in\R$. By the Sobolev embedding theorem
\cite[Corollary~9.15]{brezis2011functional}, for $d\geq3$,
\begin{equation}\label{eq:boundary-interior-Sobolev-embedding}
    \|\xi-c\|_{L^q(\BB_{R/4}(z^*))}
    \lesssim
    \|\xi-c\|_{W^{2,p}(\BB_{R/4}(z^*))},
    \qquad
    q<\frac{dp}{d-2p},
\end{equation}
with the convention $1/0=\infty$. Taking
$p<d/(d-1)$ sufficiently close to $d/(d-1)$,
\eqref{eq:bounded:LP-boundary} yields
\begin{equation}\label{eq:xi-q}
    \inf_{c\in\R}
    \|\xi-c\|_{L^q(\BB_{R/4}(z^*))}
    \lesssim1,
    \qquad
    q<\frac{d}{d-3},
    \quad d\geq3.
\end{equation}
Recall that $\alpha\in(0,1)$. Taking
\[
    q=\frac{d}{d-2-\alpha}
\]
in~\eqref{eq:boundary-interior-infinity} gives
\[
    \inf_{c\in\R}
    \|\xi-c\|_{\infty,\BB_{R/8}(z^*)}
    \lesssim
    \frac{1}{R^{d-2-\alpha}}
    +
    \frac{1}{R^{d-3}}
    \lesssim
    \frac{1}{R^{d-2-\alpha}}.
\]
Together with~\eqref{eq:boundary-interior-C2alpha}, this shows
\[
    \|\partial_{ij}\xi\|_
    {\infty,\BB_{R/16}(z^*)}
    \lesssim
    \frac{1}{R^2}
    \frac{1}{R^{d-2-\alpha}}
    +
    \frac{1}{R^{d-1}}
    \lesssim
    \frac{1}{R^{d-\alpha}},
\]
because
\[
    \|a(\cdot,y)\|_
    {0,\alpha,\BB_{R/8}(z^*)}
    \lesssim
    \frac{1}{R^{d-1+\alpha}}.
\]
By the same estimate,
\[
    \|\partial_i\xi\|_
    {\infty,\BB_{R/16}(z^*)}
    \lesssim
    \frac{1}{R^{d-1-\alpha}}.
\]
This concludes the first case when $d\geq3$. 
When $d=2$, choose
\[
    \frac{2}{2-\alpha}<p<2.
\]
By~\eqref{eq:bounded:LP-boundary} and Morrey's inequality,
\[
    [\xi]_{0,\alpha;\Omega''}\lesssim1.
\]
Consequently,
\[
    \inf_{c\in\R}
    \|\xi-c\|_{\infty,\BB_{R/8}(z^*)}
    \lesssim R^\alpha.
\]
Using this estimate in
\eqref{eq:boundary-interior-C2alpha} gives
\[
    \|D^2\xi\|_{\infty,\BB_{R/16}(z^*)}
    \lesssim R^{-2+\alpha},
    \qquad
    \|\nabla\xi\|_{\infty,\BB_{R/16}(z^*)}
    \lesssim R^{-1+\alpha},
\]
which are precisely the desired estimates for $d=2$. 

\textit{Case 2.}
We now focus on the second case. For ease of notation, replace
$\xi$ by $\xi-c_0$, where $c_0$ realizes the minimum in
\eqref{eq:bounded:LP-boundary}. Let $r=R/2$. Since
\[
    \|z^*-\bar z\|
    =
    z_d^*
    \leq
    \frac{R}{4}
    =
    \frac r2,
\]
we have
\[
    z^*\in\BB_{r/2}^+(\bar z).
\]
Moreover,
\[
    \|y-\bar z\|
    \geq
    \|y-z^*\|-\|z^*-\bar z\|
    \geq
    R-\frac R4
    =
    \frac{3R}{4},
\]
and therefore
\begin{equation}\label{eq:case2-distance}
    {\rm dist}
    \left(
        y,\BB_{3r/4}^+(\bar z)
    \right)
    \geq
    \frac{3R}{4}-\frac{3R}{8}
    =
    \frac{3R}{8}.
\end{equation}
Notice also that
$\BB_{3r/4}^+(\bar z)\subset\BB_{5/8}^+$.

Set
\[
    w(\zeta)
    :=
    \xi(\bar z+r\zeta).
\]
Then
\[
    \nabla w(\zeta)
    =
    r(\nabla\xi)(\bar z+r\zeta),
    \qquad
    \nabla^2w(\zeta)
    =
    r^2(\nabla^2\xi)(\bar z+r\zeta).
\]
Moreover,
\[
\left\langle
    \widetilde A(\bar z+r\zeta)
    \nabla w(\zeta),
    e_d
\right\rangle
=
r\,b(\bar z+r\zeta,y)
=:\widetilde b(\zeta,y)
\quad\text{on }T_1
\]
and
\[
{\rm div}
\left(
    \widetilde A(\bar z+r\zeta)
    \nabla w(\zeta)
\right)
=
r^2a(\bar z+r\zeta,y)
=:\widetilde a(\zeta,y)
\quad\text{in }\BB_1^+.
\]
Using the boundary Schauder estimate
\cite[Lemma~6.29]{GilbargTrudinger.Book} from $\BB_{5/8}^+$ to
$\BB_{1/2}^+$, we obtain
\begin{align}
&\|\nabla w\|_{\infty,\BB_{1/2}^+}
+
\|D^2w\|_{\infty,\BB_{1/2}^+}
\notag\\
&\qquad\lesssim
\inf_{c\in\R}
\|w-c\|_{\infty,\BB_{5/8}^+}
+
\|\widetilde a(\cdot,y)\|_
{0,\alpha,\BB_{5/8}^+}
+
\|\widetilde b(\cdot,y)\|_
{1,\alpha,T_{5/8}}.
\label{eq:w-2-alpha}
\end{align}
By~\eqref{eq:case2-distance} and
\eqref{eq:growth-control-a-b},
\[
    \|\widetilde a(\cdot,y)\|_
    {0,\alpha,\BB_{5/8}^+}
    +
    \|\widetilde b(\cdot,y)\|_
    {1,\alpha,T_{5/8}}
    \lesssim
    \frac{1}{R^{d-3}}.
\]

It remains to control
$\inf_c\|w-c\|_{\infty,\BB_{5/8}^+}$.
Set
\[
    \rho_m
    :=
    \frac58+2^{-m-3},
    \qquad m\geq0,
\]
so that $\rho_0=3/4$ and $\rho_m\downarrow5/8$.
Lemma~19.1 in \cite{Taira.2024.book}, followed by the trace theorem
\cite[Theorem~7.7]{Taira.2024.book}, gives, for every $p>1$,
\begin{equation}\label{eq:W2p-w}
\begin{aligned}
\|w\|_{W^{2,p}(\BB_{\rho_{m+1}}^+)}
\lesssim{}&
\|w\|_{L^p(\BB_{\rho_m}^+)}
+
\|\widetilde a(\cdot,y)\|_
{L^p(\BB_{\rho_m}^+)}
\\
&+
\|\widetilde b(\cdot,y)\|_
{W^{1-1/p,p}(T_{\rho_m})}.
\end{aligned}
\end{equation}
All the physical half-balls
$\bar z+r\BB_{\rho_m}^+$ remain at distance comparable to $R$ from
the pole. Hence,
\begin{equation}\label{eq:scaled-data-bound}
    \|\widetilde a(\cdot,y)\|_
    {L^p(\BB_{\rho_m}^+)}
    +
    \|\widetilde b(\cdot,y)\|_
    {W^{1-1/p,p}(T_{\rho_m})}
    \lesssim
    \frac{1}{R^{d-3}}.
\end{equation}

Assume first that $d\geq3$ and put
\[
    p_0
    :=
    \frac{d}{d-2-\alpha}.
\]
By~\eqref{eq:bounded:LP-boundary} and the Sobolev embedding used in
\eqref{eq:xi-q},
\[
    \|\xi\|_{L^{p_0}(\Omega'')}
    \lesssim1.
\]
Consequently,
\[
\begin{aligned}
    \|w\|_{L^{p_0}(\BB_{3/4}^+)}
    &=
    r^{-d/p_0}
    \|\xi\|_{L^{p_0}
    (\BB_{3r/4}^+(\bar z))}
    \\
    &\lesssim
    \frac{1}{R^{d-2-\alpha}}.
\end{aligned}
\]
Apply~\eqref{eq:W2p-w} with $p=p_0$. If $p_0>d/2$, Morrey's
inequality gives the required $L^\infty$ bound on a smaller
half-ball. Otherwise, define
\[
    p_{m+1}
    =
    \frac{dp_m}{d-2p_m}
\]
as long as $p_m<d/2$, choosing any $p_{m+1}>d/2$ when
$p_m=d/2$. At each step, the Sobolev embedding followed by
\eqref{eq:W2p-w} gives
\[
    \|w\|_{L^{p_{m+1}}
    (\BB_{\rho_{m+1}}^+)}
    \lesssim
    \frac{1}{R^{d-2-\alpha}},
\]
where we used
\[
    \frac{1}{R^{d-3}}
    \leq
    \frac{1}{R^{d-2-\alpha}},
    \qquad R\leq1.
\]
After finitely many steps, $p_m>d/2$, and Morrey's inequality yields
\begin{equation}\label{eq:w-infinity}
    \inf_{c\in\R}
    \|w-c\|_{\infty,\BB_{5/8}^+}
    \lesssim
    \frac{1}{R^{d-2-\alpha}}.
\end{equation}
Combining~\eqref{eq:w-2-alpha} and~\eqref{eq:w-infinity}, we obtain
\[
    \|\nabla w\|_{\infty,\BB_{1/2}^+}
    +
    \|D^2w\|_{\infty,\BB_{1/2}^+}
    \lesssim
    \frac{1}{R^{d-2-\alpha}}
    +
    \frac{1}{R^{d-3}}.
\]
Since $r=R/2$,
\begin{align*}
\|D^2\xi\|_{\infty,\BB_{r/2}^+(\bar z)}
&=
r^{-2}
\|D^2w\|_{\infty,\BB_{1/2}^+}
\\
&\lesssim
\frac{1}{R^2}
\left(
    \frac{1}{R^{d-2-\alpha}}
    +
    \frac{1}{R^{d-3}}
\right)
\lesssim
\frac{1}{R^{d-\alpha}},
\end{align*}
and
\begin{align*}
\|\nabla\xi\|_{\infty,\BB_{r/2}^+(\bar z)}
&=
r^{-1}
\|\nabla w\|_{\infty,\BB_{1/2}^+}
\\
&\lesssim
\frac{1}{R}
\left(
    \frac{1}{R^{d-2-\alpha}}
    +
    \frac{1}{R^{d-3}}
\right)
\lesssim
\frac{1}{R^{d-1-\alpha}}.
\end{align*}
Since $z^*\in\BB_{r/2}^+(\bar z)$, this proves the result for
$d\geq3$.

When $d=2$, choose again
\[
    \frac{2}{2-\alpha}<p<2.
\]
By~\eqref{eq:bounded:LP-boundary} and Morrey's inequality,
$[\xi]_{0,\alpha;\Omega''}\lesssim1$. Therefore,
\[
    \inf_{c\in\R}
    \|w-c\|_{\infty,\BB_{5/8}^+}
    \lesssim R^\alpha.
\]
Furthermore, the two data terms in
\eqref{eq:w-2-alpha} are bounded by $CR$, which is at most
$CR^\alpha$ for $0<R\leq1$. It follows that
\[
    \|\nabla w\|_{\infty,\BB_{1/2}^+}
    +
    \|D^2w\|_{\infty,\BB_{1/2}^+}
    \lesssim R^\alpha.
\]
Scaling back gives
\[
    \|D^2\xi\|_{\infty,\BB_{r/2}^+(\bar z)}
    \lesssim
    R^{-2+\alpha},
    \qquad
    \|\nabla\xi\|_{\infty,\BB_{r/2}^+(\bar z)}
    \lesssim
    R^{-1+\alpha}.
\]
This concludes the proof.
\end{proof}

Combining~\cref{lemma:interior-comparison} and
\cref{lemma:boundary-comparison}, we arrive at the following key
result providing the global estimates of the Green function.

\begin{lemma}\label{lemma:global-estimate-green-function}
Under the setting of~\cref{theorem-Solvable}, fix
$x_0\neq y\in\Omega$ and let $R:=\|x_0-y\|$. Then, for any
$x\in\BB_{R/2}(x_0)$,
\[
|\partial_{ij,x}G(x,y)-\partial_{ij,x}G(x_0,y)|
\lesssim
\frac{\|x-x_0\|}{\|x-y\|^{d+1}}
+
\frac{1}{\|x-y\|^{d-\alpha}}
+
\frac{1}{\|x_0-y\|^{d-\alpha}}.
\]
\end{lemma}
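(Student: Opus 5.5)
The plan is to write $\partial_{ij,x}G(x,y)=-\partial_{ij,x}K(x,y)+E(x,y)$, where $K$ is the explicit frozen-coefficient kernel and $E$ is an error term controlled by the comparison lemmas. By the triangle inequality,
\[
|\partial_{ij,x}G(x,y)-\partial_{ij,x}G(x_0,y)|
\le |\partial_{ij}K(x,y)-\partial_{ij}K(x_0,y)|+|E(x,y)|+|E(x_0,y)|.
\]
Taking $|\beta|=2$, the comparison lemmas give $|E(\cdot,y)|\lesssim\|\cdot-y\|^{-(d-\alpha)}$. These two terms produce the last two summands of the claimed bound. The remaining kernel difference is handled by the mean value theorem applied to third derivatives of the explicit kernel.

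\emph{Choice of kernel.} The same kernel, with the pole $y$ fixed, must be used at both points $x$ and $x_0$. The key point is that the frozen matrix depends only on $y$, so the difference does not involve varying $A$.

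\emph{Interior case.} If $\operatorname{dist}(y,\partial\Omega)$ is large compared to $R$, say $\mathbb B_{4R}(y)\subset\Omega$, take $K=\Phi_{A(y)}(\cdot,y)$ and apply \cref{lemma:interior-comparison} at both $x$ and $x_0$. Both points satisfy $\|\cdot-y\|\le 2R$, since $\|x-y\|\le 3R/2$.

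\emph{Boundary case.} Otherwise $y$, and hence $x$ and $x_0$, lie within distance $O(R)$ of $\partial\Omega$. Work in a boundary chart $\Psi$ and take $K(x,y)=\Gamma_{\widetilde A(\Psi(y))}(\Psi(x),\Psi(y))$. Then apply \eqref{eq:Green-function-boundary-pull-back} at both points; for small $R$ they lie in $\Psi^{-1}(\mathbb B_{1/2}^+)$.

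\emph{Large $R$.} If $R$ exceeds a fixed threshold, or if the points fall outside a single chart, use the uniform $\mathcal C^{2,\alpha}$ bounds away from the pole from \cref{Proposition:green-function-qualitative}. Note that $\|x-y\|\ge R/2$ on $\mathbb B_{R/2}(x_0)$. These bounds give a bounded difference, which is absorbed in $\|x_0-y\|^{-(d-\alpha)}$ because $\Omega$ is bounded.

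\emph{Kernel difference along the segment.} On the segment $[x_0,x]$, every point $\xi$ satisfies $\|\xi-y\|\ge R/2$. Also $\|x-y\|\le 3R/2$, so $\|\xi-y\|\gtrsim\|x-y\|$ along the segment.

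In the interior case, $\Phi_{A_0}$ is homogeneous of degree $2-d$ (logarithmic when $d=2$), so $|\nabla^3\Phi_{A_0}(\xi,y)|\lesssim\|\xi-y\|^{-(d+1)}$ with constants depending only on ellipticity. The mean value theorem then gives
\[
|\partial_{ij}K(x,y)-\partial_{ij}K(x_0,y)|\lesssim \frac{\|x-x_0\|}{\|x-y\|^{d+1}}.
\]

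In the boundary case, the reflected term $\Phi_{A_0}(\cdot,y^*)$ has its pole at distance $\gtrsim\|\xi-y\|$ from $\xi$, by the $A_0^{-1}$-orthogonality of the reflection used in Case 1. So it obeys the same third-derivative bound. The chain rule through $\Psi\in\mathcal C^{2,\alpha}$ generates terms involving $D^2\Psi$ and $D^3\Psi$; these require a little care.

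\emph{Main obstacle: the chain-rule terms.} Differentiating $\partial_{ij}[\Gamma(\Psi(x))]$ produces $(\nabla^2\Gamma)(D\Psi,D\Psi)+(\nabla\Gamma)\,D^2\Psi$. The first part is $\mathcal C^{1}$ in $\Psi$ and is controlled by $\|\nabla^3\Gamma\|+\|\nabla^2\Gamma\|$, giving $\|x-x_0\|\big(\|x-y\|^{-(d+1)}+\|x-y\|^{-d}\big)$. The second part involves $D^2\Psi$, which is only $\alpha$-Hölder. Its difference is therefore bounded by
\[
\|x-y\|^{-(d-1)}\|x-x_0\|^\alpha+\|x-x_0\|\,\|x-y\|^{-d}\lesssim \|x-y\|^{-(d-\alpha)}+\frac{\|x-x_0\|}{\|x-y\|^{d+1}},
\]
using $\|x-x_0\|\le R\lesssim\|x-y\|$. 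All of these contributions are absorbed into the stated right-hand side, which completes the argument.
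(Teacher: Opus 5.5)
Your proposal is correct and follows essentially the same route as the paper. Both arguments reduce to small $R$ using the uniform regularity of $G$ away from the pole, then split into the interior case $\mathbb B_{4R}(y)\subset\Omega$ and the boundary case; both apply the comparison lemmas at $x$ and $x_0$ with the kernel frozen at the pole $y$, and bound the kernel difference via the mean value theorem on third derivatives plus the $\alpha$-H\"older continuity of $D^2\Psi$ (your passing mention of $D^3\Psi$ is harmless, since your final computation avoids it, as the paper does).
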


\begin{proof}
First observe that, since $x\in\BB_{R/2}(x_0)$,
\begin{equation}\label{eq:distance-comparability-global-G}
    \frac{R}{2}
    \leq
    \|x-y\|
    \leq
    \frac{3R}{2}.
\end{equation}
Moreover, for every point
$x_t:=x_0+t(x-x_0)$, $t\in[0,1]$, we have
\[
    \|x_t-y\|
    \geq
    \|x_0-y\|-\|x_t-x_0\|
    \geq
    R-\frac R2
    =
    \frac R2.
\]

We first reduce to the small-scale case. Fix $R_*>0$ sufficiently
small, depending only on $\Omega$, so that all boundary charts used
in~\cref{lemma:boundary-comparison} are available on balls of radius
comparable to $R_*$. If $R\geq R_*$, then
$\|x-y\|\geq R_*/2$. Hence $x$ and $x_0$ stay a fixed positive
distance away from the pole $y$. By the uniform local regularity in
\cref{Proposition:green-function-qualitative}, together with a finite
covering of $\overline\Omega$, we get
\[
    |\partial_{ij,x}G(x,y)|
    +
    |\partial_{ij,x}G(x_0,y)|
    \lesssim1
    \lesssim
    \frac{1}{\|x-y\|^{d-\alpha}}
    +
    \frac{1}{\|x_0-y\|^{d-\alpha}}.
\]
Henceforth, assume that $0<R<R_*$.

We split the proof into two cases.

\textit{Case 1: the interior case.}
Assume
\[
    R\leq\frac14\operatorname{dist}(y,\partial\Omega).
\]
Then $\BB_{4R}(y)\subset\Omega$, and the points $x_0,x$ satisfy
$\|x_0-y\|,\|x-y\|\leq2R$. Applying
\cref{lemma:interior-comparison}, we obtain, for $z=x,x_0$,
\begin{equation}\label{eq:interior-comparison-error-global}
    \left|
        \partial_{ij,z}G(z,y)
        +
        \partial_{ij,z}\Phi_{A(y)}(z,y)
    \right|
    \lesssim
    \frac{1}{\|z-y\|^{d-\alpha}}.
\end{equation}
The fundamental solution satisfies
\[
    |D_z^3\Phi_{A(y)}(z,y)|
    \lesssim
    \frac{1}{\|z-y\|^{d+1}}.
\]
Therefore, by the mean-value theorem,
\begin{align*}
&|\partial_{ij,x}\Phi_{A(y)}(x,y)
-\partial_{ij,x}\Phi_{A(y)}(x_0,y)|
\\
&\qquad\leq
\|x-x_0\|
\sup_{t\in[0,1]}
|D_z^3\Phi_{A(y)}(x_t,y)|
\\
&\qquad\lesssim
\frac{\|x-x_0\|}{R^{d+1}}
\lesssim
\frac{\|x-x_0\|}{\|x-y\|^{d+1}}.
\end{align*}
Combining this estimate with
\eqref{eq:interior-comparison-error-global} gives
\[
\begin{aligned}
|\partial_{ij,x}G(x,y)-\partial_{ij,x}G(x_0,y)|
\lesssim{}&
\frac{\|x-x_0\|}{\|x-y\|^{d+1}}
\\
&+
\frac{1}{\|x-y\|^{d-\alpha}}
+
\frac{1}{\|x_0-y\|^{d-\alpha}}.
\end{aligned}
\]

\textit{Case 2: the boundary case.}
Assume now
\[
    R>\frac14\operatorname{dist}(y,\partial\Omega).
\]
Choose $\bar y\in\partial\Omega$ such that
$\|y-\bar y\|=\operatorname{dist}(y,\partial\Omega)$. Then
\[
    \|x_0-\bar y\|
    \leq
    \|x_0-y\|+\|y-\bar y\|
    \leq
    5R,
\]
and, using~\eqref{eq:distance-comparability-global-G},
\[
    \|x-\bar y\|
    \leq
    \|x-y\|+\|y-\bar y\|
    \leq
    \frac{3R}{2}+4R
    \leq
    6R.
\]
Since $R<R_*$, after choosing $R_*$ sufficiently small, the points
$x,x_0$ and $y$ all lie in a single boundary coordinate chart. 
Put
\[
    z=\Psi(x),
    \qquad
    z_0=\Psi(x_0),
    \qquad
    \eta=\Psi(y),
\]
and write
\[
    \Gamma(\cdot,\eta)
    :=
    \Gamma_{\widetilde A(\eta)}(\cdot,\eta).
\]
Define 
\[
\begin{aligned}
\mathcal K_{ij}(x,y)
:={}&
\sum_{k,\ell=1}^d
\partial_{k\ell,z}\Gamma(\Psi(x),\eta)
\partial_i\Psi_k(x)\partial_j\Psi_\ell(x)
\\
&+
\sum_{k=1}^d
\partial_{k,z}\Gamma(\Psi(x),\eta)
\partial_{ij}\Psi_k(x).
\end{aligned}
\]
Applying~\cref{lemma:boundary-comparison} at both $x$ and $x_0$
gives
\begin{align}
|\partial_{ij,x}G(x,y)+\mathcal K_{ij}(x,y)|
&\lesssim
\frac{1}{\|x-y\|^{d-\alpha}},
\label{eq:chain-error-bound-global-x}
\\
|\partial_{ij,x}G(x_0,y)+\mathcal K_{ij}(x_0,y)|
&\lesssim
\frac{1}{\|x_0-y\|^{d-\alpha}}.
\label{eq:chain-error-bound-global-x0}
\end{align}
The explicit formula for $\Gamma$ gives
\[
    |D_z^m\Gamma(z,\eta)|
    \lesssim
    \|z-\eta\|^{2-d-m},
    \qquad m=1,2,3.
\]
Using these estimates, the mean-value theorem for the terms
containing $D\Psi$, and the $\alpha$-Hölder continuity of
$D^2\Psi$, we obtain
\[
    |\mathcal K_{ij}(x,y)-\mathcal K_{ij}(x_0,y)|
    \lesssim
    \frac{\|x-x_0\|}{R^{d+1}}
    +
    \frac{1}{R^{d-\alpha}}.
\]
Combining this estimate with
\eqref{eq:chain-error-bound-global-x} and
\eqref{eq:chain-error-bound-global-x0}, and using
\eqref{eq:distance-comparability-global-G}, proves the claim.
The constants are uniform because $\partial\Omega$ is covered by
finitely many boundary charts with uniformly bounded
$\cC^{2,\alpha}$ norms.
\end{proof}
We also record the following consequence, which will be used below.
The explicit estimates for $\Phi$ and $\Gamma$, together with
\cref{lemma:interior-comparison},
\cref{lemma:boundary-comparison} and the uniform regularity away
from the diagonal, give
\begin{equation}\label{eq:green-kernel-size}
    |D_x^\beta G(x,y)|
    \leq
    C\left(
        1+\|x-y\|^{2-d-|\beta|}
    \right),
    \qquad
    |\beta|\in\{1,2\},
    \quad x\neq y.
\end{equation}

\subsubsection{Proof of \cref{theorem-Solvable}}

Since $f\in L_0^\infty(\Omega)$, we know that
$f\in L_0^p(\Omega)$. Therefore, from
\cref{theorem:solvable-lp}, there exists
$u\in W^{2,p}(\Omega)$ for all $p\in(1,\infty)$ with
$\int_\Omega u(x)\dd x=0$ such that
\begin{equation}\label{eq:W2p-estimate}
    \|u\|_{W^{2,p}(\Omega)}
    \leq
    C_p\|f\|_{L^p(\Omega)}.
\end{equation}
Furthermore,
\cref{Proposition:green-function-characterization} yields
\[
    u(\cdot)
    =
    \int_\Omega f(y)G(\cdot,y)\dd y,
\]
where $G$ is the Green function associated with $\cL_A$.

To conclude the statement, we aim to show that
$u\in W^{2,BMO}(\Omega)$. In particular, we will prove that
\begin{equation}\label{eq:BMO}
    \sup_{x_0\in\Omega}
    \sup_{0<r\leq\operatorname{diam}(\Omega)}
    \min_{a\in\R}
    \frac{1}{r^d}
    \int_{\BB_r(x_0)\cap\Omega}
    |\partial_{ij}u(x)-a|\dd x
    \leq
    C\|f\|_{\infty,\Omega}.
\end{equation}
Fix $x_0\in\Omega$ and let
$\BB_r=\BB_r(x_0)$. Put
\[ 
    E:=\BB_{2r}(x_0)\cap\Omega,
    \qquad
    c_r:=\frac{1}{|\Omega|}\int_Ef,
\]
and write $f=f_1+f_2$, where
\[
    f_1:=f{\bf1}_E-c_r,
    \qquad
    f_2:=f-f_1.
\]
Then $f_1,f_2\in L_0^\infty(\Omega)$ and $f_2=c_r$ on $E$.
Writing
\[
    u
    =
    \cL_A^{-1}(f_1)+\cL_A^{-1}(f_2)
    =:u_1+u_2,
\]
we obtain
\begin{align*}
I(x_0,r)
&:=
\min_{a\in\R}
\frac{1}{r^d}
\int_{\BB_r\cap\Omega}
|\partial_{ij}u(x)-a|\dd x
\\
&\leq
\underbrace{
\min_{a\in\R}
\frac{1}{r^d}
\int_{\BB_r\cap\Omega}
|\partial_{ij}u_1(x)-a|\dd x
}_{=:I_1(x_0,r)}
\\
&\qquad+
\underbrace{
\min_{a\in\R}
\frac{1}{r^d}
\int_{\BB_r\cap\Omega}
|\partial_{ij}u_2(x)-a|\dd x
}_{=:I_2(x_0,r)}.
\end{align*}

On the one hand, applying
\cref{theorem:solvable-lp} to
$u_1=\cL_A^{-1}(f_1)$ with $p=2$, we get
\begin{align*}
I_1(x_0,r)
&\leq
\frac{1}{r^d}
|\BB_r\cap\Omega|^{1/2}
\left(
    \int_{\BB_r\cap\Omega}
    |\partial_{ij}u_1(x)|^2\dd x
\right)^{1/2}
\\
&\lesssim
r^{-d/2}\|f_1\|_{L^2(\Omega)}.
\end{align*}
Moreover,
\begin{align*}
\|f_1\|_{L^2(\Omega)}
&\leq
|E|^{1/2}\|f\|_{\infty,\Omega}
+
|c_r||\Omega|^{1/2}
\\
&\leq
|E|^{1/2}\|f\|_{\infty,\Omega}
+
\frac{|E|}{|\Omega|^{1/2}}
\|f\|_{\infty,\Omega}
\\
&\lesssim
r^{d/2}\|f\|_{\infty,\Omega}.
\end{align*}
Consequently,
\begin{equation}\label{eq:I1}
    I_1(x_0,r)
    \lesssim
    \|f\|_{\infty,\Omega}.
\end{equation}

On the other hand, since
$\int_\Omega G(x,y)\dd y=0$, for any
$x\in\BB_r\cap\Omega$,
\begin{align*}
u_2(x)
&=
\int_\Omega G(x,y)f_2(y)\dd y
\\
&=
\int_{E^c}G(x,y)f(y)\dd y
+
c_r\int_\Omega G(x,y)\dd y
\\
&=
\int_{\BB_{2r}^c\cap\Omega}
G(x,y)f(y)\dd y.
\end{align*}
If $x\in\BB_r(x_0)$ and
$y\in\BB_{2r}^c(x_0)$, then
$\|x-y\|\geq r$. Therefore,
\eqref{eq:green-kernel-size} gives, for fixed $r$,
\[
    |\partial_{ij,x}G(x,y)|
    \leq
    C(1+r^{-d}),
\]
which is integrable with respect to $y$ on the bounded domain
$\Omega$. Dominated convergence therefore justifies differentiating
twice under the integral, and
\begin{equation}\label{eq:derivative-u2}
    \partial_{ij}u_2(x)
    =
    \int_{\BB_{2r}^c\cap\Omega}
    \partial_{ij,x}G(x,y)f(y)\dd y.
\end{equation}

From~\cref{lemma:global-estimate-green-function}, for any
$x\in\BB_r\cap\Omega$ and
$y\in\BB_{2r}^c\cap\Omega$,
\begin{align*}
&|\partial_{ij,x}G(x,y)
-\partial_{ij,x}G(x_0,y)|
\\
&\qquad\lesssim
\frac{\|x-x_0\|}{\|x-y\|^{d+1}}
+
\frac{1}{\|x-y\|^{d-\alpha}}
+
\frac{1}{\|x_0-y\|^{d-\alpha}}.
\end{align*}
The last two terms are integrable with respect to $y$. Indeed,
letting $D=\operatorname{diam}(\Omega)$,
\begin{align*}
\sup_{x\in\Omega}
\int_\Omega
\frac{1}{\|x-y\|^{d-\alpha}}\dd y
&\leq
C_d\int_0^D s^{\alpha-1}\dd s
\\
&=
\frac{C_d}{\alpha}D^\alpha
<\infty.
\end{align*}
Denote this finite constant by $C_\Omega$. Taking
$a=\partial_{ij}u_2(x_0)$ and using
\eqref{eq:derivative-u2}, we obtain
\begin{align*}
I_2(x_0,r)
&\leq
\frac{1}{r^d}
\int_{\BB_r\cap\Omega}
|\partial_{ij}u_2(x)
-\partial_{ij}u_2(x_0)|
\dd x
\\
&\leq
\frac{\|f\|_{\infty,\Omega}}{r^d}
\int_{\BB_r\cap\Omega}
\int_{\BB_{2r}^c\cap\Omega}
|\partial_{ij,x}G(x,y)
-\partial_{ij,x}G(x_0,y)|
\dd y\dd x
\\
&\lesssim
\frac{\|f\|_{\infty,\Omega}}{r^d}
\int_{\BB_r\cap\Omega}
\left(
    2C_\Omega
    +
    \|x-x_0\|
    \int_{\BB_{2r}^c\cap\Omega}
    \frac{1}{\|x-y\|^{d+1}}\dd y
\right)
\dd x.
\end{align*}
For $x\in\BB_r(x_0)$,
\[
\begin{aligned}
\int_{\BB_{2r}^c\cap\Omega}
\frac{1}{\|x-y\|^{d+1}}\dd y
&\leq
\int_{\{\|x-y\|\geq r\}}
\frac{1}{\|x-y\|^{d+1}}\dd y
\\
&\lesssim
r^{-1}.
\end{aligned}
\]
Therefore,
\begin{equation}\label{eq:I2}
\begin{aligned}
I_2(x_0,r)
&\lesssim
\frac{\|f\|_{\infty,\Omega}}{r^d}
\int_{\BB_r\cap\Omega}
\left(
    1+\|x-x_0\|r^{-1}
\right)
\dd x
\\
&\lesssim
\|f\|_{\infty,\Omega}.
\end{aligned}
\end{equation}
Combining~\eqref{eq:I1} and~\eqref{eq:I2} gives
\[
    I(x_0,r)
    \lesssim
    \|f\|_{\infty,\Omega}.
\]
Taking the supremum over $x_0$ and
$0<r\leq\operatorname{diam}(\Omega)$ proves
\eqref{eq:BMO}.

Finally, since $\Omega$ is a bounded $\cC^{2,\alpha}$ domain, it is
a bounded Lipschitz domain and satisfies the uniform volume-density
property
\[
    c_\Omega r^d
    \leq
    |\BB_r(x_0)\cap\Omega|
    \leq
    C_dr^d,
    \qquad
    x_0\in\Omega,
    \quad
    0<r\leq\operatorname{diam}(\Omega).
\]
Thus~\eqref{eq:BMO} is equivalent, up to constants, to the
relative-ball BMO seminorm. Together with the $W^{2,2}$ estimate in
\eqref{eq:W2p-estimate}, this yields
\[
    \|[u]\|_{\mathcal X}
    \lesssim
    \|f\|_{\infty,\Omega},
\]
and concludes the proof of~\cref{theorem-Solvable}.
\section{Technical Lemmas}\label{appendix-tecnical-lemmas}

\begin{lemma}\label{lemma:BMO-inclusion}
Let $\Omega$ be bounded with $\cC^{2,\alpha}$ boundary, then 
\[
W^{2,BMO}(\Omega) \hookrightarrow \mathcal{C}^{1,{\rm LogLip}}(\Omega).
\]
\end{lemma}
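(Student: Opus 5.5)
To prove \cref{lemma:BMO-inclusion}, we will show that for $u\in W^{2,BMO}(\Omega)$ we have $\|u\|_{1,\mathrm{LogLip},\Omega}\lesssim\|u\|_{W^{2,BMO}(\Omega)}$. The plan is to reduce to the whole-space statement and then run a Campanato-type telescoping argument on the gradient. First, since $\Omega$ is a bounded $\cC^{2,\alpha}$ (hence Lipschitz, uniform) domain, we will use a Stein/Jones-type extension operator that is bounded simultaneously on $W^{2,2}$ and on the BMO seminorm of second derivatives. Concretely, we apply the reflection extension across boundary charts (as in \citet{Jones1980BMO}) to obtain $Eu$ with compact support, $\|Eu\|_{W^{2,2}(\R^d)}+\sum_{ij}\|\partial_{ij}Eu\|_{\mathrm{BMO}(\R^d)}\lesssim\|u\|_{W^{2,BMO}(\Omega)}$. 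Alternatively, we can avoid the extension by working directly with the relative balls $D_{x,r}$, whose volume is comparable to $r^d$ (the volume-density property already noted in the proof of \cref{theorem-Solvable}), together with a Poincaré inequality on $D_{x,r}$ with constant uniform in $r$. We would try this intrinsic route first, since it only needs the uniform Poincaré inequality on the Lipschitz sets $D_{x,r}$.

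The sup bounds come next. By the John--Nirenberg inequality, the BMO control of $D^2u$ together with the $L^2$ bound gives $D^2u\in L^p$ for every $p<\infty$, with norm $\lesssim\|u\|_{W^{2,BMO}}$. Morrey's inequality with some $p>d$ then yields $\|u\|_{\infty}+\|\nabla u\|_\infty\lesssim\|u\|_{W^{2,BMO}}$.

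The main step is the log-Lipschitz modulus. Fix $x,y\in\Omega$ with $t=\|x-y\|\le1$, and write $M_r(z)$ for the average of $D^2u$ over $D_{z,r}$.
\begin{enumerate}
\item \emph{Affine approximation.} By the Poincaré inequality, $\nabla u$ is close on $D_{z,r}$ to the affine map $\nabla u(z)+M_r(z)(\cdot-z)$. Iterating Campanato-style over dyadic radii $r_k=2^{-k}r$, with the BMO bound controlling $\fint|D^2u-M_{r_k}|$, we get the pointwise estimate $|\nabla u(w)-\nabla u(z)-M_r(z)(w-z)|\lesssim r\,\|D^2u\|_{\mathrm{BMO}}$ for $w\in D_{z,r}$. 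Here we use Lebesgue points of $\nabla u$, which is continuous.
\item \emph{Choice of radius.} Taking $r=2t$ and $w=y$ gives $\|\nabla u(x)-\nabla u(y)\|\lesssim t\,(|M_{2t}(x)|+\|D^2u\|_{\mathrm{BMO}})$.
\item \emph{Bounding the averages.} Compare successive averages: $|M_{2^{-k}}(x)-M_{2^{-k-1}}(x)|\lesssim\|D^2u\|_{\mathrm{BMO}}$, again by volume comparability. Summing from scale $1$ down to $2t$ gives $|M_{2t}(x)|\le|M_1(x)|+C\log_2(1/t)\|D^2u\|_{\mathrm{BMO}}$. Moreover $|M_1(x)|\lesssim\|D^2u\|_{L^2}$, since $|D_{x,1}|\gtrsim1$.
\end{enumerate}
Combining these gives $\|\nabla u(x)-\nabla u(y)\|\lesssim\|u\|_{W^{2,BMO}}\,t(1+|\log t|)=\|u\|_{W^{2,BMO}}\,\omega_{\rm LogLip}(t)$, which is the claimed embedding.

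The hardest part is the uniformity of the Poincaré and volume-comparison constants on the relative balls $D_{z,r}$ near $\partial\Omega$. This is where the $\cC^{2,\alpha}$ (in fact Lipschitz) boundary is used: flattening by a bi-Lipschitz chart maps $D_{z,r}$ to sets comparable to half-balls, uniformly in $r$.
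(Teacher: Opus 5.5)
Your argument is correct, but it takes a genuinely different route from the paper's.

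The paper splits by dimension. For $d=1$ it telescopes averages of $u''$ over nested intervals and integrates $u''$ from $x$ to $y$; this is essentially your Step~3, with Step~1 trivial in one dimension. For $d\geq2$ it uses only the exponential integrability of $D^2u$ given by John--Nirenberg, i.e.\ $\nabla u\in W^{1,A}$ with $A(t)=e^t-t-1$. It then reads off the modulus $\omega_{\rm LogLip}$ from Cianchi's sharp Orlicz--Sobolev embedding.

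You instead use the mean-oscillation structure directly. Your Campanato telescoping shows that $\nabla u$ has affine approximations with pointwise error $O(r)$ at scale $r$. This Zygmund-type estimate is valid; it needs telescoping at both $z$ and $w$ and the comparison $D_{w,r}\subset D_{z,2r}$. The whole logarithmic loss then comes from the growth of the averages $M_r$ in Step~3. Your route is elementary, uniform in $d$, and shows where the logarithm originates. The paper's proof is shorter because all domain geometry is hidden inside Cianchi's theorem and in the passage from John--Nirenberg on cubes to $\Omega$.

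The price of your route is that the geometry of the relative balls must be checked by hand. You need a Poincar\'e inequality on $D_{z,r}$ with constant $\lesssim r$, and an interior ball of radius $\gtrsim r$ inside $D_{z,r}$ so that affine functions are bounded pointwise by their averages. For a $\cC^{2,\alpha}$ domain these hold only for $r\leq r_0(\Omega)$. At scales comparable to the diameter a relative ball can be disconnected. For merely Lipschitz domains this can happen at every scale near the tip of a thin wedge removed from the domain, so your parenthetical ``(in fact Lipschitz)'' is too optimistic.

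You should therefore run Step~1 only for $t\leq r_0/4$, and treat $t>r_0/4$ with the sup bound $\|\nabla u\|_{\infty}\lesssim\|u\|_{W^{2,BMO}(\Omega)}$. Step~3 needs only volume doubling of relative balls, which holds at all scales $r\leq\diam(\Omega)$.

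Your extension alternative is not covered by \citet{Jones1980BMO}, which extends BMO functions rather than $u$ itself. It would require a higher-order reflection compatible with second derivatives, so the intrinsic route is the better choice.
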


\begin{proof}
Let $u\in W^{2,BMO}(\Omega)$. 
Assume first that $d=1$. Then $\Omega=(a,b)$. Put
$D:=\operatorname{diam}(\Omega)$ and $v:=u''$. For an interval
$I\subset\Omega$, denote by $v_I$ the average of $v$ on $I$. Choose
nested intervals
\[
I=I_0\subset I_1\subset\cdots\subset I_N=\Omega
\]
such that $|I_{k+1}|\leq2|I_k|$ and
\[
N\leq C\left(1+\log\left(\frac{D}{|I|}\right)\right).
\]
By the definition of the BMO norm,
\[
|v_{I_k}-v_{I_{k+1}}|
\leq
\frac{1}{|I_k|}
\int_{I_k}|v-v_{I_{k+1}}|
\leq
2\|v\|_{\mathrm{BMO}(\Omega)}.
\]
Consequently,
\begin{align*}
\int_I|v|
&\leq
\int_I|v-v_I|
+
|I|\,|v_I-v_\Omega|
+
|I|\,|v_\Omega|
\\
&\leq
C|I|
\left(1+\log\left(\frac{D}{|I|}\right)\right)
\|v\|_{\mathrm{BMO}(\Omega)}
+
\frac{|I|}{D}\|v\|_{L^1(\Omega)}
\\
&\leq
C|I|
\left(1+\log\left(\frac{D}{|I|}\right)\right)
\|u\|_{W^{2,BMO}(\Omega)}.
\end{align*}
Since $u'\in W^{1,2}(\Omega)$ has an absolutely continuous
representative, for $x<y$ with $|x-y|\leq1$, applying the previous
estimate to $I=(x,y)$ gives
\begin{align*}
|u'(y)-u'(x)|
&\leq
\int_x^y|u''(t)|\dd t
\\
&\leq
C\|u\|_{W^{2,BMO}(\Omega)}
|x-y|\bigl(1+|\log|x-y||\bigr).
\end{align*}
The remaining terms in the $\cC^{1,{\rm LogLip}}$ norm are controlled
by the one-dimensional Sobolev embedding and the $W^{2,2}$ part of
the $W^{2,BMO}$ norm. This proves the result when $d=1$.

Assume now that $d\geq2$. In view of John--Nirenberg inequality~\cite[Lemma~1]{JohnNirenberg1961}, there exist universal constants $c_1,c_2>0$ so that
\[
\Bigg|
\Bigg\{
x\in Q:
\left|
\partial_{ij}u(x)
-
\frac{1}{|Q|}\int_Q\partial_{ij}u
\right|
>\lambda
\Bigg\}
\Bigg|
\leq
c_1|Q|
\exp\left(
-\frac{c_2\lambda}
{\|u\|_{W^{2,BMO}(\Omega)}}
\right).
\]
Therefore, $\partial_i u\in W^{1,A}(\Omega)$ for
$A(t)=e^t-t-1$. Now, as $\Omega$ has $\cC^{2,\alpha}$ boundary,
\cite[Theorem~3]{cianchi1996continuity} implies
\[
|\partial_i u(x)-\partial_i u(x')|
\leq
C\|\partial_i u\|_{W^{1,A}(\Omega)}
H^{-1}(\|x-x'\|^{-d}),
\quad
\text{a.e. in }\Omega,
\]
where
\[
H(s)=\bigl(s\,\Theta^{-1}(s^{d'})\bigr)^{d'}
\quad {\rm and}\quad
\Theta(r)=d'\int_r^\infty
\frac{\widetilde A(t)}{t^{1+d'}}\dd t.
\]
Here, $d'$ denotes H\"older conjugate of $d$. Since
$A(t)=e^t-t-1$,
\[
\widetilde A(s)
=
\sup_{t\geq0}\{st-A(t)\}
=
(1+s)\log(1+s)-s,
\qquad s\geq0,
\]
and thus
\[
H^{-1}(\rho)
\leq
C\rho^{-1/d}(1+\log\rho),
\qquad
\rho\geq1,
\]
for some $C>0$. Consequently, for
$0<\|x-x'\|\leq1$,
\[
|\partial_i u(x)-\partial_i u(x')|
\leq
C\|u\|_{W^{2,BMO}(\Omega)}
\|x-x'\|
\bigl(1+|\log\|x-x'\||\bigr).
\]
Summing over $i=1,\dots,d$, we obtain
\[
\|\nabla u(x)-\nabla u(x')\|
\leq
C\|u\|_{W^{2,BMO}(\Omega)}
\|x-x'\|
\bigl(1+|\log\|x-x'\||\bigr),
\quad
\text{a.e. in }\Omega.
\]
The same Orlicz--Sobolev embedding controls
$\|\nabla u\|_{\infty,\Omega}$, while the lower-order terms are
controlled by the $W^{2,2}$ part of the norm. Moreover, the preceding
estimate provides a continuous representative of $\nabla u$. 
Our desired statement follows.
\end{proof}

Let us also record the following compactness fact, which shows that
weak-$\ast$ convergence of measures can be lifted to strong
convergence in the dual of the mean-zero Neumann space defined in \eqref{eq:zero-traze-space}.

\begin{lemma}\label{lemma:strong-convergence} 
Fix $p\in(1,\infty)$ with $p>\frac d2$. Let
$(\mu_n)_{n\in\N}\subset\cM_0(\Omega)$ be such that
$\mu_n\overset{\ast}{\rightharpoonup}\mu\in\cM_0(\Omega)$. Then
\[
\mu_n\longrightarrow\mu
\qquad\text{strongly in }E_p^*,
\]
and
\[
(\cL_A^*)^{-1}(\mu_n)
\longrightarrow
(\cL_A^*)^{-1}(\mu)
\qquad\text{strongly in }L_0^{p'}(\Omega).
\]
\end{lemma}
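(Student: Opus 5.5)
The plan is to prove both claims at once by factoring through a compact operator, so that weak-$\ast$ convergence is upgraded to norm convergence. The key fact, recorded in \eqref{eq:contentionW-C-embedding}, is that for $p>d/2$ the inclusion $\iota:E_p\hookrightarrow\mathcal C(\overline\Omega)$ is compact. Its adjoint is therefore compact as well. We view this adjoint as the restriction map
\[
\iota^*:\mathcal M_0(\Omega)\to E_p^*,\qquad \langle\iota^*\gamma,u\rangle=\int u\,\dd\gamma .
\]
Since mean-zero measures annihilate constants, it acts on the quotient $\mathcal C(\overline\Omega)/\langle1\rangle$, whose dual is $\mathcal M_0(\Omega)$. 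The first claim reduces to the following statement: a compact operator maps weak-$\ast$ convergent sequences to norm-convergent sequences.

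First, the sequence is bounded. By the uniform boundedness principle, $\sup_n\|\mu_n\|_{\mathcal M}<\infty$, because $\mathcal C(\overline\Omega)$ is a Banach space and $\mu_n(f)$ converges for every $f$.

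Second, the sequence has a unique possible limit. The set $K:=\overline{\iota(\{u\in E_p:\|u\|_{E_p}\le1\})}$ is compact in $\mathcal C(\overline\Omega)$. Then
\[
\|\mu_n-\mu\|_{E_p^*}=\sup_{u\in\iota(B_{E_p})}|(\mu_n-\mu)(u)|\le\sup_{f\in K}|(\mu_n-\mu)(f)|.
\]
The family $\{\mu_n-\mu\}$ is uniformly bounded in total variation. Hence the maps $f\mapsto(\mu_n-\mu)(f)$ are equi-Lipschitz on $\mathcal C(\overline\Omega)$, and they converge to $0$ pointwise. An $\varepsilon$-net argument on the compact set $K$ then upgrades pointwise convergence to uniform convergence on $K$. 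Concretely, cover $K$ by finitely many balls of radius $\varepsilon/(2\sup_n\|\mu_n-\mu\|)$ and use pointwise convergence at the centers. This gives $\|\mu_n-\mu\|_{E_p^*}\to0$.

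Third, the second claim follows from boundedness. Recall that $(\mathcal L_A^*)^{-1}:E_p^*\to L_0^{p'}(\Omega)$ is bounded, being the inverse of the bounded invertible adjoint built from \cref{theorem:solvable-lp}. Therefore
\[
\|(\mathcal L_A^*)^{-1}\mu_n-(\mathcal L_A^*)^{-1}\mu\|_{L^{p'}}\le C\|\mu_n-\mu\|_{E_p^*}\to0 .
\]

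The only delicate point is the identification in the first claim: we must check that $\mu_n$ acts on $E_p$ through its continuous representative. This is justified because $p>d/2$ gives, via Morrey's inequality, a continuous representative of each $u\in E_p$. We must also check that weak-$\ast$ convergence is understood against $\mathcal C(\overline\Omega)$, or equivalently modulo constants since all masses are zero. I expect no further obstacle; the boundedness step is where completeness of $\mathcal C(\overline\Omega)$ is used.
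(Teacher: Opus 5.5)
Your proposal is correct and follows essentially the same route as the paper. The paper proves the abstract fact that, for Banach spaces with $E\Subset F$, weak-$\ast$ convergence in $F^*$ implies norm convergence in $E^*$: it uses uniform boundedness together with a finite $\varepsilon$-net of the unit ball of $E$ in the $F$-norm. It applies this with $E=E_p$ and $F=\mathcal C(\overline\Omega)$, and then obtains the second claim from the boundedness of $(\mathcal L_A^*)^{-1}:E_p^*\to L_0^{p'}(\Omega)$. Your $\varepsilon$-net on the compact closure $K$ and your final boundedness step are the same argument, so the framing through compactness of the adjoint $\iota^*$ is only cosmetic.
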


\begin{proof}
We shall prove a more general result: let $E,F$ be two Banach spaces
such that
$E\Subset F$. If
$x_n\overset{\ast}{\rightharpoonup}x$ in
$\sigma(F^*,F)$, the weak-$\ast$ topology on $F^*$, then
$x_n\to x$ strongly in $E^*$.

Fix $\vae>0$. As $E\Subset F$, the unit ball of $E$ is relatively
compact in $F$. Hence, there exist
$\{g_i\}_{i=1}^N\subset E$, with $\|g_i\|_E\leq1$, such that for every
$g\in E$ with $\|g\|_E\leq1$, there is an $i\in\{1,\dots,N\}$ for
which
\[
\|g-g_i\|_F\leq\vae.
\]
Therefore,
\[
\begin{aligned}
\|x_n-x\|_{E^*}
&=
\sup_{\|g\|_E\leq1}
|\langle x_n-x,g\rangle|
\\
&\leq
\max_{1\leq i\leq N}
|\langle x_n-x,g_i\rangle|
+
\vae\|x_n-x\|_{F^*}.
\end{aligned}
\] 
In view of
$x_n\overset{\ast}{\rightharpoonup}x$ in $\sigma(F^*,F)$,
\[
\sup_{n\in\N}\|x_n-x\|_{F^*}<\infty,
\]
and, since the collection $\{g_i\}_{i=1}^N$ is finite,
\[
\max_{1\leq i\leq N}
|\langle x_n-x,g_i\rangle|
\longrightarrow0.
\]
Thus,
\[
\limsup_{n\to\infty}\|x_n-x\|_{E^*}
\leq
\vae\sup_{n\in\N}\|x_n-x\|_{F^*}.
\] 
As $\vae>0$ is arbitrary, we conclude that
$x_n\to x$ strongly in $E^*$. 
We apply this result with
\[
E=E_p,
\qquad
F=\cC(\overline\Omega).
\]
By Morrey's inequality and the Rellich compactness theorem, as
recorded in~\eqref{eq:contentionW-C-embedding},
\[
E_p\Subset\cC(\overline\Omega).
\]
The weak-$\ast$ convergence of $\mu_n$ to $\mu$ therefore implies
\[
\|\mu_n-\mu\|_{E_p^*}\longrightarrow0,
\]
which proves the first claim. Finally, since
\[
(\cL_A^*)^{-1}:E_p^*\longrightarrow L_0^{p'}(\Omega)
\]
is bounded,
\begin{align*}
\|(\cL_A^*)^{-1}(\mu_n)
-(\cL_A^*)^{-1}(\mu)\|_{L^{p'}(\Omega)}
&\leq
C\|\mu_n-\mu\|_{E_p^*}
\longrightarrow0.
\end{align*}
This proves the second claim. 
\end{proof}
\section{Omitted proofs}\label{sect:omitted-proofs}

\begin{proof}[Proof of \cref{Lemma:Control-of-entropy}] We start with an auxiliary lemma. 
\begin{lemma}[Complexity of $C^1$ functions with log-Lip gradients in the
$L^\infty$ metric]
\label{lemma:covering-LogLip}
Let $\Omega\subset\mathbb R^d$ be bounded with
$\mathcal C^{2,\alpha}$ boundary. Then, for every $M>0$ and every
$0<\eps<M/2$,
\[
\log N\left(
\eps,
\cC_M^{1,\mathrm{LogLip}}(\Omega),
\|\cdot\|_{\infty,\Omega}
\right)
\lesssim
\left(\frac{M}{\eps}\right)^{d/2}
\left(\log(M/\eps)\right)^{(d+2)/2},
\]
where
\[
\cC_M^{1,\mathrm{LogLip}}(\Omega)
=
\left\{
f\in\cC^{1,\mathrm{LogLip}}(\Omega):
\|f\|_{1,\mathrm{LogLip},\Omega}\leq M
\right\}.
\]
The implicit constant depends only on $d$ and $\Omega$.
\end{lemma}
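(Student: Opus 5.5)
Our plan is the classical Kolmogorov--Tikhomirov route for H\"older-type classes: approximate each $f\in\cC_M^{1,\mathrm{LogLip}}(\Omega)$ on a grid by first-order Taylor polynomials, then count the admissible discretized Taylor data. By scaling ($f\mapsto f/M$) we may assume $M=1$ and $\eps<1/2$. Choose a mesh size $h\in(0,1)$ with
\[
h^2\bigl(1+|\log h|\bigr)\asymp \eps .
\]
This balances the Taylor remainder. Indeed, for $\|x-z\|\le h$ and $f\in\cC_1^{1,\mathrm{LogLip}}$, the mean-value theorem gives
\[
|f(x)-f(z)-\langle\nabla f(z),x-z\rangle|\le h\,\omega_{\rm LogLip}(h).
\]
Solving the balance equation yields $h\asymp \eps^{1/2}(\log(1/\eps))^{-1/2}$. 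Hence the number of grid points is
\[
K\asymp |\Omega| h^{-d}\asymp \eps^{-d/2}(\log(1/\eps))^{d/2}.
\]
We place the grid points $z_1,\dots,z_K$ in a neighbourhood of $\Omega$. Since $\Omega$ has $\cC^{2,\alpha}$ boundary, it is uniformly Lipschitz. So every $x\in\Omega$ is joined to a grid point at distance $\lesssim h$ by a path inside $\Omega$ of comparable length, and the Taylor bound holds up to constants. Alternatively, we first extend $f$ to $\R^d$ with comparable norm, via Fefferman's extension as used in the proof of \cref{Prop:bound-in-terms-calpha}, and then work on a cube.

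The cover is built by quantization. Round each value $f(z_k)$ to a multiple of $\eps$ and each gradient component $\partial_i f(z_k)$ to a multiple of $\eps/h$. The resulting piecewise-affine reconstruction (affine in each grid cell from the nearest node) is within $C\eps$ of $f$ in sup norm. The naive count $(1/\eps)^{K(d+1)}$ would give $\log N\lesssim K\log(1/\eps)$. That is $\eps^{-d/2}(\log(1/\eps))^{(d+2)/2}$, which is exactly the claimed bound. So no chaining of neighbouring values is strictly needed. Each node contributes $d+1$ quantized numbers, each ranging over at most $O(1/\eps)$ possible values, because $|f|,|\nabla f|\le 1$ and the gradient step is $\eps/h\ge\eps$. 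Therefore
\[
\log N(C\eps)\le K(d+1)\log(C/\eps)\lesssim \eps^{-d/2}(\log(1/\eps))^{d/2}\log(1/\eps),
\]
and rescaling $\eps$ by $C$ finishes the proof.

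The main obstacle is the domain geometry near $\partial\Omega$. The Taylor estimate needs the segment $[z,x]$, or a short path, to lie where the log-Lipschitz bound on $\nabla f$ holds, and the seminorm is defined only for pairs in $\Omega$. We will handle this first with the extension argument, using Fefferman's linear extension for $\cC^{1,\omega}$ classes with $\omega=\omega_{\rm LogLip}$. That reduces everything to a cube containing $\Omega$ and makes the constants depend only on $d$ and $\Omega$. A secondary point is that centres of the cover need not lie in the class. This only affects constants, since covering numbers with arbitrary centres control those with centres in the set at radius $2\eps$.
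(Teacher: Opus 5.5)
Your proposal is correct and follows essentially the same argument as the paper: extend via Fefferman's $\cC^{1,\omega}$ extension to a convex set, take a net of mesh $h\asymp\sqrt{\eps/\log(1/\eps)}$, quantize $f$ and $\nabla f$ at the nodes with steps $\eps$ and $\eps/h$, reconstruct by nearest-node affine pieces, count $K\log(C/\eps)$ per the coefficient vectors, and pass from external to internal centres. The only differences are cosmetic: the paper works on a ball $\BB_R$ with a Voronoi partition of a $\delta$-net, whereas you use a cube with a regular grid.
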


\begin{proof}[Proof of \Cref{lemma:covering-LogLip}]
Choose $R>0$ such that $\overline\Omega\subset\mathbb B_R$. By the
$C^{1,\omega}$ Whitney extension theorem
\citep[generalized sharp Whitney theorem, pp.~320--321]{Fefferman2009Extension},
applied with
\[
\omega(t)=t\bigl(1+|\log t|\bigr),
\qquad 0<t\leq1,
\]
every $f\in\mathcal C^{1,\mathrm{LogLip}}(\Omega)$ admits an extension
$\widetilde f\in\mathcal C^{1,\mathrm{LogLip}}(\mathbb B_R)$ satisfying
\[
\widetilde f=f\quad\text{on }\Omega,
\qquad
\|\widetilde f\|_{1,\mathrm{LogLip},\mathbb B_R}
\leq
C_\Omega
\|f\|_{1,\mathrm{LogLip},\Omega}.
\]
Consequently,
\[
N\left(
\eps,
\cC_M^{1,\mathrm{LogLip}}(\Omega),
\|\cdot\|_{\infty,\Omega}
\right)
\leq
N\left(
\eps,
\cC_{C_\Omega M}^{1,\mathrm{LogLip}}(\BB_R),
\|\cdot\|_{\infty,\BB_R}
\right).
\]
It therefore suffices to prove the claim on $\BB_R$. Without loss of generality, take $M=1$. It is enough to consider
$\eps>0$ sufficiently small. Let $\{x_k\}_{k=1}^K$ be a finite
$\delta$-net of $\BB_R$, with
\[
K\leq C_R\delta^{-d},
\]
and let $\mathcal Q=\{Q_k\}_{k=1}^K$ be the associated Voronoi
partition, assigning boundary points arbitrarily in case of ties.
Then, for every $x\in Q_k$,
\[
\|x-x_k\|\leq\delta.
\]
Moreover, since $\BB_R$ is convex, the segment joining $x_k$ and $x$
is contained in $\BB_R$.

Fix $\eps,\eta>0$. For any
$f\in\cC^{1,\mathrm{LogLip}}(\BB_R)$, define
\[
\widetilde f(x)
:=
\sum_{k=1}^K
\left(
\eps
\left\lfloor\frac{f(x_k)}{\eps}\right\rfloor
+
\eta
\left\langle
\left\lfloor\frac{\nabla f(x_k)}{\eta}\right\rfloor,
x-x_k
\right\rangle
\right)
1_{\{x\in Q_k\}},
\]
where the floor operation in
$\lfloor\nabla f(x_k)/\eta\rfloor$ is understood coordinatewise.
We regard the resulting piecewise-affine functions as centers of an
external cover; in particular, they need not belong to
$\cC^{1,\mathrm{LogLip}}(\BB_R)$.

The fact that $f$ has a bounded log-Lipschitz gradient means that
\[
f(x)
=
f(y)+\langle\nabla f(y),x-y\rangle+R_y(x),
\]
where, if $\|x-y\|\leq1$,
\begin{align*}
|R_y(x)|
&=
\left|
\int_0^1
\left\langle
\nabla f\bigl(y+t(x-y)\bigr)-\nabla f(y),
x-y
\right\rangle
\diff t
\right|
\\
&\leq
\|x-y\|^2
\int_0^1
t\left(1+\left|\log(t\|x-y\|)\right|\right)\diff t
\\
&\leq
\frac12
\|x-y\|^2
\left(\frac32-\log\|x-y\|\right).
\end{align*}

We now evaluate the approximation error. For every $x\in Q_k$,
\begin{align*}
|f(x)-\widetilde f(x)|
&=
\Bigg|
f(x_k)
+
\langle\nabla f(x_k),x-x_k\rangle
+
R_{x_k}(x)
\\
&\qquad
-
\eps
\left\lfloor\eps^{-1}f(x_k)\right\rfloor
-
\eta
\left\langle
\left\lfloor\eta^{-1}\nabla f(x_k)\right\rfloor,
x-x_k
\right\rangle
\Bigg|
\\
&\leq
\eps+\sqrt d\,\eta\delta
+
\frac12\delta^2
\left(\frac32-\log\delta\right).
\end{align*}
To choose $\delta$ such that
$\delta^2\log(1/\delta)\lesssim\eps$, take
\[
\delta
=
\sqrt{
\frac{2\eps}
{\log\bigl(1/(2\eps)\bigr)}
}.
\]
Indeed,
\[
\begin{aligned}
\delta^2\log(1/\delta)
&=
\frac{2\eps}{\log\bigl(1/(2\eps)\bigr)}
\frac12
\log\left(
\frac{\log\bigl(1/(2\eps)\bigr)}{2\eps}
\right)
\\
&=
\eps
+
\eps
\frac{
\log\log\bigl((2\eps)^{-1}\bigr)
}{
\log\bigl((2\eps)^{-1}\bigr)
}
\leq
2\eps
\end{aligned}
\]
for $\eps$ sufficiently small. Fixing
$\eta=\frac{\eps}{\sqrt d\,\delta},$
there exists a constant $C>0$, depending only on $d$ and $R$, such
that $\|f-\widetilde f\|_{\infty,\BB_R}
\leq C\eps.$ 

Each $\widetilde f$ can be encoded by a finite number of coefficients.
At each point $x_k$, the quantized value of $f(x_k)$ has
$O(\eps^{-1})$ possible values, while each quantized
partial-derivative coefficient has $O(\eta^{-1})$ possible values.
Thus, the number of possible coefficient vectors at each point is
bounded by $C\eps^{-1}\eta^{-d}.$ Consequently, the total number of possible centers is at most $\left(C\eps^{-1}\eta^{-d}\right)^K.$ 
Denoting the external covering number by $N_{\rm ext}$, we obtain
\begin{align*}
\log N_{\rm ext}
\left(
C\eps,
\cC^{1,\mathrm{LogLip}}_1(\BB_R),
\|\cdot\|_{\infty,\BB_R}
\right)
&\leq
K\log\left(C\eps^{-1}\eta^{-d}\right)
\\
&\lesssim
\left(
\frac{\log(1/(2\eps))}{\eps}
\right)^{d/2}
\log\left(C\eps^{-1}\eta^{-d}\right)
\\
&\lesssim
\left(\frac1\eps\right)^{d/2}
\left(\log(1/\eps)\right)^{(d+2)/2}.
\end{align*}
Since every nonempty ball of an external cover can be recentered at
an element of the class at the cost of multiplying its radius by
two, the same estimate holds for the usual covering number.
Replacing $\eps$ by a fixed multiple of $\eps$ does not affect the
claimed order.

The result for general $M$ follows by applying the estimate to
$f/M$. Finally, applying the estimate on $\BB_R$ with
$C_\Omega M$ in place of $M$ and restricting the covering functions
to $\Omega$ gives
\[
\begin{aligned}
\log N\left(
\eps,
\cC_M^{1,\mathrm{LogLip}}(\Omega),
\|\cdot\|_{\infty,\Omega}
\right)
&\lesssim
\left(\frac{C_\Omega M}{\eps}\right)^{d/2}
\left(
\log\frac{C_\Omega M}{\eps}
\right)^{(d+2)/2}
\\
&\lesssim
\left(\frac{M}{\eps}\right)^{d/2}
\left(\log(M/\eps)\right)^{(d+2)/2},
\end{aligned}
\]
where the last implicit constant depends only on $d$ and $\Omega$.
\end{proof}

With this lemma at hand, we can turn to the proof of \cref{Lemma:Control-of-entropy}.  First, by definition 
\[
\E[|\nu - \widehat{\nu}_n| _{1,\mathrm{LogLip}}']
=
\E \sup_{\|f\|_{1,\mathrm{LogLip},{\BB_R}}\leq 1}
|(\nu - \widehat{\nu}_n)(f)|.
\] 
Notice that the class
$\cC_1^{1,\mathrm{LogLip}}(\BB_R)$ has envelope bounded by $1$.
Moreover, for every $f,g\in\cC_1^{1,\mathrm{LogLip}}(\BB_R)$,
\[
\left(
\frac1n\sum_{i=1}^n|f(X_i)-g(X_i)|^2
\right)^{1/2}
\leq \|f-g\|_{\infty,\BB_R}.
\]
Therefore, a combination of Proposition~4.11 in
\citet{wainwright2019high} with Theorem~16 in
\citet{luxburg2004distance} yields
\[
\E \left[
\sup_{\|f\|_{1,\mathrm{LogLip},\BB_R}\leq 1}
|(\nu-\widehat{\nu}_n)(f)|
\right]
\leq
C\inf_{0<\delta\leq1}
\left\{
\delta+
\frac1{\sqrt n}
\int_{\delta/4}^1
\sqrt{
\log N\left(
\eps,
\cC_1^{1,\mathrm{LogLip}}(\BB_R),
\|\cdot\|_\infty
\right)}
\diff\eps
\right\}.
\] 
By \Cref{lemma:covering-LogLip}, up to an additive constant, the entropy
integral is upper bounded by
\[
\int_{\delta/4}^1
\left(\frac1{\eps}\right)^{d/4}
\left(\log(1/\eps)\right)^{(d+2)/4}
\diff\eps.
\] 

Let us start with the case $d<4$. The integral is upper bounded, up to a multiplicative constant, by
\[
\left(\frac{4}{4-d}\right)^{\frac{d+6}{4}}
\gamma\left(
\frac{d+6}{4},
\frac{4-d}{4}\log\left(\frac4\delta\right)
\right),
\]
where $\gamma$ is the lower incomplete Gamma function. 
This quantity remains bounded as $\delta\downarrow0$. Thus, taking
$\delta=n^{-1}$, we obtain
\[
\E[|\nu-\widehat{\nu}_n|_{1,\mathrm{LogLip}}']
\lesssim n^{-1/2},
\]
yielding the first part of the claim on the rates. 

Turning to $d=4$, the entropy integral in that case reads $C\log^{5/2}(4/\delta),$ 
and the claim follows by taking $\delta=n^{-1/2}$.

For $d>4$, as $\max_{\eps\in[\delta/4,1]}\log(1/\eps)
=
\log(4/\delta),$ 
we have
\begin{align*}
&\int_{\delta/4}^1
\left(\frac1{\eps}\right)^{d/4}
\left(\log(1/\eps)\right)^{(d+2)/4}
\diff\eps
\\
&\qquad\leq
\left(\log(4/\delta)\right)^{\frac{d+2}{4}}
\frac{4}{d-4}
\left\{
\left(\frac{\delta}{4}\right)^{1-d/4}-1
\right\}\lesssim
\delta^{1-d/4}
\left(\log(4/\delta)\right)^{\frac{d+2}{4}}.
\end{align*}
Consequently, Dudley's bound gives
\[
\E[|\nu-\widehat{\nu}_n|_{1,\mathrm{LogLip}}']
\lesssim
\inf_{0<\delta\leq1}
\left\{
\delta+
n^{-1/2}\delta^{1-d/4}
\left(\log(4/\delta)\right)^{\frac{d+2}{4}}
\right\}.
\]
For $n$ large enough, take
\[
\delta=\delta_n
:=
n^{-2/d}(\log n)^{(d+2)/d}.
\]
Since $\log(4/\delta_n)\asymp\log n$, we have
\[
n^{-1/2}\delta_n^{1-d/4}
\left(\log(4/\delta_n)\right)^{\frac{d+2}{4}}
\lesssim
n^{-2/d}(\log n)^{(d+2)/d}
=
\delta_n.
\]
We conclude that
\[
\E[|\nu-\widehat{\nu}_n|_{1,\mathrm{LogLip}}']
\lesssim
n^{-2/d}(\log n)^{(d+2)/d},
\]
which proves the result for $d>4$. Increasing the implicit constant
takes care of the finitely many remaining values of $n$. 
\end{proof}
\begin{proof}[Proof of \Cref{lem:empirical-sobolev-clt}]
Set $\gamma_p=1-\frac dp>0$ 
and let $\mathcal F_p$ denote the mean-zero representatives of the unit
ball of $W^{2,p}(D)/\langle1\rangle$. By Poincar\'e's inequality and
Morrey's inequality,
\[
\mathcal F_p
\subset
\left\{
f\in \cC^{1,\gamma_p}(D):
\|f\|_{1,\gamma_p;D}\leq C
\right\}.
\]
Consequently, the covering entropy of $\mathcal F_p$ in the uniform
metric satisfies (see \cite[Theorem~2.7.1]{van1996weak})
\[
\log N\bigl(\varepsilon,\mathcal F_p,\|\cdot\|_\infty\bigr)
\lesssim
\varepsilon^{-d/(1+\gamma_p)},
\qquad 0<\varepsilon<1.
\]
If \(f_1,\ldots,f_N\) form an \(\varepsilon\)-cover in the
uniform norm, then
\[
[f_k-\varepsilon,f_k+\varepsilon],
\qquad k=1,\ldots,N,
\]
form \(L^2(\nu)\)-brackets of width \(2\varepsilon\).
Consequently,
\[
N_{[]}(2\varepsilon,\mathcal F_p,L^2(\nu))
\leq
N(\varepsilon,\mathcal F_p,\|\cdot\|_\infty).
\] Therefore,
\[
\log N_{[]}\bigl(\varepsilon,\mathcal F_p,L^2(\nu)\bigr)
\lesssim
\varepsilon^{-d/(1+\gamma_p)}.
\]
It follows that
\[
\int_0^1
\sqrt{
\log N_{[]}\bigl(
\varepsilon,\mathcal F_p,L^2(\nu)
\bigr)}
\,\rd\varepsilon
<\infty
\]
whenever $\frac{d}{1+\gamma_p}<2,$ 
or, equivalently, $p>\frac{2d}{4-d}.$ 
Moreover, $\mathcal F_p$ is pointwise measurable, since
$W^{2,p}(D)$ is separable and its inclusion into $\cC(D)$ is
continuous. Hence, by the central limit theorem for classes with
finite bracketing entropy
\citep[Theorem~2.5.2]{van1996weak},
\[
\sqrt n(\nu-\widehat\nu_n)
\rightsquigarrow
\mathbb G_\nu
\qquad\text{in }\ell^\infty(\mathcal F_p).
\]
The canonical map
\[
(W^{2,p}(D)/\langle1\rangle)^*\longrightarrow\ell^\infty(\mathcal F_p),
\qquad
\gamma\longmapsto\bigl(f\mapsto\gamma(f)\bigr),
\]
is an isometric embedding with closed range. Since
$\sqrt n(\nu-\widehat\nu_n)$ belongs to this closed subspace for every
$n$, its weak limit also belongs to it almost surely. Thus
$\mathbb G_\nu$ is a tight Gaussian random element of $(W^{2,p}(D)/\langle1\rangle)^*$. Its covariance
is the covariance of the empirical process, namely
\[
\E[\mathbb G_\nu(f)\mathbb G_\nu(g)]
=
\operatorname{Cov}_\nu(f,g).
\] 
\end{proof}
\begin{proof}[Proof of \cref{lem:local-affine-lower}]
Let $X,Y$ be independent and uniform on $K$, put $M=(X+Y)/2$,
and set $h=f-\ell$.  Strong convexity gives
\[
\frac{\kappa}{8}\|X-Y\|^2
\leq\frac{h(X)+h(Y)}2-h(M).
\]
The density of $M$ is bounded by $2^d/|K|$.  Consequently,
\[
\frac{\kappa}{8}\E\|X-Y\|^2
\leq\frac{1+2^d}{|K|}\int_C|h(x)|\,\rd x.
\]
If $\bar x=\E X$, then
$\E\|X-Y\|^2=2\E\|X-\bar x\|^2$.  By the rearrangement principle,
the ball minimizes the second moment among sets of fixed volume, so
\[
\E\|X-\bar x\|^2
\geq\frac{d}{d+2}\omega_d^{-2/d}|K|^{2/d}.
\]
Combining the displays proves the claim.
\end{proof}
\begin{proof}[Proof of \cref{prop:polyhedral-lower}]
For fixed $c$, the active cells of the affine pieces of $g+c$ form,
up to null boundaries, a partition of $\Omega$ into at most $n$
convex sets $C_j$.  Applying
\cref{lem:local-affine-lower} on every cell and then Jensen's
inequality gives
\[
\|g-f-c\|_{L^1(\mu)}
\geq\lambda c_d\kappa\sum_j|C_j|^{1+2/d}
\geq\lambda c_d\kappa|\Omega|^{1+2/d}n^{-2/d}.
\]
The bound is uniform in $c$.
\end{proof}
\begin{proof}[Proof of \cref{lem:bootstrap-W2}]
Conditional on the sample
$X_{1:n}=(X_1,\ldots,X_n)$, let
$\pi_n\in\Pi(\widehat\nu_n,\nu)$ be a measurable
choice of an optimal coupling, and let
$(X_i^*,Y_i)_{i=1}^n$ be conditionally i.i.d.~with common distribution
$\pi_n$. 
Define
\[
\widetilde\nu_n
=
\frac1n\sum_{i=1}^n\delta_{Y_i}.
\]
Conditional on $X_{1:n}$, the variables $X_1^*,\dots,X_n^*$ are
i.i.d.~with distribution $\widehat\nu_n$, whereas
$Y_1,\dots,Y_n$ are i.i.d.~with distribution $\nu$. Thus
$\widehat\nu_n^*=n^{-1}\sum_{i=1}^n\delta_{X_i^*}$ is the bootstrap
empirical measure, and the conditional distribution of
$\widetilde\nu_n$ is the same as the unconditional distribution of
$\widehat\nu_n$.

Given a random variable $Z$, write
$\E^*[Z]=\E[Z\mid X_{1:n}]$. Since
\[
\frac1n\sum_{i=1}^n\delta_{(X_i^*,Y_i)}
\]
is a coupling between $\widehat\nu_n^*$ and $\widetilde\nu_n$, and
in view of the optimality of $\pi_n$, we have
\[
\E^*\!\left[
\mathcal W_2^2(\widehat\nu_n^*,\widetilde\nu_n)
\right]
\leq
\E^*\!\left[
\frac1n\sum_{i=1}^n\|X_i^*-Y_i\|^2
\right]
=
\mathcal W_2^2(\widehat\nu_n,\nu).
\]
Moreover,
\[
\E^*\!\left[
\mathcal W_2^2(\widetilde\nu_n,\nu)
\right]
=
\E\!\left[
\mathcal W_2^2(\widehat\nu_n,\nu)
\right].
\]
Therefore,
\[
\begin{aligned}
\E^*\!\left[
\mathcal W_2^2(\widehat\nu_n^*,\nu)
\right]
&\leq
2\left(
\E^*\!\left[
\mathcal W_2^2(\widehat\nu_n^*,\widetilde\nu_n)
\right]
+
\E^*\!\left[
\mathcal W_2^2(\widetilde\nu_n,\nu)
\right]
\right)
\\
&\leq
2\mathcal W_2^2(\widehat\nu_n,\nu)
+
2\E\!\left[
\mathcal W_2^2(\widehat\nu_n,\nu)
\right].
\end{aligned}
\]
Taking expectations yields
\[
\E\!\left[
\mathcal W_2^2(\widehat\nu_n^*,\nu)
\right]
\leq
4\E\!\left[
\mathcal W_2^2(\widehat\nu_n,\nu)
\right].
\]
The second inequality in the statement follows from
\eqref{eq:W2}.
\end{proof}
\begin{proof}[Proof of \cref{Thm:rates-and-CLT-bootstrap}]
Set
\[
\gamma_p=1-\frac dp
\qquad\text{and}\qquad
\mathcal K
=
(\mathcal L_A^{-1})^*
\circ
\mathcal T_{\nabla\varphi}.
\]
By \cref{lem:empirical-sobolev-clt}, the unit ball of
$W^{2,p}(\Omega')/\langle1\rangle$ is a $\nu$-Donsker class.
Consequently, the bootstrap central limit theorem for Donsker
classes \citep[Section~3.6]{van1996weak} yields
$\mathbb G_n^*
\rightsquigarrow_{\mathbb P}
\mathbb G_\nu$ in
$\bigl(W^{2,p}(\Omega')/\langle1\rangle\bigr)^*$. Note that
$\mathbb G_\nu$ is centered Gaussian and hence symmetric. Therefore,
$-\mathbb G_n^*
\rightsquigarrow_{\mathbb P}
\mathbb G_\nu$.
Since $\mathcal K$ is bounded, the bootstrap continuous mapping theorem
\citep[Theorem~10.8]{Kosorok.2008.book} gives
\[
-\mathcal K(\mathbb G_n^*)
\rightsquigarrow_{\mathbb P}
\mathcal K(\mathbb G_\nu)
\qquad\text{in}\qquad
L^{p'}(\Omega)/\langle1\rangle.
\]
It remains to control the nonlinear remainder. Fix
$g\in L^p_0(\Omega)$ and set $u_g=\mathcal L_A^{-1}g$.
Applying the linearization estimate used in the proof of
\cref{Thm:rates-and-CLT} (see
\cref{Prop:bound-in-terms-calpha}) first to
$(\widehat\varphi_{n,*},\widehat\nu_n^*)$ and then to
$(\widehat\varphi_n,\widehat\nu_n)$, write
\[
\begin{aligned}
R_{n,*}(g)
&:=
(\widehat\nu_n^*-\nu)
\bigl(u_g\circ\nabla\varphi^*\bigr)
+
\int_\Omega
g(\widehat\varphi_{n,*}-\varphi)\,\rd x,
\\
R_n(g)
&:=
(\widehat\nu_n-\nu)
\bigl(u_g\circ\nabla\varphi^*\bigr)
+
\int_\Omega
g(\widehat\varphi_n-\varphi)\,\rd x.
\end{aligned}
\]
The two linearization estimates give
\[
\begin{aligned}
|R_{n,*}(g)|
&\lesssim
\|g\|_{L^p(\Omega)}
\mathcal W_2(\widehat\nu_n^*,\nu)^{1+\gamma_p},
\\
|R_n(g)|
&\lesssim
\|g\|_{L^p(\Omega)}
\mathcal W_2(\widehat\nu_n,\nu)^{1+\gamma_p}.
\end{aligned}
\]
Subtracting the two identities defining $R_{n,*}(g)$ and $R_n(g)$,
and then applying the triangle inequality, gives 
\[
\begin{aligned}
&
\left|
(\widehat\nu_n^*-\widehat\nu_n)
\bigl(u_g\circ\nabla\varphi^*\bigr)
+
\int_\Omega
g(\widehat\varphi_{n,*}-\widehat\varphi_n)\,\rd x
\right|
\\
&\qquad\lesssim
\|g\|_{L^p(\Omega)}
\left\{
\mathcal W_2(\widehat\nu_n^*,\nu)^{1+\gamma_p}
+
\mathcal W_2(\widehat\nu_n,\nu)^{1+\gamma_p}
\right\}.
\end{aligned}
\]
Here we used
\[
\int_\Omega
\left\langle
\mu[\nabla^2\varphi]^{-1}\nabla u_g,
\nabla(\widehat\varphi_{n,*}-\widehat\varphi_n)
\right\rangle
\rd x
=
-
\int_\Omega
g(\widehat\varphi_{n,*}-\widehat\varphi_n)\,\rd x,
\]
which follows by integration by parts and the homogeneous conormal
boundary condition satisfied by $u_g$. Taking the supremum over
$\{g\in L^p_0(\Omega):\|g\|_{L^p(\Omega)}\leq1\}$ yields
\[
\begin{aligned}
&
\left\|
\sqrt n
(\widehat\varphi_{n,*}-\widehat\varphi_n)
+
\mathcal K(\mathbb G_n^*)
\right\|_{L^{p'}(\Omega)/\langle1\rangle}
\\
&\qquad\lesssim
\sqrt n
\left\{
\mathcal W_2(\widehat\nu_n^*,\nu)^{2-d/p}
+
\mathcal W_2(\widehat\nu_n,\nu)^{2-d/p}
\right\}
=:\Delta_n^*.
\end{aligned}
\]
By Jensen's inequality,
\cref{lem:bootstrap-W2}, and \eqref{eq:W2},
\[
\begin{aligned}
\sqrt n\,
\E\!\left[
\mathcal W_2(\widehat\nu_n^*,\nu)^{2-d/p}
\right]
&\leq
\sqrt n
\left\{
\E\!\left[
\mathcal W_2^2(\widehat\nu_n^*,\nu)
\right]
\right\}^{1-\frac{d}{2p}}
\\
&\lesssim
\sqrt n\,\alpha(n,d)^{1-\frac{d}{2p}}
\longrightarrow0.
\end{aligned}
\]
The same argument gives
\[
\sqrt n\,
\E\!\left[
\mathcal W_2(\widehat\nu_n,\nu)^{2-\frac{d}{p}}
\right]
\longrightarrow0.
\]
Indeed,
$\sqrt n\,\alpha(n,d)^{1-\frac{d}{2p}}\to0$
for $d=1,2,3$ precisely under the condition
$p>\max\left\{d,\frac{2d}{4-d}\right\}$.
The above bounds imply $\E[\Delta_n^*]\to0$. Consequently,
$\E^*[\Delta_n^*]\overset{\mathbb P}{\longrightarrow}0$,
and conditional Markov's inequality gives, for every
$\varepsilon>0$,
\[
\mathbb P\!\left(
\Delta_n^*>\varepsilon
\,\middle|\,
X_{1:n}
\right)
\overset{\mathbb P}{\longrightarrow}0.
\]
Thus
\[
\sqrt n
(\widehat\varphi_{n,*}-\widehat\varphi_n)
=
-\mathcal K(\mathbb G_n^*)
+
o_{\mathbb P^*}(1)
\]
in $L^{p'}(\Omega)/\langle1\rangle$, in probability with respect to
the original sample. The conclusion follows from the conditional
Slutsky theorem.
\end{proof}
\begin{proof}[Proof of \Cref{prop:shadow-premium}]
By Kantorovich duality,
\[
\rho_{\mu_t}(\nu)
=
\inf_{\psi\ {\rm convex}}
\left[
\int\psi\,\rd\mu+\int\psi^*\,\rd\nu
+t\int\psi w_{A,B}\,\rd\mu
\right].
\]
The conclusion follows from the envelope theorem.  More explicitly,
the upper and lower directional bounds follow by inserting,
respectively, the optimizer at $t=0$ and the optimizer at $t$ into
the two dual objectives, and then using uniqueness modulo constants
and stability of normalized potentials.  The normalization is
immaterial because $\int w_{A,B}\,\rd\mu=0$.
\end{proof}
\end{document}